\newcommand{\N}{{\mathbb{N}}}
\newcommand{\C}{{\mathbb{C}}}
\newcommand{\R}{{\mathbb{R}}}
\let\Re=\undefined\DeclareMathOperator*{\Re}{Re}
\DeclareMathOperator*{\supp}{supp}  
\DeclareMathOperator{\osc}{osc}
\newcommand{\eps}{{\varepsilon}}
\def\rootsixfifths{\sqrt{\smash{\frac65}\vrule depth 0.5ex width 0mm height 2ex}}
\newcommand{\gnj}{g_n^j}
\newcommand{\tnj}{t_n^j}
\newcommand{\wnJ}{w_n^J}
\newcommand{\vnj}{v_n^j}
\theoremstyle{plain}
\newtheorem{theorem}{Theorem}
\newtheorem{proposition}[theorem]{Proposition}
\newtheorem{lemma}[theorem]{Lemma}
\newtheorem{conjecture}[theorem]{Conjecture}
\theoremstyle{definition}
\newtheorem{definition}[theorem]{Definition}
\newtheorem{remark}[theorem]{Remark}
\newcounter{smalllist}
\newenvironment{CI}{\begin{list}{{\ $\bullet$\ }}{%
\setlength{\topsep}{0mm}\setlength{\parsep}{0mm}\setlength{\itemsep}{0mm}%
\setlength{\labelwidth}{0mm}\setlength{\itemindent}{0mm}\setlength{\leftmargin}{0mm}%
\setlength{\labelsep}{0mm}}}{\end{list}}
\numberwithin{equation}{section} \numberwithin{theorem}{section} \voffset=-0.5in \setlength{\textheight}{8.6in} \hoffset=-0.2in
\begin{document}

\title[On the mass-critical gKdV equation]{On the mass-critical generalized KdV equation}
\author{Rowan Killip}
\address{University of California, Los Angeles}
\author{Soonsik Kwon}
\address{Princeton University, Princeton, NJ}
\author{Shuanglin Shao}
\address{Institute for Advanced Study, Princeton, NJ}
\author{Monica Visan}
\address{University of California, Los Angeles}
\subjclass[2000]{35Q55}

\begin{abstract}
We consider the mass-critical generalized Korteweg--de Vries equation
$$(\partial_t + \partial_{xxx})u=\pm \partial_x(u^5)$$
for real-valued functions $u(t,x)$. We prove that if the global well-posedness and scattering conjecture for this equation failed,
then, conditional on a positive answer to the global well-posedness and scattering conjecture for the mass-critical nonlinear
Schr\"odinger equation $(-i\partial_t + \partial_{xx})u=\pm (|u|^4u)$, there exists a minimal-mass blowup solution to the
mass-critical generalized KdV equation which is almost periodic modulo the symmetries of the equation.  Moreover, we can
guarantee that this minimal-mass blowup solution is either a self-similar solution, a soliton-like solution, or a double high-to-low
frequency cascade solution.
\end{abstract}

\maketitle

\section{Introduction}
We consider the initial-value problem for the mass-critical generalized Korteweg--de Vries (gKdV) equation
\begin{gather}
(\partial_t+\partial_{xxx})u =\mu \partial_x\bigl(u^5\bigr) \label{eq:gKdV} \\
u(0,x)=u_0(x)\in L_x^2(\R), \notag
\end{gather}
where $\mu=\pm 1$ and the solution $u$ is a real-valued function of spacetime $(t,x)\in \R\times\R$.  When $\mu=1$ the equation is called
\emph{defocusing}, while the case $\mu=-1$ is known as \emph{focusing}.

The name \emph{mass-critical} refers to the fact that the scaling symmetry
\begin{equation}\label{scaling}
u(t,x) \mapsto  u^\lambda(t,x):= \lambda^{\frac12} u( \lambda^3 t, \lambda x)
\end{equation}
leaves both the equation \eqref{eq:gKdV} and the mass invariant.  The mass of a solution is defined by
\begin{equation}\label{mass}
M(u(t)) := \int_{\R} |u(t,x)|^2\, dx
\end{equation}
and is conserved under the flow.

Let us start by making the notion of a solution more precise.

\begin{definition}[Solution]\label{D:solution}
A function $u: I \times \R \to \R$ on a non-empty time interval $0\in I \subset \R$ is a \emph{(strong) solution} to \eqref{eq:gKdV} if it lies
in the class $C_t^0L^2_{x}(K\times\R)\cap L^5_x L_t^{10}(K\times \R)$ for all compact $K \subset I$, and obeys the Duhamel formula
\begin{equation}\label{eq:Duhamel}
u(t)=e^{-t\partial_x^3}u_0 + \mu \int_{0}^t e^{-(t-s)\partial_x^3}\partial_x\bigl(u^5(s)\bigr)\,ds
\end{equation}
for all $t\in I$.  We refer to the interval $I$ as the \emph{lifespan} of $u$. We say that $u$ is a \emph{maximal-lifespan
solution} if the solution cannot be extended to any strictly larger interval. We say that $u$ is a \emph{global solution} if $I
= \R$.
\end{definition}

Throughout this paper we will use the following notation:
$$
S_I(u):= \int_{\R} \Bigl(\int_I |u(t,x)|^{10} \,dt\Bigr)^{1/2}\, dx=\|u\|_{L_x^5L_t^{10}(I\times\R)}^5.
$$
In view of Theorem~\ref{T:local} below, we will also refer to $S_I(u)$ as the \emph{scattering size} of $u$ on the interval $I$.

Associated to the notion of solution is a corresponding notion of blowup, which we now define.  As we will see in Theorem~\ref{T:local}, this
precisely corresponds to the impossibility of continuing the solution (in the case of blowup in finite time) or failure to scatter
(in the case of blowup in infinite time).

\begin{definition}[Blowup]\label{D:blowup}
We say that a solution $u$ to \eqref{eq:gKdV} \emph{blows up forward in time} if there exists a time $t_1 \in I$ such that
$$ S_{[t_1, \sup I)}(u) = \infty$$
and that $u$ \emph{blows up backward in time} if there exists a time $t_1 \in I$ such that
$$ S_{(\inf I, t_1]}(u) = \infty.$$
\end{definition}

The local well-posedness theory for \eqref{eq:gKdV} with finite-mass initial data was developed by Kenig, Ponce, and Vega,
\cite{KPV}.  They constructed local-in-time solutions for arbitrary initial data in $L_x^2$; however, as is the case
with critical equations, the interval of time for which existence was proved depends on the profile of the initial data rather than on its
norm. Moreover, they constructed global-in-time solutions for small initial data in $L_x^2$ and showed that these solutions
scatter, that is, they are well approximated by solutions to the free Airy equation
$$
(\partial_t +\partial_{xxx} )u=0
$$
asymptotically in the future and in the past.  We summarize these results in the following theorem.

\begin{theorem}[Local well-posedness, \cite{KPV}]\label{T:local}
Given $u_0 \in L^2_x(\R)$ and $t_0 \in \R$, there exists a unique maximal-lifespan solution $u$ to \eqref{eq:gKdV} with
$u(t_0)=u_0$. We will write $I$ for the maximal lifespan.  This solution also has the following properties:
\begin{CI}
\item {\rm(}Local existence{\rm)} $I$ is an open neighbourhood of $t_0$.
\item {\rm(}Blowup criterion{\rm)} If $\sup I$ is finite, then $u$ blows up forward in time; if $\inf I$ is finite,
then $u$ blows up backward in time.
\item {\rm(}Scattering{\rm)} If $\sup I=+\infty$ and $u$ does not blow up forward in time, then $u$ scatters forward in time, that is,
there exists a unique $u_+ \in L^2_x(\R)$ such that
\begin{align}\label{like u+}
\lim_{t \to +\infty} \| u(t)-e^{-t\partial_x^3} u_+ \|_{L^2_x(\R)} = 0.
\end{align}
Conversely, given $u_+ \in L^2_x(\R)$ there is a unique solution to \eqref{eq:gKdV} in a neighbourhood of infinity so that
\eqref{like u+} holds.
\item {\rm(}Small data global existence{\rm)} If $M(u_0)$ is sufficiently small, then $u$ is a global solution
which does not blow up either forward or backward in time.  Indeed, in this case
$$S_\R(u)\lesssim M(u)^{5/2}.$$
\end{CI}
\end{theorem}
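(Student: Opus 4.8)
\emph{Proof proposal.} The plan is to run the standard contraction-mapping proof of critical local well-posedness, built on the Airy Strichartz and local smoothing estimates of Kenig, Ponce, and Vega \cite{KPV}. The linear ingredients are the energy identity $\|e^{-t\partial_x^3}f\|_{L_t^\infty L_x^2}=\|f\|_{L_x^2}$, the sharp Kato smoothing estimate $\|\partial_x e^{-t\partial_x^3}f\|_{L_x^\infty L_t^2}\lesssim\|f\|_{L_x^2}$, and the Strichartz estimate $\|e^{-t\partial_x^3}f\|_{L_x^5 L_t^{10}}\lesssim\|f\|_{L_x^2}$; note that $L_x^5 L_t^{10}$ is precisely the mixed norm left invariant by \eqref{scaling}, which is why it is the natural space in which to measure solutions. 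Dualizing the smoothing estimate and combining it with the energy identity and with Strichartz, and then invoking the Christ--Kiselev lemma --- legitimate because the relevant time exponents, $2$ on the input side and $\infty$ or $10$ on the output side, are strictly ordered --- yields the retarded estimates $\bigl\|\int_0^t e^{-(t-s)\partial_x^3}\partial_x F(s)\,ds\bigr\|_{C_t^0 L_x^2\cap L_x^5 L_t^{10}(I\times\R)}\lesssim\|F\|_{L_x^1 L_t^2(I\times\R)}$.

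The one nonlinear estimate that makes the scheme close is the identity $\|u^5\|_{L_x^1 L_t^2}=\|u\|_{L_x^5 L_t^{10}}^5=S_I(u)$: for each fixed $x$ one has $\|u^5(x,\cdot)\|_{L_t^2}=\|u(x,\cdot)\|_{L_t^{10}}^5$, and integrating in $x$ gives the claim, with the polarized version $\|u^5-v^5\|_{L_x^1 L_t^2}\lesssim(\|u\|_{L_x^5 L_t^{10}}^4+\|v\|_{L_x^5 L_t^{10}}^4)\|u-v\|_{L_x^5 L_t^{10}}$ following by the same H\"older bookkeeping. Feeding this into the retarded estimates, the Duhamel map $u\mapsto e^{-t\partial_x^3}u_0+\mu\int_0^t e^{-(t-s)\partial_x^3}\partial_x(u^5)\,ds$ is a contraction on the ball $\{u:\|u\|_{L_x^5 L_t^{10}(I\times\R)}\le 2\eta,\ \|u\|_{C_t^0 L_x^2}\le 2\|u_0\|_{L_x^2}\}$ as soon as $\eta$ is a small absolute constant and $\|e^{-t\partial_x^3}u_0\|_{L_x^5 L_t^{10}(I\times\R)}\le\eta$; for arbitrary $u_0\in L_x^2$ the latter holds on a short enough interval $I\ni t_0$ by dominated convergence, giving local existence (and, restarting the solution at interior times, openness of the maximal lifespan), while for $M(u_0)$ small it already holds with $I=\R$ by Strichartz, producing a global solution with $\|u\|_{L_x^5 L_t^{10}(\R)}\lesssim\|u_0\|_{L_x^2}$ and hence $S_\R(u)=\|u\|_{L_x^5 L_t^{10}(\R)}^5\lesssim M(u)^{5/2}$. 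Uniqueness in the full class $C_t^0 L_x^2\cap L_x^5 L_t^{10}$ follows by partitioning any common interval of existence into finitely many subintervals on which the scattering size is at most $\eta^5$ --- possible since $t\mapsto S_{[a,t]}(u)$ is continuous and non-decreasing --- and applying the contraction estimate on each in turn; the union of all intervals on which a solution with data $u(t_0)=u_0$ exists then carries a well-defined maximal-lifespan solution.

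For the blowup criterion, if $\sup I=:T^\ast<\infty$ while $S_{[t_1,T^\ast)}(u)<\infty$ for some $t_1\in I$, the retarded estimate applied on $[t_1,T^\ast)$ shows $u\in C_t^0 L_x^2([t_1,T^\ast)\times\R)$ with a finite norm, so $u(t)$ has a limit in $L_x^2$ as $t\uparrow T^\ast$; solving \eqref{eq:gKdV} with this limit as data at $T^\ast$ and gluing contradicts maximality, so $u$ must blow up forward in time, and symmetrically backward. For scattering, when $\sup I=+\infty$ and $S_{[t_0,\infty)}(u)<\infty$, the retarded estimate on tails $[t,\infty)$ shows that $\int_0^t e^{s\partial_x^3}\partial_x(u^5(s))\,ds$ is Cauchy in $L_x^2$ as $t\to\infty$, so $u_+:=u_0+\mu\int_0^\infty e^{s\partial_x^3}\partial_x(u^5(s))\,ds\in L_x^2$ exists and \eqref{like u+} holds; conversely, given $u_+$, running the contraction for the Duhamel equation anchored at $+\infty$ on $[T,\infty)$ with $T$ large (where $\|e^{-t\partial_x^3}u_+\|_{L_x^5 L_t^{10}([T,\infty)\times\R)}\le\eta$ by Strichartz and dominated convergence) produces the unique solution near $+\infty$ satisfying \eqref{like u+}.

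The one genuinely gKdV-specific difficulty, by contrast with the mass-critical NLS analogue, is the derivative $\partial_x$ landing on the nonlinearity: no Strichartz estimate can absorb it, so one must route the Duhamel term through the Kato smoothing estimate and the Christ--Kiselev lemma, and the real work is checking that the three mixed norms in play --- $L_x^\infty L_t^2$ for smoothing, $L_x^1 L_t^2$ for the nonlinearity, and $L_x^5 L_t^{10}$ for Strichartz --- fit together in the right order; everything else is the routine critical-equation machinery.
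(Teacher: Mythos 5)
The paper does not prove Theorem~\ref{T:local}; it cites it directly from Kenig--Ponce--Vega \cite{KPV}. Your reconstruction is correct and follows the same overall contraction-mapping strategy as that reference, built on exactly the right combination of estimates (energy, Kato smoothing in $L_x^\infty L_t^2$, and the $L_x^5 L_t^{10}$ Strichartz bound), with the retarded/inhomogeneous estimate $\|\int_0^t e^{-(t-s)\partial_x^3}\partial_x F\,ds\|_{L_x^5L_t^{10}\cap C_t^0L_x^2}\lesssim\|F\|_{L_x^1L_t^2}$ playing the decisive role; the identity $\|u^5\|_{L_x^1L_t^2}=\|u\|_{L_x^5L_t^{10}}^5$ is precisely the mechanism closing the iteration. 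The one genuine stylistic difference is that you route the retarded estimate through the Christ--Kiselev lemma, whereas \cite{KPV} (which predates that lemma) established the inhomogeneous bounds by direct arguments. Your appeal to Christ--Kiselev is legitimate --- the time exponents $2<10$ and $2<\infty$ give the required strict ordering, and the lemma is known to extend to $L_x^rL_t^q$-type mixed norms --- but it is worth flagging that this is a mild modernization rather than what appears in the cited source; the rest (blowup criterion via completing the orbit in $L_x^2$, scattering via Cauchy tails of the Duhamel integral and the wave operator contraction anchored at infinity, uniqueness by subdivision into small scattering-size intervals, small-data global bound) matches the standard account.
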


Global well-posedness for large finite-mass initial data is an open question.  In the case of more regular initial data, for
example, $u_0\in H^s_x(\R)$ with $s\geq 1$, one may access higher regularity conservation laws to answer the global
well-posedness question. One such conserved quantity is the energy,
\begin{equation}\label{energy}
E(u(t)) := \int_{\R} \tfrac12|\partial_x u(t,x)|^2 + \tfrac{\mu}6 |u(t,x)|^6\, dx.
\end{equation}
Invoking the conservation of energy, in the defocusing case one may iterate the local well-posedness theory to obtain a global
solution for initial data $u_0\in H^s_x(\R)$ with $s\geq 1$, \cite{KPV}.  In the focusing case, the same argument combined with
the sharp Gagliardo--Nirenberg inequality, \cite{Weinstein}, yields global well-posedness for finite-energy initial data with
mass less than that of the ground state soliton, which we will discuss in a moment.  In neither case does the argument yield information
about the long-time behaviour of the solution.

The ground state is the unique positive radial solution to the elliptic equation
$$
\partial_{xx}Q + Q^5 =Q
$$
and is given by the explicit formula
\begin{equation}\label{eq:groud-state}
Q(x)=\frac {3^{1/4}}{\cosh^{1/2}(2x)}.
\end{equation}
The ground state plays an important role in the study of the focusing case ($\mu=-1$) of \eqref{eq:gKdV}, as it gives rise to \emph{soliton} solutions.
More precisely,
$$
u(x,t):= c^{\frac 14}Q\bigl(\sqrt{c}(x-ct)\bigr) \quad \text{for} \quad c>0
$$
is a solution to \eqref{eq:gKdV}.  Furthermore, it is known that when $M(u_0)>M(Q)$, solutions can blow up in finite time, \cite{Merle:gKdV}, even for
$H^1_x$ initial data.

There has been some work dedicated to lowering the regularity of the initial data for which one has global well-posedness. In
\cite{FLP}, Fonseca, Linares, and Ponce established global well-posedness for solutions of the focusing mass-critical gKdV for
initial data in $H^s(\R)$ with $s>3/4$ and mass less than that of the ground state solution.  Recently, Farah, \cite{Farah},
used the I-method of Colliander, Keel, Staffilani, Takaoka, and Tao, \cite{CKSTT:almost}, to further lower the regularity of the
initial data to $s>3/5$.  In view of the fact that it is both scaling-critical
and conserved by the flow, it is natural to endeavour to prove well-posedness for initial data in $L_x^2$, that is, when $s=0$.

Another interesting open question is related to the asymptotic behavior of global solutions to \eqref{eq:gKdV}.  Intuitively, if
we knew that $u(t)$ decayed to zero (in some sense) as $t\to \pm \infty$, then the nonlinearity $\partial_x(u^5(t))$
should decay even faster and so the nonlinear effects should become negligible for large times.  As a result, it is widely
expected that the nonlinear solution scatters to a linear solution, at least in the defocusing case; in the focusing case, the
same behavior is expected for initial data with mass less than that of the ground state.  More precisely, it is expected that
there exist $u_\pm\in L_x^2(\R)$ such that
\begin{equation}\label{eq:asymptotic completeness}
\bigl\|u(t) -e^{-t\partial_x^3}u_{\pm}\bigr\|_{L_x^2} \to 0 \quad \text{as} \quad t \to \pm\infty.
\end{equation}

For critical problems it is natural to encapsulate both the well-posedness and scattering questions in the form of global
spacetime bounds; the precise formulation is Conjecture~\ref{Conj:gKdV} below.  Indeed, the existence of a scaling symmetry
implies that there is no reference scale for time or space and hence, one should regard `good' (i.e., profile-independent) local
well-posedness and scattering as two facets of the same question.  In addition to addressing global well-posedness and scattering,
spacetime bounds imply a strong form of stability for the equation; see Theorem~\ref{T:stab}.

\begin{conjecture}[Spacetime bounds for the mass-critical gKdV]\label{Conj:gKdV}
\hspace*{1em}The defocusing mass-critical gKdV is globally well-posed for arbitrary initial data $u_0\in L^2_x(\R)$.  In the focusing case,
the same conclusion holds for initial data $u_0\in L^2_x(\R)$ with $M(u_0)<M(Q)$.  Furthermore, in both cases, the global
solution satisfies the following spacetime bounds:
\begin{align}\label{S bound}
\|u\|_{L_x^5L_t^{10}(\R\times\R)}\leq C(M(u_0)).
\end{align}
\end{conjecture}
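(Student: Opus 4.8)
The plan is to follow the concentration-compactness-and-rigidity strategy of Kenig and Merle, adapted to the weak dispersion and the absence of Galilean invariance of the Airy flow, reducing the high-frequency part of the analysis to the assumed spacetime bounds for the mass-critical NLS. Suppose the conclusion fails (in the focusing case we restrict throughout to masses below $M(Q)$). By the small-data statement in Theorem~\ref{T:local}, the function
\begin{equation*}
  L(m) := \sup\bigl\{\, S_I(u) : M(u) = m,\ u \text{ a maximal-lifespan solution of } \eqref{eq:gKdV} \,\bigr\}
\end{equation*}
is finite for small $m$, while by hypothesis $L(m)=\infty$ for some $m$; hence there is a critical mass $m_0\in(0,\infty)$ with $L(m)<\infty$ for $m<m_0$ and $L(m)=\infty$ for $m>m_0$. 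The first goal is to show that $m_0$ is attained by a \emph{minimal-mass blowup solution}: a solution $u$ with $M(u)=m_0$ that blows up forward in time and whose orbit $\bigl\{ N(t)^{-1/2} u\bigl(t, N(t)^{-1}(\cdot)+x(t)\bigr) \bigr\}$ is precompact in $L^2_x$ for suitable $N(t)>0$ and $x(t)\in\R$. Such a solution is extracted from an almost-optimizing sequence of solutions using a linear profile decomposition for $e^{-t\partial_x^3}$ in $L^2_x$ together with the nonlinear stability theory (Theorem~\ref{T:stab}) in a Palais--Smale-type argument: the stability theory forces all but one profile to carry negligible mass, and minimality of $m_0$ forces the surviving profile to generate the claimed blowup solution.

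The profile decomposition is where the NLS hypothesis enters. Bubbles of concentration for $e^{-t\partial_x^3}$ are of two kinds: those whose frequency support remains $O(1)$, which after symmetries evolve by the gKdV flow itself; and those whose frequency support escapes to $\pm\infty$. For the latter, localizing the Airy symbol near a large frequency $\xi_n$ and expanding $\xi^3 = \xi_n^3 + 3\xi_n^2(\xi-\xi_n) + 3\xi_n(\xi-\xi_n)^2 + (\xi-\xi_n)^3$, the constant and linear terms are removed by modulation and a translation, the cubic term is negligible after rescaling, and the quadratic term $3\xi_n(\xi-\xi_n)^2$ is, up to rescaling, a Schr\"odinger symbol. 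Hence such profiles are well approximated by solutions of the mass-critical NLS $(-i\partial_t+\partial_{xx})u = \pm(|u|^4u)$, and the assumed spacetime bounds transfer, via a refined frequency-localized Strichartz inequality for $e^{-t\partial_x^3}$ and interpolation against the anisotropic $L^5_xL^{10}_t$ norm, into $L^5_xL^{10}_t$ bounds for these Airy profiles; this is what makes the Palais--Smale argument close.

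With the minimal blowup solution $u$ in hand, a further compactness argument — partitioning the maximal lifespan into characteristic subintervals and controlling the oscillation of $N(t)$ — permits the normalization $N(t)\lesssim 1$, after which the asymptotics of $(N(t),x(t))$ split the possibilities into three: a \emph{self-similar} solution ($N(t)=t^{-1/3}$, lifespan $(0,\infty)$); a \emph{soliton-like} solution ($N(t)\equiv 1$, lifespan $\R$); or a \emph{double high-to-low frequency cascade} ($N(t)\to 0$ along sequences running to both ends of $\R$, lifespan $\R$). Each is then precluded. The self-similar and cascade solutions gain regularity: iterating Strichartz and local-smoothing estimates against the frequency-localized almost-periodicity yields $u(t)\in H^s_x$ for every $s\ge 0$ and smallness of the low-frequency tails $\bigl\| |\partial_x|^{-s} u(t)\bigr\|_{L^2_x}$ for $s>0$; combined with conservation of mass this forces $M(u)=0$ in the cascade case, a contradiction, and a companion argument using the scaling law $N(t)=t^{-1/3}$, the additional regularity, and conservation of mass as $t\to 0^+$ rules out the self-similar solution. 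The soliton-like solution is excluded by a monotonicity formula: a truncated virial / interaction-Morawetz identity for \eqref{eq:gKdV}, whose boundary and error terms are controlled because $N(t)\equiv1$ and $x(t)$ stays in a bounded window, forces $S_\R(u)<\infty$, again a contradiction. Thus $L(m_0)<\infty$, contradicting the definition of $m_0$; no critical mass exists and \eqref{S bound} holds.

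The main obstacle is the high-frequency part of the profile decomposition and, with it, the reliance on the NLS hypothesis. Since the Airy equation has no Galilean symmetry, the change of variables converting a high-frequency Airy bubble into an NLS bubble is only approximate, and its errors must be shown to vanish in the relevant Strichartz topology; moreover the natural gKdV space $L^5_xL^{10}_t$ is anisotropic in $x$ versus $t$, unlike the isotropic $L^6_{t,x}$ that is natural for one-dimensional NLS, so transferring NLS spacetime bounds requires a delicate bilinear or frequency-localized Strichartz estimate for $e^{-t\partial_x^3}$ together with a careful interpolation. A secondary obstacle is the soliton-like case: at $L^2_x$ regularity the virial weight $\int x\,u(t,x)^2\,dx$ need not be finite for gKdV, so the monotonicity argument must be run with truncated weights and the resulting errors absorbed using almost-periodicity — technically the most demanding estimate in the rigidity step.
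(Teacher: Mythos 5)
The statement you set out to prove is labeled a \emph{conjecture} in the paper, and the paper does not prove it. What the paper establishes are the conditional reductions of Theorems~\ref{T:reduct} and~\ref{T:enemies}: \emph{if} Conjecture~\ref{Conj:gKdV} fails and Conjecture~\ref{Conj:NLS} holds, then there exists a minimal-mass, almost periodic blowup solution realizing one of the three scenarios. Your proposal reproduces that reduction faithfully --- the critical mass $M_c$, the Airy linear profile decomposition with its two species of bubbles, the approximation of the highly oscillatory bubbles by mass-critical NLS evolutions (Theorem~\ref{T:NLS emb}), the stability theory, and the trichotomy of enemies --- but it then asserts a rigidity step precluding all three enemies. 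That rigidity step is exactly the open part of the problem, and the arguments you offer for it do not hold up.

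Concretely: (i) the claimed regularity gain for the self-similar and cascade solutions (``iterating Strichartz and local-smoothing estimates \dots yields $u(t)\in H^s_x$ for every $s\ge 0$'') is not established for gKdV; the analogous statements for the mass-critical NLS rely on the Galilei symmetry to normalize the frequency center together with a double-Duhamel argument, and no substitute is provided here. (ii) Even granting extra regularity, ``conservation of mass forces $M(u)=0$'' in the cascade case is not an argument: the mass is conserved and equal to $M_c>0$ by construction. The standard route is to show that the \emph{energy} vanishes in the limit $N(t)\to 0$ and then, in the focusing case, to invoke the sharp Gagliardo--Nirenberg inequality; none of this is set up, and it presupposes the unproven regularity of (i). (iii) You propose to exclude the soliton-like solution by a ``truncated virial / interaction-Morawetz identity,'' but no such monotonicity formula is known to close for the mass-critical gKdV at $L^2_x$ regularity, particularly in the focusing case, and the paper makes no claim to one. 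In short, your outline agrees with the paper through the end of Section~\ref{S:enemies}; everything after ``Each is then precluded'' is unsupported, and it is precisely there that the conjecture remains open.
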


Conjecture~\ref{Conj:gKdV} has been compared in the literature to the analogous conjecture for the mass-critical nonlinear
Schr\"odinger equation in one space dimension. This is
\begin{equation}\label{eq:NLS}
\begin{cases}
-iv_t+\partial_{xx} v=\frac 5{24} \mu |v|^4v\\
v(0,x)=v_0(x)\in L_x^2(\R),
\end{cases}
\end{equation}
where $\mu=\pm 1$ and the solution $v$ is a complex-valued function of spacetime $\R\times\R$.  Just as for the mass-critical
gKdV, the case $\mu=1$ is called defocusing, while the case $\mu=-1$ is known as focusing.  The numerical constant $\frac5{24}$
can be changed to any other positive value by rescaling $v$.  However, as will be discussed below, this specific value is convenient
for exhibiting a close connection between \eqref{eq:gKdV} and \eqref{eq:NLS}.  More precisely, it was observed in \cite{CCT, Tao:2remarks}
that for highly oscillatory initial data, solutions of gKdV mimic those of NLS.

Note also that \eqref{eq:NLS} is time-reversed relative to most work on this equation; positive frequencies move to the left.

Mass and energy as defined by \eqref{mass} and \eqref{energy} are also conserved quantities for \eqref{eq:NLS}.  Moreover,
\eqref{eq:NLS} enjoys a scaling symmetry
\begin{equation}\label{scaling-NLS}
v(t,x) \mapsto  v^\lambda(t,x):= \lambda^{\frac12} v( \lambda^2 t, \lambda x),
\end{equation}
which leaves both the equation \eqref{eq:NLS} and the mass invariant.

In the focusing case, \eqref{eq:NLS} admits soliton solutions.  More precisely, for $\mu=-1$,
\begin{equation}\label{eq:re-scaled-soliton}
v(t,x):= e^{-i \frac 5{24}t}Q\bigl(\sqrt{\tfrac 5{24}} \, x\bigr)
\end{equation}
is a solution to \eqref{eq:NLS}, where $Q$ is as defined in \eqref{eq:groud-state}.  Note that $M(v)=2\rootsixfifths M(Q)$.

The local theory for \eqref{eq:NLS} was developed by Cazenave and Weissler, \cite{cw0, cwI}, who constructed local-in-time
solutions for arbitrary initial data in $L_x^2$ (with the time of existence depending on the profile of the initial data)
and global-in-time solutions for small initial data in $L_x^2$.

For finite-energy initial data, the usual iterative argument yields global existence in the defocusing case.  In the focusing case,
global existence also follows from the same argument  combined with the sharp Gagliardo--Nirenberg inequality for finite-energy
initial data with $M(v_0)<2\rootsixfifths M(Q)$; see \cite{Weinstein}.  For global existence results for less regular data, but
still above the critical regularity, see \cite{PSST:1D, Tzirakis}.

The natural global well-posedness and scattering conjecture for \eqref{eq:NLS} is the following; it is still open.

\begin{conjecture}[Spacetime bounds for the mass-critical NLS]\label{Conj:NLS}
The defocusing mass-critical NLS is globally well-posed for arbitrary initial data $v_0\in L^2_x(\R)$.  In the focusing case,
the same conclusion holds for initial data $v_0\in L^2_x(\R)$ with $M(v_0)<2\rootsixfifths M(Q)$.  Furthermore, in both cases,
the global solution satisfies the following spacetime bounds:
\begin{align*}
\int_\R\int_\R |v(t,x)|^6 \,dx\,dt \leq C(M(v_0)).
\end{align*}
\end{conjecture}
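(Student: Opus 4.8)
The plan is to attack Conjecture~\ref{Conj:NLS} by the concentration-compactness-and-rigidity method of Kenig and Merle, in the form adapted to the mass-critical setting in work of Killip--Tao--Visan and Tao--Visan--Zhang. The first move is to reformulate the conjectured spacetime bound as a statement about a single, \emph{critical-mass} solution. Let $m_*$ be the supremum of all $m\ge 0$ with the property that every solution $v$ to \eqref{eq:NLS} with $M(v)\le m$ --- and, in the focusing case, with $m<2\rootsixfifths M(Q)$ --- obeys $\int_\R\int_\R|v|^6\,dx\,dt\le C(m)<\infty$. The Cazenave--Weissler small-data theory gives $m_*>0$, and the conjecture is exactly the assertion that $m_*=\infty$ in the defocusing case and $m_*=2\rootsixfifths M(Q)$ in the focusing case. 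Assuming this fails, I would produce a nonzero solution $v_*$ with $M(v_*)=m_*$ whose scattering size is infinite in both time directions. The tool is a linear profile decomposition for the free propagator $e^{it\partial_{xx}}$, adapted to the full group of symmetries leaving \eqref{eq:NLS} and the mass invariant (scaling \eqref{scaling-NLS}, spatial translation, Galilean boost, and time translation), together with the perturbation theory (in the spirit of Theorem~\ref{T:stab}): along a sequence of near-minimal blowup solutions with masses decreasing to $m_*$, the profile decomposition plus stability forces a single profile to survive, and it generates $v_*$.

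The second step is to upgrade $v_*$ to an almost periodic solution modulo symmetries. Running the profile-decomposition argument along an arbitrary sequence of times, minimality of the mass forces all but one profile to vanish, which yields frequency-scale, spatial-center, and frequency-center functions $N,x,\xi:I\to\R$ (with $N>0$) such that $\{\, e^{ix\xi(t)}\,N(t)^{-1/2}\,v_*\bigl(t,\,N(t)^{-1}(x-x(t))\bigr)\,\}_{t\in I}$ is precompact in $L^2_x(\R)$. A further normalization and compactness analysis --- exploiting that $N$ is locally roughly constant on $I$, and splitting according to whether the lifespan is finite or infinite and according to the behaviour of $N$ at the endpoints of $I$ --- should then reduce matters to three scenarios, exactly as in the mass-critical literature: a \emph{self-similar} solution with $I=(0,\infty)$ and $N(t)\sim t^{-1/2}$; a \emph{soliton-like} solution, global with $N(t)\equiv 1$ and, after a Galilean transformation, $\xi(t)\equiv 0$; and a \emph{double high-to-low frequency cascade}, global with $\sup_{t}N(t)<\infty$ and $\liminf_{t\to\pm\infty}N(t)=0$.

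The third and decisive step is the rigidity theorem: each of these three enemies must be shown to be empty. For the self-similar and cascade solutions the strategy is a frequency-localized Duhamel bootstrap that exploits the almost periodicity --- no appreciable mass sits at frequencies much below $N(t)$ --- to prove that these solutions enjoy extra regularity and decay, lying in $H^s_x$ for every $s\ge 0$ and even in $\dot H^{-\eps}_x$ for some $\eps>0$. For the cascade, this negative-regularity bound together with conservation of mass is incompatible with the characteristic frequency $N(t)$ drifting to zero. For the self-similar solution, membership in $H^1_x$ gives a finite conserved energy, which (via the sharp Gagliardo--Nirenberg inequality in the focusing case) is incompatible with the blowup rate $N(t)\sim t^{-1/2}$ as $t\to 0^+$. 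The soliton-like solution must be excluded by a frequency-localized interaction Morawetz inequality: its right-hand side is finite by the a priori estimates, while for a solution keeping a fixed amount of mass in a bounded spatial region at every time the left-hand side diverges.

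I expect the rigidity step to be the crux, and within it the one-dimensional interaction Morawetz estimate for the soliton-like case to be the hardest single ingredient: in a single space dimension the interaction Morawetz identity is far weaker than in higher dimensions, so controlling the errors in its frequency-localized version --- those arising from the commutator of the Morawetz multiplier with the Littlewood--Paley projection and from the wandering frequency parameter $\xi(t)$ --- is delicate. The negative-regularity bootstrap needed in the cascade case is the second most serious obstacle, since it requires sharp bilinear Strichartz estimates for $e^{it\partial_{xx}}$ tuned to the $1$D geometry. By contrast, the profile decomposition itself, while technically heavy because of the Galilean symmetry (it is of Carles--Keraani / Merle--Vega type), and the reduction to the three enemies are by now standard and should pose no conceptual difficulty.
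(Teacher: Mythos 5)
The statement you set out to prove is not a theorem of this paper; it is Conjecture~\ref{Conj:NLS}, which the paper explicitly declares to be open (``The natural global well-posedness and scattering conjecture for \eqref{eq:NLS} is the following; it is still open'') and which it \emph{assumes} as a hypothesis in Theorems~\ref{T:reduct} and~\ref{T:enemies}. There is therefore no proof in the paper against which your argument can be compared --- you have tried to prove one of the paper's standing hypotheses. Everything the paper actually establishes (the stability theory, Theorem~\ref{T:NLS emb} on oscillatory profiles, the reduction to almost periodic solutions, and the three-enemy classification) is for the gKdV equation \eqref{eq:gKdV}, conditioned on Conjecture~\ref{Conj:NLS} being true; no claim at all is made about resolving Conjecture~\ref{Conj:NLS} itself.

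As to the substance of your sketch: the roadmap you lay out (critical mass, linear profile decomposition with the Galilei symmetry, reduction to self-similar, soliton-like, and cascade scenarios, then rigidity via extra regularity, conserved energy, and a frequency-localized interaction Morawetz inequality) is indeed the recognized program for this problem, and you correctly identify the one-dimensional interaction Morawetz estimate and the negative-regularity bootstrap as the genuine obstructions. But those two points are precisely where the entire mathematical content of a proof would have to live, and your proposal offers no argument there --- only a description of what kind of estimate would be needed and an acknowledgment that it is delicate. At the level of detail given, the rigidity step is asserted, not proved. So even setting aside the mismatch with what the paper claims, the proposal is a program rather than a proof: the first two stages (extraction of a minimal-mass almost periodic solution and reduction to three enemies) are by now routine adaptations of the concentration-compactness machinery, but the third stage, which is the whole theorem, is left open. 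If your goal was to prove the paper's conditional theorems, you should instead have assumed Conjecture~\ref{Conj:NLS} and worked on the gKdV side, which is where the paper's actual arguments are.
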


Recently, Tao \cite{Tao:2remarks} used the fact that solutions to \eqref{eq:NLS} can be used to build solutions to \eqref{eq:gKdV}
in order to show that Conjecture~\ref{Conj:gKdV} implies Conjecture~\ref{Conj:NLS}.  More precisely, he showed

\begin{theorem}[Conjecture~\ref{Conj:gKdV} almost implies Conjecture~\ref{Conj:NLS}, \cite{Tao:2remarks}]\label{T:Tao}
Fix $\mu=\pm 1$ and assume that Conjecture~\ref{Conj:gKdV} holds for initial data $u_0\in L_x^2(\R)$ with $M(u_0)< M$ for some
$M\in (0,\infty)$. Then Conjecture~\ref{Conj:NLS} holds for initial data $v_0\in L_x^2(\R)$ with $M(v_0)<2M$.
\end{theorem}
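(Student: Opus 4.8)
The plan is to argue by contraposition, exploiting the classical observation of \cite{CCT, Tao:2remarks} that a highly oscillatory solution of \eqref{eq:gKdV} evolves, to leading order, like a solution of the mass-critical NLS \eqref{eq:NLS}. Concretely, if $u(t,x)=\Re\bigl(e^{i\phi}\psi(t,x)\bigr)$ with $\phi:=Nx+N^3t$, with $\psi$ slowly varying and $N$ large, then the Airy evolution acts on the envelope $\psi$ as a transport along $x+3N^2t=\mathrm{const}$ composed with a Schr\"odinger evolution of dispersion coefficient $3N$, while the $e^{i\phi}$-component of $\partial_x(u^5)$ is, to leading order, a constant multiple of $iN\,|\psi|^4\psi$; matching the $e^{i\phi}$-harmonics and rescaling time by $3N$ forces $\psi$ to solve \eqref{eq:NLS}, the numerical constant $\tfrac{5}{24}$ and the time orientation of \eqref{eq:NLS} being precisely what makes the match work. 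Passing from the real wave packet $u$ to its complex envelope $\psi$ doubles the mass, and this is the source of the factor $2$ in the statement. Assuming Conjecture~\ref{Conj:gKdV} for masses below $M$, I will derive a contradiction from the assumption that Conjecture~\ref{Conj:NLS} fails below mass $2M$.

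First I would convert the hypothetical failure of Conjecture~\ref{Conj:NLS} into a convenient sequence of data. If that conjecture fails below mass $2M$, then by the local theory of Cazenave--Weissler, together with the analogue for \eqref{eq:NLS} of the blowup criterion of Theorem~\ref{T:local}, there is a maximal-lifespan solution $v_\ast:I_\ast\times\R\to\C$ with $M(v_\ast)<2M$ and $\|v_\ast\|_{L^6_{t,x}(I_\ast\times\R)}=\infty$. Choose compact intervals $J_n\Subset I_\ast$ approaching the side on which the scattering size blows up, so that $\|v_\ast\|_{L^6_{t,x}(J_n\times\R)}$ is finite but $\to\infty$. Approximating the data $v_\ast(t_0)$, $t_0\in J_n$, in $L^2_x$ by Schwartz functions and using the stability theory and persistence of regularity for \eqref{eq:NLS}, one obtains Schwartz-class solutions $v_n$ defined on $J_n$ with $M(v_n)\to M(v_\ast)<2M$ and $\|v_n\|_{L^6_{t,x}(J_n\times\R)}\to\infty$. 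The regularity of $v_n$ is needed both to afford a harmless high-frequency truncation and to estimate error terms below.

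Next, for each $n$ I would choose a large parameter $N=N_n$ and set
$$
\tilde u_n(t,x):=\Re\Bigl[e^{i(Nx+N^3t)}\,\bigl(P_{\le N^{1/2}}v_n\bigr)\bigl(3Nt,\ x+3N^2 t\bigr)\Bigr],
$$
a real-valued function on the short interval $I_n:=\tfrac1{3N}J_n$ (translated to contain $0$), where $P_{\le N^{1/2}}$ truncates to spatial frequencies $\lesssim N^{1/2}$. A computation then yields: (a) $M(\tilde u_n)\to\tfrac12 M(v_n)$ as $N_n\to\infty$ (the $e^{\pm 2i\phi}$ cross terms average away), so $M(\tilde u_n)<M-\eps$ for some fixed $\eps>0$ once $n$ and $N_n$ are large; (b) the gKdV scattering size of the wave packet equals, up to an absolute constant, that of its envelope, so by taking $N_n$ large one arranges $S_{I_n}(\tilde u_n)\gtrsim \|v_n\|_{L^6_{t,x}(J_n\times\R)}^6\to\infty$; and (c) $\tilde u_n$ solves \eqref{eq:gKdV} up to an error $e_n$ whose norm in the appropriate (Strichartz/smoothing) space dual to the gKdV local theory tends to $0$ as $N_n\to\infty$. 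Granting (c), the stability theory for \eqref{eq:gKdV} (see Theorem~\ref{T:stab}; equivalently, the perturbation lemma behind Theorem~\ref{T:local}) applies: letting $u_n$ be the true solution of \eqref{eq:gKdV} agreeing with $\tilde u_n$ at one time of $I_n$, Conjecture~\ref{Conj:gKdV} gives $S_\R(u_n)\le C(M(u_n))=C(M(\tilde u_n))\le C(M-\eps)$, a fixed finite number (the function $C$ may be taken nondecreasing). Since this bound --- hence the smallness threshold the perturbation lemma demands of $e_n$ --- is fixed, choosing $N_n$ large enough that $\|e_n\|$ drops below it gives $S_{I_n}(\tilde u_n)\le C(M-\eps)+1$ for all large $n$, contradicting (b). This proves the theorem. (In the focusing case one takes $M\le M(Q)$, which is why the conclusion only \emph{almost} reaches Conjecture~\ref{Conj:NLS}: the threshold $2M(Q)$ falls short of $2\rootsixfifths M(Q)$.)

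The main work --- and the step I expect to be the chief obstacle --- is (c). Write $e_n=(\partial_t+\partial_{xxx})\tilde u_n-\mu\partial_x(\tilde u_n^5)$ and expand $\tilde u_n^5$ into its harmonics $e^{\pm ik\phi}$, $k\in\{1,3,5\}$, where $\phi=Nx+N^3t$. The $k=1$ harmonic of $\mu\partial_x(\tilde u_n^5)$ matches $(\partial_t+\partial_{xxx})\tilde u_n$ precisely because $P_{\le N^{1/2}}v_n$ (approximately) solves \eqref{eq:NLS}; the residue there --- the cubic dispersion correction $\partial_x^3$ falling on the envelope, and the lower-order part of $\partial_x(\tilde u_n^5)$ --- is $O(N^{-1})$ times Strichartz norms of derivatives of $v_n$, which is exactly where the truncation to frequencies $\lesssim N^{1/2}$ and the Schwartz regularity of $v_n$ enter. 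The delicate terms are the non-resonant harmonics $k=3,5$: these are comparable in size to the main nonlinear term, but their phases are badly detuned from the Airy flow at their spatial frequency, since $(\partial_t+\partial_{xxx})e^{\pm ik\phi}=\mp i(k^3-k)N^3\,e^{\pm ik\phi}\ne 0$; hence in Duhamel's formula they are integrated against a kernel oscillating at frequency $\sim N^3$, and an integration by parts in time gains a factor $N^{-3}$ that outweighs the $N$ produced by $\partial_x$. Making this rigorous --- either via this non-stationary-phase argument, or by adjoining explicit $O(N^{-1})$ corrections at the harmonics $k=3,5$ to the ansatz --- together with the endpoint-type bookkeeping in the gKdV Strichartz/smoothing estimates, is the technical heart of the argument.
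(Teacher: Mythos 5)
The paper does not contain a proof of this theorem; it is quoted from \cite{Tao:2remarks} and only sketched at the top of Section~\ref{S:NLS emb}. Your proposal reproduces that sketch faithfully: pass to Schwartz NLS solutions $v_n$ on compact intervals whose $L^6_{t,x}$ norm blows up, form the modulated/boosted wave packets $\tilde u_n$, control the error (with the non-resonant harmonics $k=3,5$ handled by $X^{s,b}$/non-stationary phase, as in the paper's Lemma~\ref{L:error-control}), and close via Theorem~\ref{T:stab} and Conjecture~\ref{Conj:gKdV}. The factor of $2$ in the mass and the $\sqrt{6/5}$ deficit in the focusing case are also tracked correctly.

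Step (b), however, is not right as stated, and it is precisely the ``no common critical spacetime norm'' subtlety the paper flags. The gKdV scattering norm $L^5_xL^{10}_t$ is \emph{not} transported to the NLS norm $L^6_{t,x}$ of the envelope under the shear $(t,x)\mapsto(3Nt,\,x+3N^2t)$: that change of variables intertwines the inner ($t$) and outer ($x$) integrations of the mixed norm, and the exponents $(5,10)$ versus $(6,6)$ do not match, so there is no pointwise comparison between $\|\tilde u_n\|_{L^5_xL^{10}_t}$ and $\|v_n\|_{L^6_{t,x}}$. (One can cook up envelope shapes for which the ratio is as small as one likes.) The correct comparison goes through the \emph{symmetric} Airy Strichartz norm. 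Because the spatial Fourier support of $e^{iNx}(P_{\le N^{1/2}}v_n)$ lives in $|\xi|\sim N$, one has $|\partial_x|^{1/6}\tilde u_n\approx N^{1/6}\tilde u_n$, and the Jacobian of the shear contributes exactly $(3N)^{-1/6}$ in $L^6_{t,x}$, so
\[
\bigl\||\partial_x|^{1/6}\tilde u_n\bigr\|_{L^6_{t,x}(I_n\times\R)}\;\longrightarrow\; c\,\|v_n\|_{L^6_{t,x}(J_n\times\R)}
\qquad\text{as } N\to\infty,
\]
for an absolute constant $c>0$. The contradiction then comes from the third and fourth blocks of conclusions of Theorem~\ref{T:stab}, which bound $\||\partial_x|^{1/6}(u_n-\tilde u_n)\|_{L^6_{t,x}}$ and $\||\partial_x|^{1/6}u_n\|_{L^6_{t,x}}$ by $C(M,M',L)$ --- but this only closes once $L=\|\tilde u_n\|_{L^5_xL^{10}_t(I_n)}$ is known to be uniformly bounded in $n$. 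That brings up a second wrinkle you elide: for fixed $n$, $L_n:=\|\tilde u_n\|_{L^5_xL^{10}_t(I_n)}$ is finite (Schwartz envelope, compact interval) but not \emph{a priori} bounded in $n$, so the threshold $\eps_1(M,M',L_n)$ for Theorem~\ref{T:stab} is not ``fixed'' as you assert. You must choose $N_n$ \emph{after} seeing $L_n$; applying the first conclusion of Theorem~\ref{T:stab} together with Conjecture~\ref{Conj:gKdV} then yields \emph{a posteriori} that $L_n\le C(M-\eps)^{1/5}+1$, uniformly, after which the $|\partial_x|^{1/6}L^6_{t,x}$ argument above gives the contradiction.
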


\begin{remark}
Note that in the defocusing case, Theorem~\ref{T:Tao} shows that the full version of Conjecture~\ref{Conj:gKdV} implies the full
version of Conjecture~\ref{Conj:NLS}.  In the focusing case, the result is somewhat inefficient as it only proves that the full
version of Conjecture~\ref{Conj:gKdV} implies Conjecture~\ref{Conj:NLS} in the small mass case $M(v_0)<2M(Q)$, missing the desired hypothesis
by a factor of $\sqrt{6/5}$.
\end{remark}

As this theorem shows, any attack on Conjecture~\ref{Conj:gKdV} must also address Conjecture~\ref{Conj:NLS}, at least in some way.
The approach we adopt here is to prove a form of converse to Theorem~\ref{T:Tao}, namely Theorem~\ref{T:NLS emb}, which we contend
(cf. Remark~\ref{R:inverse}) precisely isolates the role of NLS as an obstruction to proving Conjecture~\ref{Conj:gKdV}.

The principal thrust of this paper however, is to provide what we believe to be an important first step to verifying Conjecture~\ref{Conj:gKdV}.
Our reason for such optimism stems from recent progress on other dispersive equations at critical regularity: NLW, wave maps,
and more specifically, NLS.

The recent progress on the mass- and energy-critical NLS can be found in \cite{borg:scatter, CKSTT:gwp, grillakis, KenigMerle,
KTV, Berbec, KVZ, RV, Tao: gwp radial, tvz:cc, TVZ:sloth, V:thesis art}.  Here we apply the
techniques developed to attack these problems to Conjecture~\ref{Conj:gKdV}.  More precisely, using concentration-compactness
techniques, we will show that if Conjecture~\ref{Conj:gKdV} were to fail (but Conjecture~\ref{Conj:NLS} holds true), then there
exists a minimal-mass blowup solution to \eqref{eq:gKdV}.  Moreover, this minimal-mass counterexample to
Conjecture~\ref{Conj:gKdV} has good compactness properties.

To state our results, we need the following definition.

\begin{definition}[Almost periodicity modulo symmetries]\label{D:AP}
Fix $\mu=\pm 1$.  A solution $u$ to \eqref{eq:gKdV} with lifespan $I$ is said to be \emph{almost periodic modulo symmetries} if
there exist functions $N: I \to \R^+$, $x:I\to \R$, and $C: \R^+ \to \R^+$ such that for all $t \in I$ and $\eta > 0$,
$$ \int_{|x-x(t)| \geq C(\eta)/N(t)} |u(t,x)|^2\, dx
    +\int_{|\xi| \geq C(\eta) N(t)} | \hat u(t,\xi)|^2\, d\xi\leq \eta.
$$
We refer to the function $N$ as the \emph{frequency scale function} for the solution $u$, $x$ the \emph{spatial center
function}, and to $C$ as the \emph{compactness modulus function}.
\end{definition}

\begin{remark}
The parameter $N(t)$ measures the frequency scale of the solution at time $t$, while $1/N(t)$ measures the spatial scale. It is
possible to multiply $N(t)$ by any function of $t$ that is bounded both above and below, provided that we also modify the
compactness modulus function $C$ accordingly.
\end{remark}

\begin{remark}
It follows from the Ascoli--Arzela Theorem that a family of functions is precompact in $L^2_x(\R)$ if and only if it is norm-bounded and there
exists a compactness modulus function $C$ so that
$$
\int_{|x| \geq C(\eta)} |f(x)|^2\ dx + \int_{|\xi| \geq C(\eta)} |\hat f(\xi)|^2\ d\xi \leq \eta
$$
for all functions $f$ in the family.  Thus, an equivalent formulation of Definition~\ref{D:AP} is as follows: $u$ is almost
periodic modulo symmetries if and only if
$$
\{ u(t): t \in I \} \subseteq \{ \lambda^{\frac12} f(\lambda (x+x_0)) : \, \lambda\in(0,\infty), \ x_0\in \R, \text{ and }f \in
K \}
$$
for some compact subset $K$ of $L^2_x(\R)$.
\end{remark}

In view of the small data result in Theorem~\ref{T:local} and the stability result Theorem~\ref{T:stab}, the failure of
Conjecture~\ref{Conj:gKdV} is equivalent to the existence of a critical mass $M_c\in (0,\infty)$ when $\mu=1$ or $M_c\in
(0,M(Q))$ when $\mu=-1$ such that
\begin{equation}\label{eq:critical-mass}
L(M)<\infty \text{ for } M<M_c \quad \text{and} \quad L(M)=\infty \text{ for } M\geq M_c;
\end{equation}
here,
$$L(M):=\sup \{S_I(u):\, u: I\times \R\to\R \text{ with } M(u)\le M \}$$
and the supremum is taken over all solutions $u:I\times \R\to\R$ to \eqref{eq:gKdV} obeying $M(u)\leq M$.  Indeed,
for sufficiently small masses $M$, Theorem~\ref{T:local} gives
$$
L(M)\lesssim M^{5/2},
$$
while Theorem~\ref{T:stab} shows that $L$ is a continuous map into $[0,\infty]$ in both the defocusing and focusing cases.

Now we are ready to state the first result of this paper.

\begin{theorem}[Reduction to almost periodic solutions]\label{T:reduct}
Fix $\mu=\pm 1$ and assume Conjecture~\ref{Conj:gKdV} fails for this value of $\mu$.  Let $M_c$ denote the corresponding critical mass
and assume that Conjecture~\ref{Conj:NLS} holds for initial data with mass $M(v_0)\leq 2M_c$.  Then there exists a maximal-lifespan
solution $u$ to the mass-critical gKdV \eqref{eq:gKdV} with mass $M(u)=M_c$, which is almost periodic modulo symmetries and blows up
both forward and backward in time.
\end{theorem}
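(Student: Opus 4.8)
The plan is to run the concentration-compactness-and-rigidity machinery in the form developed for the mass-critical NLS in \cite{KTV, TVZ:sloth} (cf. also \cite{KenigMerle}), adapted to the Airy flow and with the mass-critical NLS entering as an auxiliary obstruction. Using \eqref{eq:critical-mass} and the definition of $L$, I would first fix a sequence of solutions $u_n : I_n\times\R\to\R$ to \eqref{eq:gKdV} with $M(u_n)\to M_c$ and $S_{I_n}(u_n)\to\infty$, and, after translating each $u_n$ in time, arrange that $0\in I_n$ and that $u_n$ blows up quantitatively in both directions from $0$, i.e. $S_{I_n\cap[0,\infty)}(u_n)\to\infty$ and $S_{I_n\cap(-\infty,0]}(u_n)\to\infty$; this is possible because at any split point one half carries a definite fraction of $S_{I_n}(u_n)$ while the two halves vary oppositely as the split point moves. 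The theorem then reduces to showing that, along a subsequence, $u_n(0)$ converges in $L^2_x(\R)$ modulo the scaling and translation symmetries $f\mapsto\lambda^{1/2}f(\lambda(\cdot+x_0))$: the limiting datum launches the desired critical solution $u_c$, and rerunning the argument on the time translates of $u_c$ upgrades this to precompactness of the full orbit, which by the Ascoli--Arzela criterion recorded after Definition~\ref{D:AP} is exactly almost periodicity modulo symmetries.

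The engine I would build is a linear profile decomposition for $e^{-t\partial_x^3}$ in $L^2_x(\R)$: along a subsequence $u_n(0)=\sum_{j\le J} g_n^j e^{-t_n^j\partial_x^3}\phi^j+w_n^J$, with each $g_n^j$ a scaling composed with a spatial translation, $t_n^j\in\R$, asymptotically orthogonal parameters, decoupling of mass ($\sum_j M(\phi^j)\le M_c$), and $\limsup_n S_\R(e^{-t\partial_x^3}w_n^J)\to 0$ as $J\to\infty$. The feature absent from the NLS setting is that a profile may concentrate at a frequency $\xi_n^j$ which, after rescaling to unit spatial scale, tends to infinity; in that regime the associated nonlinear object is governed not by \eqref{eq:gKdV} but by the mass-critical NLS \eqref{eq:NLS}, via the oscillatory ansatz of \cite{CCT, Tao:2remarks}. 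Since a real Airy profile has frequency content near $\pm\xi_n^j$, the corresponding NLS profile has mass asymptotically $2M(\phi^j)\le 2M_c$ --- this is precisely why the hypothesis on Conjecture~\ref{Conj:NLS} is imposed up to mass $2M_c$, in parallel with Theorem~\ref{T:Tao}. Profiles with bounded $\xi_n^j$ are ordinary gKdV profiles.

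Next I would attach a nonlinear profile to each linear one --- a maximal-lifespan solution of \eqref{eq:gKdV} for a gKdV-type profile, or the \cite{CCT, Tao:2remarks} NLS-built approximate solution of \eqref{eq:gKdV} for an NLS-type profile --- and argue that exactly one profile survives. If there were two nontrivial profiles, or a single profile of mass strictly below $M_c$, then every profile would have mass $<M_c$ (respectively $<2M_c$ in the NLS case), hence finite scattering size: for gKdV profiles by \eqref{eq:critical-mass}, for NLS profiles by the assumed validity of Conjecture~\ref{Conj:NLS} up to mass $2M_c$ together with the transference of spacetime bounds of \cite{CCT, Tao:2remarks}. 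Feeding the nonlinear profile decomposition $\sum_j v_n^j + e^{-t\partial_x^3}w_n^J$ into the stability theory of Theorem~\ref{T:stab} would then bound $S_{I_n}(u_n)$ uniformly in $n$, contradicting the choice of $u_n$. So there is a single nontrivial profile $\phi^1$ with $M(\phi^1)=M_c$ and $w_n^1\to 0$ in $L^2_x(\R)$; it must be a gKdV profile (else the same contradiction via Conjecture~\ref{Conj:NLS}), so after a bounded symmetry adjustment it sits at unit frequency scale, and $t_n^1$ must stay bounded (if $t_n^1\to\pm\infty$, then $e^{-t_n^1\partial_x^3}\phi^1$ has small scattering size on the corresponding half-line, which via Theorem~\ref{T:stab} would bound $S_{I_n}(u_n)$ on that half-line, contradicting the two-sided blowup arranged at the outset). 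Absorbing $\lim t_n^1$ into $\phi^1$ yields the claimed convergence of $u_n(0)$ modulo symmetries.

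Finally, letting $u_c$ be the solution with $u_c(0)=\phi^1$, one has $M(u_c)=M_c$, and by Theorem~\ref{T:stab} together with lower semicontinuity of the scattering size under this convergence, $S(u_c)=\infty$ both forward and backward in time, so $u_c$ blows up in both time directions; applying the argument above to the mass-$M_c$, two-sidedly-blowing-up solutions $u_c(\cdot+\tau_k)$, for an arbitrary sequence $\tau_k$ in the lifespan of $u_c$, shows that $\{u_c(t)\}$ is precompact modulo scaling and translation, i.e. almost periodic modulo symmetries, with $N(t)$ and $x(t)$ read off from the symmetry parameters. I expect the real work --- and the main obstacle --- to lie in the linear profile decomposition and the NLS-regime analysis: constructing the decomposition in $L^2_x(\R)$ for the Airy propagator, classifying profiles by the asymptotics of their frequency parameters, proving decoupling of mass and of scattering size, and, most delicately, verifying that the NLS-built approximate solutions genuinely solve \eqref{eq:gKdV} up to errors small in the Strichartz norms of Theorem~\ref{T:local}, so that the stability theory of Theorem~\ref{T:stab} can be invoked. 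Once these ingredients are in place, the induction-on-mass argument is routine.
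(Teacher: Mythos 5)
Your proposal is correct and follows essentially the same route as the paper: arrange two-sided blowup by a well-chosen time translation, isolate a Palais--Smale type compactness assertion, prove it via the Airy linear profile decomposition with the case split into a single dominant profile versus multiple profiles, feed the nonlinear profile sum plus linear remainder into the long-time stability theorem to derive a contradiction when mass does not concentrate, and handle the oscillatory ($\xi_n\to\infty$) regime by passing to NLS via the $\phi\mapsto\Re(e^{ix\xi_n}\phi)$ ansatz of \cite{CCT, Tao:2remarks}, which is exactly the content of Theorem~\ref{T:NLS emb} (and which is indeed, as you anticipate, where the real work lies, together with the decoupling of nonlinear profiles in Lemma~\ref{L:decoupling}). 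The Riemann--Lebesgue computation $M[\Re(e^{ix\xi_n}\phi)]\to\tfrac12 M(\phi)$, and hence the mass threshold $2M_c$ on Conjecture~\ref{Conj:NLS}, is exactly as you describe.
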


\begin{remark}
From the definition of $M_c$ and Theorem~\ref{T:Tao}, we see that Conjecture~\ref{Conj:NLS} holds for solutions $v$ to \eqref{eq:NLS} with
initial data $v_0\in L_x^2(\R)$ satisfying $M(v_0)< 2M_c$.  Thus, the hypothesis needed for Theorem~\ref{T:reduct} is that
Conjecture~\ref{Conj:NLS} holds for solutions to \eqref{eq:NLS} with $M(v_0)=2M_c$.
\end{remark}

Thus, in order to prove Conjecture~\ref{Conj:gKdV} (assuming that Conjecture~\ref{Conj:NLS} holds) it suffices to preclude the existence
of minimal-mass blowup solutions.  Employing a combinatorial argument in the spirit of \cite[Theorem~1.16]{KTV}, one can prove that no matter
how small the class of minimal-mass blowup solutions to \eqref{eq:gKdV} is, one can always find at least one of three specific enemies to
Conjecture~\ref{Conj:gKdV}.  More precisely, in Section~\ref{S:enemies}, we adapt the argument given in \cite{KTV} to obtain

\begin{theorem}[Three special scenarios for blowup]\label{T:enemies}
Fix $\mu=\pm 1$ and suppose that Conjecture~\ref{Conj:gKdV} fails for this choice of $\mu$.  Let $M_c$ denote the corresponding critical
mass and assume that Conjecture~\ref{Conj:NLS} holds for initial data with mass $M(v_0)\leq 2M_c$.  Then there exists a
maximal-lifespan solution $u:I\times\R\to \R$ to \eqref{eq:gKdV} with mass $M(u)=M_c$, which is almost periodic modulo symmetries and blows up both forward
and backward in time. Moreover, we can also ensure that the lifespan $I$ and the
frequency scale function $N:I\to\R^+$ match one of the following three scenarios:
\begin{itemize}
\item[I.] (Soliton-like solution) We have $I = \R$ and
\begin{equation*}
 N(t) = 1 \quad \text{for all} \quad t\in \R.
\end{equation*}
\item[II.] (Double high-to-low frequency cascade) We have $I = \R$,
\begin{equation*}
\liminf_{t \to -\infty} N(t) = \liminf_{t \to +\infty} N(t) = 0,
\end{equation*}
and
\begin{equation*}
\sup_{t \in \R} N(t) < \infty.
\end{equation*}
\item[III.] (Self-similar solution) We have $I = (0,+\infty)$ and
\begin{equation*}
N(t) = t^{-1/3} \quad \text{for all} \quad t\in I.
\end{equation*}
\end{itemize}
\end{theorem}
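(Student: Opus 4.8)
The plan is to follow the now-standard reduction-to-minimal-blowup-then-extract-special-scenario scheme, using Theorem~\ref{T:reduct} as the starting point and then running a "Palais--Smale modulo symmetries" plus combinatorial argument in the spirit of \cite[Theorem~1.16]{KTV}. First I would invoke Theorem~\ref{T:reduct} to obtain \emph{some} maximal-lifespan solution $u$ of mass exactly $M_c$ that is almost periodic modulo symmetries and blows up both forward and backward in time, with associated frequency scale function $N:I\to\R^+$, spatial center $x:I\to\R$, and compactness modulus $C$. The goal is to upgrade this generic critical element into one whose lifespan $I$ and frequency function $N$ obey one of the three rigid templates. The device, borrowed from \cite{KTV}, is to work with a whole family of critical solutions and select, by a limiting procedure applied to sequences of (rescaled, translated) critical elements, a solution whose $N$ is as well-behaved as possible on each half of its lifespan.

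The key steps, in order, would be: (i) Establish the basic structural facts about $N$ for \emph{any} almost periodic critical solution: a "local constancy" statement (there is $\delta=\delta(u)>0$ so that $N(t)\sim N(t')$ whenever $|t-t'|\le \delta N(t)^{-3}$), obtained by feeding the almost-periodicity compactness and the stability theory Theorem~\ref{T:stab} into the local theory; and the consequent fact that $\int_I N(t)^3\,dt=+\infty$ on each side where the solution blows up (so that blowup in finite time forces $N(t)\to\infty$ at that endpoint, and one may normalize to $N\ge 1$ or to $N(t)=t^{-1/3}$). The cube reflects the gKdV scaling \eqref{scaling}: $t\mapsto\lambda^3 t$, $x\mapsto\lambda x$. (ii) Run the limiting/extraction argument: consider sequences $t_n\to\sup I$ and apply the concentration-compactness machinery underlying Theorem~\ref{T:reduct} to the translated-rescaled data $u^{\lambda_n}(t_n+\cdot)$ with $\lambda_n=N(t_n)$; this produces a new critical solution along which $N$ is bounded near $+\infty$ or escapes to $0$, and similarly near $-\infty$. (iii) Perform the combinatorial trichotomy exactly as in \cite[Theorem~1.16]{KTV}: partition $I$ by the behaviour of $N$ and, depending on whether $N$ stays bounded below, returns to $0$, or blows up to $\infty$ at each endpoint, peel off the self-similar case (finite-time blowup endpoint, forcing $N(t)=t^{-1/3}$ after rescaling and time-translation to put the blowup time at $0$), the soliton-like case ($N$ bounded above and below, rescaled to $N\equiv1$), and the double cascade ($N$ bounded above but $\liminf N=0$ at both ends of $I=\R$). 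One must also check these scenarios are exhaustive: the finiteness of $L(M)$ for $M<M_c$ plus local constancy rules out the mixed behaviours, and the asymptotic symmetry (Galilean-type) reductions available for \eqref{eq:gKdV} let one normalize $x(t)$ and the remaining scaling freedom.

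The main obstacle I expect is step (ii), the limiting extraction: unlike NLS, the gKdV flow \eqref{eq:gKdV} has no Galilean invariance and its natural symmetry group is only scaling and translation, so the linear profile decomposition and the inverse Strichartz inequality that power the concentration-compactness argument behave differently for the Airy propagator $e^{-t\partial_x^3}$; in particular the high-frequency regime is precisely where gKdV mimics NLS (this is the reason Conjecture~\ref{Conj:NLS} is a hypothesis), and one must carefully track which bubbles in a profile decomposition are "Airy-like" versus "Schr\"odinger-like" and ensure the extracted minimal solution remains of gKdV type with the correct compactness. Making the rescaling limits $u^{N(t_n)}(t_n+\cdot)$ converge to an \emph{actual} solution (rather than leaking mass to an NLS-type object) is the delicate point, and this is exactly where the inputs from \cite{KTV}-style analysis — adapted to the Airy setting and to the conditional NLS hypothesis at mass $2M_c$ — have to be combined. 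A secondary technical nuisance is the space $L^5_xL^{10}_t$ (note the \emph{non-standard} order: spatial integration outside), which makes several estimates in the local and stability theory more intricate than their NLS analogues; but given Theorems~\ref{T:local}, \ref{T:stab}, and \ref{T:reduct}, that machinery is available, so the present proof is mostly a careful transcription of the \cite{KTV} combinatorics into the gKdV symmetry group.
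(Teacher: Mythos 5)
Your high-level plan — start from Theorem~\ref{T:reduct}, establish local constancy of $N$, pass to limits of normalized copies of the critical element, and run the \cite{KTV} trichotomy — matches the paper's Section~\ref{S:enemies}, which organizes the trichotomy around two quantities: the least oscillation
$\osc(T)$ of $N_v$ over normalized windows of duration $T$, and
$a(t_0) = \bigl[\inf_{t\le t_0}N_v(t)+\inf_{t\ge t_0}N_v(t)\bigr]/N_v(t_0)$. Case~1 ($\osc$ bounded) gives the soliton, Case~2 ($\osc\to\infty$, $\inf a=0$) the double cascade, Case~3 ($\osc\to\infty$, $\inf a>0$) the self-similar solution. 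You did not spell out these selectors, but that is a matter of detail rather than a gap.

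The one substantive issue is your step~(ii) and your identified ``main obstacle.'' You propose to re-run the concentration-compactness / profile-decomposition machinery on the rescaled data $u^{N(t_n)}(t_n+\cdot)$ and worry about mass leaking into NLS-type profiles. In fact this machinery is not invoked again in the paper's proof of Theorem~\ref{T:enemies}, and the NLS-leaking difficulty does not arise. The point is that once $v$ is almost periodic modulo symmetries, its normalizations $v^{[t_n]}(0)$ lie in a \emph{fixed compact subset of $L^2_x$ modulo symmetries}: convergence of a subsequence is then immediate from the definition of almost periodicity (no inverse Strichartz, no linear profile decomposition, no bookkeeping of Airy-like versus Schr\"odinger-like bubbles). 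The limit has mass $M_c$, the stability theory (Theorem~\ref{T:stab}) upgrades $L^2$-convergence of data to locally uniform convergence of solutions, and the limit inherits almost periodicity with the same compactness modulus. The concentration-compactness machinery and the hypothesis on Conjecture~\ref{Conj:NLS} were consumed entirely in establishing Proposition~\ref{P:Palais Smale} and Theorem~\ref{T:reduct}; Theorem~\ref{T:enemies} is a soft selection argument on top of that. You would eventually succeed down your route, but you would be reconstructing compactness that is already available for free, and you risk getting stuck on a phantom difficulty. A minor additional caution: your parenthetical invocation of ``Galilean-type'' reductions for \eqref{eq:gKdV} is misleading — as you yourself note earlier, gKdV has no Galilei symmetry; the only normalizations available are scaling, spatial translation, and time translation/reversal, and these suffice.
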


\begin{remark}\label{R:inverse}
In none of the three scenarios just described is there any known connection to solutions of NLS nor any other simpler equation.  This is our
justification for the claim made earlier that Theorem~\ref{T:NLS emb} precisely isolates the role of NLS in Conjecture~\ref{Conj:gKdV}.
\end{remark}

Using the analogue of Theorem~\ref{T:enemies} developed in the context of the mass-critical NLS (see \cite{KTV}), it is possible
to recast the role of Conjecture~\ref{Conj:NLS} in Theorems~\ref{T:reduct} and~\ref{T:enemies} as follows: Suppose Conjecture~\ref{Conj:gKdV}
fails; then there exists a minimal-mass blowup solution to \emph{either} \eqref{eq:gKdV} \emph{or} \eqref{eq:NLS}.  Moreover, in the former case
this solution can be taken to have the structure of one of the three scenarios listed above.  In the latter case the three scenarios are very similar
(cf. \cite{KTV}); there is an additional Galilei symmetry and the self-similar solution has $N(t)=t^{-1/2}$.

Let us now outline the content of the remainder of the paper with a few remarks on what novelties appear in the analysis.

In Section~\ref{S:lemmas} we recall the linear estimates that are essential for our arguments.  In particular, we recall
the linear profile decomposition for the Airy equation developed in \cite{Shao:Airy}.  Note that the defect of compactness
arising from highly oscillatory data (cf. the parameters $\xi_n$ in Lemma~\ref{L:profile decomp}) is not associated with a
symmetry of our equation; by comparison, in the mass-critical NLS context, it is associated to the Galilei boost symmetry.
This is the primary source of difficulty/novelty in our analysis; it is also the regime in which the connection to
Conjecture~\ref{Conj:NLS} arises.  An early manifestation of this nuisance appears when proving decoupling of the nonlinear profiles;
see Lemma~\ref{L:decoupling}.

In Section~\ref{S:stab} we develop a stability theory for gKdV, which controls the effect of both small perturbations to the initial data
and the addition of weak forcing terms.

In Section~\ref{S:NLS emb} we discuss the behaviour of highly oscillatory solutions to gKdV.  More precisely, we show how Conjecture~\ref{Conj:NLS}
implies the existence of spacetime bounds for such solutions to gKdV.  This is Theorem~\ref{T:NLS emb} and is our converse to Theorem~\ref{T:Tao}.
The relation between the proofs of these theorems will be elaborated upon there.

Section \ref{S:reduct} is devoted to the proof of Theorem~\ref{T:reduct}.  Again, the principal differences when compared to NLS arise
in the case of highly oscillatory profiles.  In particular, we rely upon Lemma~\ref{L:decoupling} and Theorem~\ref{T:NLS emb}.

The proof of Theorem~\ref{T:enemies} appears in Section~\ref{S:enemies} and is closely modeled on the analogous reduction for NLS
proved in \cite{KTV}.

\subsection*{Acknowledgements}  We would like to thank Terry Tao for useful comments.  The first author was supported by NSF grant DMS-0701085.
The last author was supported by NSF grant DMS-0901166.

%%%%%%%%%%%%%%%%%%%%%%%%%%%%%%%%%%%%%%%%%%%%%%%%%%%%%%%%%%%%%%%%%%%%%%%%%%%%%%%%%%%%%%%%%%%
%
%
%                                   Section
%
%
%%%%%%%%%%%%%%%%%%%%%%%%%%%%%%%%%%%%%%%%%%%%%%%%%%%%%%%%%%%%%%%%%%%%%%%%%%%%%%%%%%%%%%%%%%%

\section{Notation and useful lemmas}\label{S:lemmas}

\subsection{Some notation}
We write $X \lesssim Y$ or $Y \gtrsim X$ to indicate that $X \leq CY$ for some constant $C>0$.  We use $O(Y)$ to denote any quantity $X$
such that $|X| \lesssim Y$.  We use the notation $X \sim Y$ to mean $X \lesssim Y \lesssim X$.  If $C$ depends upon some
parameters, we will indicate this with subscripts; for example, $X \lesssim_u Y$ denotes the assertion that $X \leq C_u Y$ for
some $C_u$ depending on $u$; similarly for $X \sim_u Y$, $X = O_u(Y)$, etc.

For any spacetime slab $I\times \R$, we use $L_t^qL_x^r(I\times \R)$ and $L_x^rL_t^q(I\times \R)$ respectively, to denote the
Banach spaces of functions $u: I\times \R\to \C$ whose norms are
$$
\|u\|_{L_t^qL_x^r(I\times\R)}:=\Bigl(\int_I\|u(t)\|_{L^r_x}^q \, dt\Bigr)^{1/q}
$$
and
$$
\|u\|_{L_x^rL_t^q(I\times\R)}:=\Bigl(\int_\R\|u(x)\|_{L_t^q (I)}^r \, dx\Bigr)^{1/r},
$$
with the usual modifications when $q$ or $r$ is equal to infinity.  When $q=r$ we abbreviate $L^q_t L^q_x$ and $L^q_x L^q_t$ as
$L^q_{t,x}$.

We define the Fourier transform on $\R$ by
$$
\hat f(\xi):= (2\pi)^{-1/2} \int_{\R} e^{-ix\xi}f(x)\,dx.
$$
For $s\in \R$, we define the fractional differentiation/integral operator
$$
\widehat{|\partial_x|^s f}(\xi):=|\xi|^s\hat f(\xi),
$$
which in turn defines the homogeneous Sobolev norms
$$
\|f\|_{\dot W_x^{s,r}(\R)}:=\bigl\||\partial_x|^s f\bigr\|_{L_x^r(\R)}.
$$
When $r=2$, we denote the space $\dot W_x^{s,r}(\R)$ by $\dot H^s_x(\R)$.

\subsection{Linear estimates}

We start by recalling the usual Kato smoothing, Strichartz, and maximal function estimates associated to the Airy propagator.

\begin{lemma}[Linear estimates, \cite{Kato:smoothing, KPV:1, KPV}]\label{L:linear estimates}
Let $I$ be a compact time interval and let $u:I\times \R\to \R$ be a solution to the forced Airy equation
\begin{equation*}
(\partial_t+\partial_{xxx})u=\partial_x F +G.
\end{equation*}
Then we have the Kato smoothing, maximal function, and Strichartz estimates
\begin{align*}
\bigl\|\partial_xu\bigr\|_{L^\infty_xL^2_t(I\times \R)}& + \bigl\||\partial_x|^{-1/4}u\bigr\|_{L^4_xL^\infty_t(I\times \R)}
    + \|u\|_{L_t^\infty L_x^2(I\times\R)}\\
& +\bigl\||\partial_x|^{1/6}u\bigr\|_{L^6_{t,x}(I\times \R)} + \|u\|_{L_x^5L_t^{10}(I\times\R)}\\
&\qquad\qquad \lesssim \|u(t_0)\|_{L^2(\R)} + \|F\|_{L^1_xL^2_t(I\times \R)} + \|G\|_{L^1_tL^2_x(I\times \R)}
\end{align*}
for any $t_0\in I$.
\end{lemma}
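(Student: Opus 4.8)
The plan is to prove the estimate first for the homogeneous Airy evolution (i.e.\ $F=G=0$) and then pass to the forced equation via the Duhamel formula and duality. For the homogeneous piece, each of the five norms on the left-hand side is a known fixed-time or spacetime smoothing property of the Airy group $e^{-t\partial_x^3}$. The $L^\infty_t L^2_x$ bound is simply the unitarity of $e^{-t\partial_x^3}$ on $L^2_x$. The sharp Kato smoothing estimate $\|\partial_x e^{-t\partial_x^3}f\|_{L^\infty_x L^2_t}\lesssim\|f\|_{L^2_x}$ and the maximal-function estimate $\||\partial_x|^{-1/4}e^{-t\partial_x^3}f\|_{L^4_x L^\infty_t}\lesssim\|f\|_{L^2_x}$ are exactly the local smoothing and maximal-function bounds of Kenig--Ponce--Vega \cite{KPV:1,KPV}; I would quote these directly rather than reprove them, since their proofs are a stationary-phase / $TT^*$ analysis of the Airy kernel that is orthogonal to the present paper's concerns. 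The two remaining norms, $\||\partial_x|^{1/6}e^{-t\partial_x^3}f\|_{L^6_{t,x}}$ and $\|e^{-t\partial_x^3}f\|_{L^5_x L^{10}_t}$, are Strichartz-type estimates: the first follows from the dispersive bound $\|e^{-t\partial_x^3}\|_{L^1\to L^\infty}\lesssim |t|^{-1/3}$ together with the $TT^*$ method and a fractional-integration (Hardy--Littlewood--Sobolev) step, and the second is the diagonal mixed-norm Strichartz estimate for Airy established in \cite{KPV}; again I would cite these.

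Next I would assemble the forced estimate. Write the solution via Duhamel as
$$
u(t)=e^{-(t-t_0)\partial_x^3}u(t_0)+\int_{t_0}^{t}e^{-(t-s)\partial_x^3}\bigl(\partial_x F(s)+G(s)\bigr)\,ds.
$$
The first term is handled by the homogeneous estimates above. For the $G$ contribution, Minkowski's inequality reduces matters to integrating the homogeneous estimates in $s$ against $\|G(s)\|_{L^2_x}$, which produces the $\|G\|_{L^1_t L^2_x}$ term on the right. For the $\partial_x F$ contribution, the key point is to commute the derivative with the propagator and use the \emph{dual} Kato smoothing estimate $\bigl\|\int_I e^{s\partial_x^3}\partial_x H(s)\,ds\bigr\|_{L^2_x}\lesssim\|H\|_{L^1_x L^2_t}$, which is the adjoint of the sharp Kato bound already invoked. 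Combining this with the homogeneous estimates applied to the resulting $L^2_x$ datum, and using the Christ--Kiselev lemma to pass from the global-in-time operator to the retarded (truncated) one wherever the target exponent pair is not an endpoint, yields the $\|F\|_{L^1_x L^2_t}$ term. One should be mildly careful that the Kato-smoothing output norm $L^\infty_x L^2_t$ itself is a Christ--Kiselev-exceptional case; here one instead argues directly by a $TT^*$ pairing that is already non-retarded in the relevant variable, or absorbs the truncation into the $L^1_x$ integration, so no genuine endpoint obstruction arises.

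The main obstacle — to the extent there is one in what is essentially a compendium of standard estimates — is bookkeeping the interplay between the two different scaling weights: the Kato gain of one full derivative $\partial_x u$ on the left versus the loss of one derivative in the forcing $\partial_x F$ on the right, and similarly matching the $|\partial_x|^{1/6}$ and $|\partial_x|^{-1/4}$ weights with the derivative-free $L^1_t L^2_x$ and derivative-losing $L^1_x L^2_t$ norms. These all balance under the scaling \eqref{scaling}, so the clean way to organize the proof is to check scale-invariance of every displayed inequality first, which both forces the correct exponents and confirms no stray powers have been dropped; after that each individual bound is a citation plus a Christ--Kiselev / Minkowski wrapper. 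I would therefore present the argument as: (1) list the homogeneous estimates with references; (2) record the dual Kato estimate; (3) one paragraph assembling the forced case via Duhamel, Minkowski, and Christ--Kiselev.
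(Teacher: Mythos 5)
The paper offers no proof of this lemma; it simply cites Kato and Kenig--Ponce--Vega, where these estimates are established. Your sketch is an accurate outline of how those references proceed: homogeneous smoothing/maximal/Strichartz bounds for $e^{-t\partial_x^3}$ by stationary phase and $TT^*$, the $L^5_xL^{10}_t$ bound by interpolating the $|\partial_x|^{-1/4}L^4_xL^\infty_t$ and $|\partial_x|^{1/6}L^6_{t,x}$ endpoints (exactly the interpolation the paper itself uses in Remark~\ref{R:wnj small}), the dual Kato bound for the $\partial_x F$ forcing, and a Christ--Kiselev/Minkowski assembly for the retarded Duhamel integral, with the $L^\infty_x L^2_t$--to--$L^1_x L^2_t$ pairing being a genuine CK endpoint that the KPV reference handles by a direct argument. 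No gap, and this is consistent with the paper's (purely citational) treatment.
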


\subsection{A linear profile decomposition}
In this subsection we record the linear profile decomposition statement from \cite{Shao:Airy}, which will lead to the reduction in
Theorem~\ref{T:reduct}. For a linear profile decomposition for the Schr\"odinger propagator, see \cite{BegoutVargas,
carles-keraani, keraani-h1, keraani-l2, Notes, merle-vega, Shao:NLS}.

We first recall the (non-compact) symmetries of the equation \eqref{eq:gKdV} which fix the initial surface $t=0$.

\begin{definition}[Symmetry group]\label{D:sym}
For any position $x_0\in \R$ and scaling parameter $\lambda > 0$, we define a unitary transformation $g_{x_0,\lambda}: L_x^2(\R) \to L_x^2(\R)$ by
$$
[g_{x_0, \lambda} f](x) :=  \lambda^{-\frac12} f\bigl( \lambda^{-1}(x-x_0) \bigr).
$$
Let $G$ denote the collection of such transformations.  For a function $u: I \times \R \to \R$, we define
$T_{g_{x_0,\lambda}} u: \lambda^3 I \times \R \to \C$ where $\lambda^3 I := \{ \lambda^3 t: t \in I \}$ by the formula
$$
[T_{g_{x_0, \lambda}} u](t,x) :=  \lambda^{-\frac12} u\bigl( \lambda^{-3}t, \lambda^{-1}(x-x_0)\bigr).
$$
Note that if $u$ is a solution to \eqref{eq:gKdV}, then $T_{g}u$ is a solution to \eqref{eq:gKdV} with initial data $g u_0$.
\end{definition}

\begin{remark} It is easy to verify that $G$ is a group and that the map $g \mapsto T_g$ is a homomorphism.
Moreover, $u \mapsto T_g u$ maps solutions to \eqref{eq:gKdV} to solutions with the same Strichartz size as $u$, that is,
$$
\|T_{g_{x_0, \lambda}}(u)\|_{L_x^5L_t^{10}(\lambda^3 I\times\R)}= \|u\|_{L_x^5L_t^{10}(I\times\R)}
$$
and
$$
\bigl\||\partial_x|^{1/6}T_{g_{x_0, \lambda}}(u)\bigr\|_{L^6_{t,x}(\lambda^3 I\times \R)} = \bigl\||\partial_x|^{1/6}u\bigr\|_{L^6_{t,x}(I\times \R)}.
$$
Furthermore, $u$ is a maximal-lifespan solution if and only if $T_g u$ is also a maximal-lifespan solution.
\end{remark}

We are now ready to record the linear profile decomposition for the Airy propagator.

\begin{lemma}[Airy linear profile decomposition, \cite{Shao:Airy}]\label{L:profile decomp}
Let $\{u_n\}_{n\geq 1}$ be a sequence of real-valued functions bounded in $L^2_x(\R)$.  Then, after passing to a subsequence if
necessary, there exist (possibly complex) functions $\{\phi^j\}_{j\geq 1}\subset L^2_x(\R)$, group elements $\gnj \in G$, frequency parameters
$\xi_n^j\in [0,\infty)$, and times $\tnj\in \R$ such that for all $J\geq 1$ we have the decomposition
\begin{equation*}
u_n= \sum_{1\le j\le J} g_n^je^{-t_n^j\partial_x^3}[\Re(e^{ix\xi_n^j\lambda_n^j}\phi^{j})]+ w_n^J,
\end{equation*}
where the parameters $\xi_n^j$ satisfy the following property: for any $1\leq j\leq J$ either $\xi_n^j= 0$ for all $n\geq 1$, or
$\xi_n^j\lambda_n^j\to \infty$ as $n\to \infty$.  Here, $w^J_n \in L^2_x(\R)$ is real-valued and its linear evolution has
asymptotically vanishing symmetric Strichartz norm, that is,
\begin{equation}\label{error vanish 1}
\lim_{J\to \infty}\limsup_{n\to \infty} \bigl\||\partial_x|^{1/6}e^{-t\partial_x^3}w_n^J\bigr\|_{L^6_{t,x}(\R\times\R)}=0.
\end{equation}
Moreover, the following orthogonality conditions are satisfied:
\begin{itemize}
\item For any $j\neq k$,
\begin{align}\notag
\lim_{n\to\infty}\Biggl[\frac{\lambda_n^j}{\lambda_n^k} & + \frac{\lambda_n^k}{\lambda_n^j} +
+ \sqrt{\lambda_n^j \lambda_n^k}\,\bigl|\xi_n^j-\xi_n^k\bigr|
+ \langle\lambda_n^j\xi_n^j\lambda_n^k\xi_n^k\rangle^{1/2} \biggl|\frac{(\lambda_n^j)^3t_n^j-(\lambda_n^k)^3t_n^k}{\bigl(\lambda_n^j\lambda_n^k\bigr)^{3/2}}\biggr| \\
\label{orthog 1}
& + (\lambda_n^j\lambda_n^k\bigr)^{-1/2} \biggl|x_n^j-x_n^k + \tfrac32
        \bigl[(\lambda_n^j)^3t_n^j-(\lambda_n^k)^3t_n^k\bigr] \bigl[(\xi_n^j)^2+(\xi_n^k)^2\bigr]\biggr|\Biggr]=\infty.
\end{align}
\item For any $J\geq 1$,
\begin{align} \label{orthog 2}
\lim_{n\to\infty} \Biggl[\|u_n\|^2_{L^2_x}-\sum_{1\le j\le J}\bigl\|\Re[e^{ix\xi_n^j\lambda_n^j}\phi^j]\bigr\|^2_{L^2_x} -
\|w_n^J\|^2_{L^2_x}\Biggr]=0.
\end{align}
\end{itemize}
\end{lemma}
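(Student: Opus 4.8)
The plan is to run the standard concentration--compactness construction of a linear profile decomposition; the one genuinely new ingredient is the family of frequency parameters $\xi_n^j$, which records the defect of compactness coming from data concentrated at arbitrarily high frequencies. In that regime $e^{-t\partial_x^3}$ mimics a Schr\"odinger propagator, and the associated non-compactness is \emph{not} a symmetry of \eqref{eq:gKdV} --- unlike the Galilei boosts in the NLS theory --- so the parameter must be carried by hand. The engine of the construction is an inverse Strichartz inequality, which I would formulate as follows: if $\|u_n\|_{L^2_x}\le A$ while $\liminf_{n}\bigl\||\partial_x|^{1/6}e^{-t\partial_x^3}u_n\bigr\|_{L^6_{t,x}}\ge\eps>0$, then, after passing to a subsequence, there exist group elements $g_n=g_{x_n,\lambda_n}\in G$, times $t_n\in\R$, and frequencies $\xi_n\in[0,\infty)$ with either $\xi_n\equiv0$ or $\xi_n\lambda_n\to\infty$, together with $\phi\in L^2_x$ obeying $\|\phi\|_{L^2_x}\gtrsim\eps\,(\eps/A)^{\theta}$ for some absolute $\theta>0$, such that
\[
e^{-i x\lambda_n\xi_n}\,\bigl[e^{t_n\partial_x^3}g_n^{-1}u_n\bigr]\ \rightharpoonup\ \phi \qquad\text{weakly in }L^2_x(\R).
\]

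Granting this, I would peel off the profiles by the familiar induction. Apply the inverse Strichartz inequality to $u_n$ to produce $(\phi^1,g_n^1,t_n^1,\xi_n^1)$, set
\[
w_n^1:=u_n-g_n^1 e^{-t_n^1\partial_x^3}\bigl[\Re\bigl(e^{ix\xi_n^1\lambda_n^1}\phi^1\bigr)\bigr],
\]
and iterate on $w_n^1$ to obtain $\phi^2$, and so on; if at some finite stage the residual symmetric Strichartz norm already vanishes, the decomposition terminates there. Weak convergence gives the one-step Pythagorean identity $\|u_n\|_{L^2_x}^2=\bigl\|\Re(e^{ix\xi_n^1\lambda_n^1}\phi^1)\bigr\|_{L^2_x}^2+\|w_n^1\|_{L^2_x}^2+o(1)$, whose iteration is \eqref{orthog 2}. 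Since $\|\phi^j\|_{L^2_x}^2\gtrsim\eps_j^{2}(\eps_j/A)^{2\theta}$, where $\eps_j$ denotes the residual Strichartz norm of $w_n^{j-1}$, the square-summability provided by \eqref{orthog 2} forces $\eps_j\to0$, which is \eqref{error vanish 1}. The parameter orthogonality \eqref{orthog 1} I would establish by the usual contradiction argument: were some combination of the displayed quantities to stay bounded along a subsequence for a pair $j\neq k$, one could pass to a further subsequence along which all of them converge and check that, after undoing the symmetry $g_n^k$, the time translation, and the modulation attached to the $k$-th bubble, the $j$-th profile term already contributes a nonzero weak limit --- contradicting the construction of $\phi^k$ as a weak limit of a sequence built from $w_n^{k-1}$.

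The heart of the matter is the inverse Strichartz inequality, which I would derive from a refined bilinear Airy--Strichartz estimate. Under the change of variables $(\xi_1,\xi_2)\mapsto(\xi_1+\xi_2,\,\xi_1^3+\xi_2^3)$ --- whose Jacobian $3|\xi_1^2-\xi_2^2|$ is comparable to the square of the larger frequency whenever $|\xi_1|$ and $|\xi_2|$ lie at distinct dyadic scales --- one gains a power of that frequency in the bilinear norm $\bigl\|(e^{-t\partial_x^3}f_{\tau_1})(e^{-t\partial_x^3}f_{\tau_2})\bigr\|_{L^2_{t,x}}$ for frequency-separated pieces. Squaring $\||\partial_x|^{1/6}e^{-t\partial_x^3}f\|_{L^6_{t,x}}$, decomposing the product into frequency pieces, and summing the off-diagonal contributions with this bilinear bound upgrades the plain Strichartz inequality to one in which $\|f\|_{L^2_x}$ is partly replaced by the supremum, over all frequency intervals $\tau$ of arbitrary center and length, of a normalized $e^{-t\partial_x^3}f_\tau$ on a spacetime box adapted to $\tau$ (this is the refined Strichartz estimate of \cite{Shao:Airy}). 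Feeding $f=u_n$ into this inequality and invoking $\eps\le\||\partial_x|^{1/6}e^{-t\partial_x^3}u_n\|_{L^6_{t,x}}$ together with $\|u_n\|_{L^2_x}\le A$ locates, for each $n$, a frequency interval $\tau_n$ and a box about a spacetime point $(t_n,x_n)$ on which $e^{-t\partial_x^3}(u_n)_{\tau_n}$ is large; translating by $(x_n,t_n)$, rescaling by the spatial width $\lambda_n$ of the box, and demodulating by $e^{-ix\lambda_n\xi_n}$ --- with $\xi_n$ the center of $\tau_n$ when $\tau_n$ is far from the origin compared with its length, and $\xi_n=0$ otherwise --- produces a sequence bounded in $L^2_x$ along a subsequence of which I take the weak limit $\phi$, the normalization built into the refined estimate guaranteeing $\|\phi\|_{L^2_x}\gtrsim\eps(\eps/A)^\theta$.

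The step I expect to be the main obstacle is not the bilinear estimate itself --- that is classical oscillatory-integral analysis --- but the bookkeeping imposed by the frequency parameter. One must arrange the inverse Strichartz inequality so that it cleanly separates the critical-frequency case (absorbed into the scaling, $\xi_n=0$) from the super-critical one ($\xi_n\lambda_n\to\infty$), and then verify the rather baroque orthogonality relations \eqref{orthog 1}. The presence there of the group-velocity correction $\tfrac32\bigl[(\lambda_n^j)^3t_n^j-(\lambda_n^k)^3t_n^k\bigr]\bigl[(\xi_n^j)^2+(\xi_n^k)^2\bigr]$ to the difference of spatial centers, and of the weight $\langle\lambda_n^j\xi_n^j\lambda_n^k\xi_n^k\rangle^{1/2}$ on the time separation, both reflect the Schr\"odinger-like behavior of high-frequency Airy wave packets: near frequency $\xi_0$ one writes $(\xi_0+\eta)^3=\xi_0^3+3\xi_0^2\eta+3\xi_0\eta^2+O(\eta^3)$, the linear term producing translation at speed $\sim\xi_0^2$ and the quadratic term a Schr\"odinger-type dispersion on the rescaled time variable $\sim\xi_0 t$. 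Disentangling these two effects, and tracking how each transforms under all three families of parameters, is where the analysis is genuinely more delicate than in the mass-critical NLS setting.
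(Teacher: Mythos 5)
The paper itself does not prove this lemma; it is cited verbatim from \cite{Shao:Airy}, so there is no internal proof to compare against. Your sketch does accurately reproduce the strategy of that reference: a refined Strichartz estimate obtained from a bilinear Airy estimate via a Whitney-type frequency decomposition, an inverse Strichartz inequality extracting a weak-limit profile after an appropriate scaling, translation, time-translation, and demodulation, and then the standard iteration with Pythagorean mass decoupling and parameter-orthogonality-by-contradiction. You also correctly identify the two genuine delicacies relative to the NLS case --- the dichotomy between $\xi_n\equiv0$ and $\xi_n\lambda_n\to\infty$ (which must be arranged in the inverse Strichartz step rather than absorbed into a group symmetry), and the Schr\"odinger-like wave-packet corrections that produce the $\tfrac32[\cdots][(\xi_n^j)^2+(\xi_n^k)^2]$ shift and the $\langle\lambda_n^j\xi_n^j\lambda_n^k\xi_n^k\rangle^{1/2}$ weight in \eqref{orthog 1}. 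One detail glossed over is the bookkeeping needed to keep $w_n^J$ real-valued and to justify \eqref{orthog 2} in the form stated: when $\xi_n^j\lambda_n^j\to\infty$, the asymptotic orthogonality of $\Re(e^{ix\xi_n^j\lambda_n^j}\phi^j)$ against the remainder requires a Riemann--Lebesgue argument (the cross term oscillates rapidly), not just weak convergence; this is handled in \cite{Shao:Airy} but deserves explicit mention in any write-up.
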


\begin{remark}\label{R:wnj small}
By analytic interpolation together with Lemma~\ref{L:linear estimates} and \eqref{error vanish 1}, we obtain that the linear evolution
of the error term $w_n^J$ also vanishes asymptotically in the Strichartz space $L_x^5 L_t^{10}$.  Indeed,
\begin{align*}
\lim_{J\to\infty}\limsup_{n\to\infty}\|&e^{-t\partial_x^3}w_n^J\|_{L^5_xL^{10}_t(\R \times \R)}\notag\\
&\lesssim \lim_{J\to\infty}\limsup_{n\to\infty}\bigl\||\partial_x|^{-1/4}e^{-t\partial_x^3}w_n^J\bigr\|_{L^4_xL^\infty_t(\R \times \R)}^{2/5}
    \bigl\||\partial_x|^{1/6}e^{-t\partial_x^3}w_n^J\bigr\|_{L_{t,x}^6(\R \times \R)}^{3/5} \notag\\
&\lesssim \|w_n^J\|_{L_x^2}^{2/5} \lim_{J\to\infty}\limsup_{n\to\infty}\bigl\||\partial_x|^{1/6}e^{-t\partial_x^3}w_n^J\bigr\|_{L_{t,x}^6(\R \times \R)}^{3/5}
=0.
\end{align*}
\end{remark}

Our next lemma shows that divergence of parameters in the sense of \eqref{orthog 1} gives decoupling of \emph{nonlinear} profiles.
Note that when $\xi_n\lambda_n\to\infty$, the structure of the nonlinear profile is dictated by Theorem~\ref{T:NLS emb}.

\begin{lemma}[Decoupling for the nonlinear profiles]\label{L:decoupling}
Let $\psi^j$ and $\psi^k$ be functions in $C^\infty_c(\R\times\R)$.  Given sequences of parameters that diverge in the sense of \eqref{orthog 1}, we have
\begin{align}\label{decouple:slow slow}
\Bigl\|T_{g^j_n}\bigl[\psi^j(t+t_n^j)\bigr] \, T_{g^k_n}\bigl[\psi^k(t+t_n^k)\bigr]\Bigr\|_{L^{5/2}_x L^5_t} \to 0
\end{align}
in the case $\xi_n^j\equiv\xi_n^k\equiv 0$, while
\begin{gather}\label{decouple:fast slow}
\Bigl\|T_{g^j_n}\bigl[\psi^j\bigl(3\lambda^j_n\xi_n^j(t+t_n^j),x+3(\lambda^j_n\xi_n^j)^2(t+t_n^j)\bigr)\bigr] \, T_{g^k_n}\bigl[\psi^k(t+t_n^k)\bigr]\Bigr\|_{L^{5/2}_x L^5_t} \to 0
\end{gather}
when $\xi_n^j\lambda_n^j\to\infty$ and $\xi_n^k\equiv 0$. Lastly,
\begin{align}\notag
\Bigl\|T_{g^j_n}\bigl[\psi^j\bigl(3\lambda^j_n\xi_n^j(t+t_n^j),x&+3(\lambda^j_n\xi_n^j)^2(t+t_n^j)\bigr)\bigr] \times \\
\label{decouple:fast fast}
    &T_{g^k_n}\bigl[\psi^k\bigl(3\lambda^k_n\xi_n^k(t+t_n^k),x+3(\lambda^k_n\xi_n^k)^2(t+t_n^k)\bigr)\bigr]\Bigr\|_{L^{5/2}_x L^5_t} \to 0
\end{align}
when $\xi_n^j\lambda_n^j\to\infty$ and $\xi_n^k\lambda_n^k\to\infty$.
\end{lemma}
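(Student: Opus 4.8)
The plan is to prove all three decoupling statements by the same mechanism: after unfolding the definition of $T_{g_n^j}$, each factor is a smooth, compactly supported bump living on a spacetime region whose location, scale, and (in the oscillatory cases) internal shearing are governed by the parameters $(\lambda_n^j,\xi_n^j,t_n^j,x_n^j)$. A product of two such bumps has $L^{5/2}_xL^5_t$ norm that can be estimated by the measure of the overlap of their supports (times fixed powers of the amplitudes, which are themselves fixed powers of the $\lambda_n^j$), and the orthogonality condition \eqref{orthog 1} forces that overlap to shrink to zero. So the argument is really a change-of-variables bookkeeping exercise: identify, in each of the three cases, the affine (or affine-plus-shear) map taking the unit cube on which $\psi^j$ is supported to the region where $T_{g_n^j}[\psi^j(\cdots)]$ is supported, and check that \eqref{orthog 1} is exactly the statement that these regions separate.

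First I would dispose of \eqref{decouple:slow slow}. Here $T_{g_n^j}[\psi^j(t+t_n^j)](t,x)=(\lambda_n^j)^{-1/2}\psi^j\bigl((\lambda_n^j)^{-3}t+t_n^j,\ (\lambda_n^j)^{-1}(x-x_n^j)\bigr)$, supported where $|(\lambda_n^j)^{-1}(x-x_n^j)|\lesssim 1$ and $|(\lambda_n^j)^{-3}t+t_n^j|\lesssim 1$, i.e. on a box of spatial width $\sim\lambda_n^j$ and temporal width $\sim(\lambda_n^j)^3$. By symmetry (relabelling $j\leftrightarrow k$ and rescaling) one may assume $\lambda_n^j\lesssim\lambda_n^k$. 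If $\lambda_n^j/\lambda_n^k\to0$ the amplitude $(\lambda_n^j\lambda_n^k)^{-1/2}\int\!\!\int$ over the overlap already decays after accounting for the dimensional balance of the $L^{5/2}_xL^5_t$ norm; if instead $\lambda_n^j\sim\lambda_n^k$, then one of the remaining two terms in \eqref{orthog 1}—the spatial-center separation $(\lambda_n^j\lambda_n^k)^{-1/2}|x_n^j-x_n^k+\cdots|$ (which reduces to $(\lambda_n^j)^{-1}|x_n^j-x_n^k|$ when $\xi\equiv0$) or the time separation—must diverge, and divergence of either one drives the supports apart in the corresponding variable, whence the product vanishes identically for $n$ large. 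A short computation with Hölder in $x$ then in $t$, or simply the observation that the product is eventually $0$, finishes it.

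Next, \eqref{decouple:fast slow} and \eqref{decouple:fast fast}. The new feature is that $T_{g_n^j}[\psi^j(3\lambda_n^j\xi_n^j(t+t_n^j),\,x+3(\lambda_n^j\xi_n^j)^2(t+t_n^j))]$ is supported on a \emph{sheared} slab: in the original $(t,x)$ variables the constraint $|3\lambda_n^j\xi_n^j((\lambda_n^j)^{-3}t+t_n^j)|\lesssim1$ pins the time variable to an interval of length $\sim(\lambda_n^j)^3/(\lambda_n^j\xi_n^j)=(\lambda_n^j)^2/\xi_n^j$ (much shorter than before, since $\lambda_n^j\xi_n^j\to\infty$), while the second argument confines $x$ to an interval of width $\sim\lambda_n^j$ around a point that now \emph{moves with speed} $\sim\lambda_n^j(\lambda_n^j\xi_n^j)^2$ as $t$ varies—this is precisely the transport term that appears inside the absolute value in the last line of \eqref{orthog 1}. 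In the fast–slow case, the slow factor lives on a wide, slowly-varying box while the fast factor is a thin fast-moving sliver; their overlap in time is controlled by whichever of the time-separation term $\langle\cdots\rangle^{1/2}|\cdots|$ or the sheared spatial-separation term in \eqref{orthog 1} diverges, and I would again reduce (after $\lambda_n^j\sim\lambda_n^k$ and comparable frequencies, else amplitude decay wins) to a genuine separation of supports. In the fast–fast case the two sheared slabs have transport speeds $\propto(\lambda_n^j\xi_n^j)^2$ and $(\lambda_n^k\xi_n^k)^2$; when the frequency product $\lambda_n^j\xi_n^j\lambda_n^k\xi_n^k$ is bounded this is the slow-slow analysis in disguise, and when it is unbounded the terms $\sqrt{\lambda_n^j\lambda_n^k}|\xi_n^j-\xi_n^k|$ and the weighted time/space differences in \eqref{orthog 1} are exactly the ratios measuring whether the two slivers can be simultaneously nonzero. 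In all subcases I would make a linear change of variables that normalizes the $j$-th bump to the unit cube, push the $k$-th bump through that change of variables, and observe that its support recedes to infinity (or its amplitude degenerates), giving \eqref{decouple:fast slow} and \eqref{decouple:fast fast}.

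The main obstacle is the fast–fast case \eqref{decouple:fast fast}: there the two factors both live on thin sheared slabs, so one cannot simply say "one box is much bigger than the other," and one must track \emph{all five} terms in \eqref{orthog 1} simultaneously and verify that their collective divergence is equivalent to the separation (in the sheared coordinates adapted to one of the profiles) of the two supports. Concretely, the delicate bookkeeping is to see that after the affine-plus-shear change of variables sending the $j$-th slab to a fixed cube, the $k$-th slab's position is an affine function of $n$ whose coefficients are built precisely from the quantities $\lambda_n^j/\lambda_n^k$, $\sqrt{\lambda_n^j\lambda_n^k}|\xi_n^j-\xi_n^k|$, the weighted time difference, and the weighted (sheared) space difference, so that \eqref{orthog 1} becomes the statement that this affine image escapes every compact set. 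Once that identification is in hand, each of the three displays follows from the trivial estimate $\|fg\|_{L^{5/2}_xL^5_t}\le\|f\|_{L^5_xL^{10}_t}\|g\|_{L^5_xL^{10}_t}$ together with the fact that the product is eventually the zero function.
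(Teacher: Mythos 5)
Your plan is sound for the slow--slow case \eqref{decouple:slow slow}: after reducing to $\lambda_n^j\sim\lambda_n^k$ by rescaling, divergence of the time or spatial center in \eqref{orthog 1} really does separate the two supports, so the product is eventually zero and the crude H\"older estimate finishes it. The gap is in your final reduction, which asserts that all three conclusions ``follow from\dots\ the fact that the product is eventually the zero function.'' This is false in the oscillatory cases. Take \eqref{decouple:fast fast} with $\lambda_n^j=\lambda_n^k=1$, $t_n^j=t_n^k=0$, $x_n^j=x_n^k=0$, $\xi_n^j=n$, $\xi_n^k=2n$: condition \eqref{orthog 1} holds via $\sqrt{\lambda_n^j\lambda_n^k}\,|\xi_n^j-\xi_n^k|=n\to\infty$, yet both parallelograms $R_n^j$ and $R_n^k$ contain a fixed neighbourhood of the origin for every $n$, so no affine-plus-shear change of variables can send one support to infinity, and the amplitudes $(\lambda_n^j)^{-1/2}\sim(\lambda_n^k)^{-1/2}$ do not degenerate either.

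What actually produces the decay in that subcase is quantitative and measure-theoretic: the intersection $R_n^j\cap R_n^k$ is a thin sliver of area $\lesssim\lambda_n^j\lambda_n^k\big/\big[(\xi_n^j+\xi_n^k)|\xi_n^j-\xi_n^k|\big]$ (the Jacobian of the two shears) lying inside an $x$-footprint of width $\lesssim\min\{(\lambda_n^j)^2\xi_n^j,(\lambda_n^k)^2\xi_n^k\}$. Bounding each profile by a multiple of the characteristic function of its parallelogram and applying Cauchy--Schwarz to the \emph{spatial} integral in $\bigl[\mathrm{LHS}\bigr]^{5/2}=\int\bigl(\int(\lambda_n^j\lambda_n^k)^{-5/2}\chi_{R_n^j}\chi_{R_n^k}\,dt\bigr)^{1/2}dx$ combines these two geometric inputs into $\bigl[\mathrm{LHS}\bigr]^{10}\lesssim\bigl(\lambda_n^j\lambda_n^k|\xi_n^j-\xi_n^k|^2\bigr)^{-1}$, which is exactly how the frequency-separation term in \eqref{orthog 1} is exploited; it also forces $\xi_n^j\sim\xi_n^k$ in the surviving subcases, after which a support-separation picture for the remaining time and space terms becomes available. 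The H\"older bound $\|fg\|_{L^{5/2}_xL^5_t}\le\|f\|_{L^5_xL^{10}_t}\|g\|_{L^5_xL^{10}_t}$ alone cannot detect any of this, because each factor is $O(1)$ in $L^5_xL^{10}_t$ uniformly in $n$. The fast--slow case \eqref{decouple:fast slow} has the same flavour: with $\lambda_n^j\sim\lambda_n^k$, decay follows from $\xi_n^j\lambda_n^j\to\infty$ by a quantitative overlap estimate, not by driving the supports apart. So the Cauchy--Schwarz-on-the-overlap mechanism is a genuine missing ingredient, not bookkeeping.
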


\begin{proof}
By moving the scaling symmetry onto one of the profiles (i.e., by changing variables in each of the space and time integrals) one can quickly
obtain convergence to zero unless $\lambda^j_n\sim\lambda^k_n$.  In the case of a rapidly moving profile, one should note that for any
$\vartheta\in C^\infty_c(\R\times\R)$,
$$
\bigl\| \vartheta(3a_n t,x+3a_n^2 t) \bigr\|_{L^{5/2}_x L^5_t} \lesssim 1
$$
independent of the growth of $a_n$.  This follows from the fact that
\begin{equation}\label{E:interval len}
x\mapsto \bigl\| \vartheta(3a_n t,x+3a_n^2 t) \bigr\|_{L^5_t} \quad\text{is supported on an interval of length $O(1+|a_n|)$.}
\end{equation}

With $\lambda^j_n\sim\lambda^k_n$, equation \eqref{decouple:fast slow} follows very quickly; one merely writes down the rather lengthy formula
and utilizes the fact that $\xi_n^j\lambda_n^j\to\infty$.  In the case of \eqref{decouple:slow slow}, one then sees that divergence of the spatial
or temporal center parameters, in the sense of \eqref{orthog 1}, eventually separates the supports of the two profiles.  Further details
can be found in a number of prior publications, including \cite{keraani-l2,Notes}.

We now turn our attention to \eqref{decouple:fast fast}.  The general scheme mimics that for \eqref{decouple:slow slow}; however,
everything becomes extremely messy without one small trick.  For this reason, we work through a few of the details.

Bounding $\psi^j$ and $\psi^k$ by (multiples of) the characteristic function of a suitably large square, we see that
\begin{equation}
\bigl[ \, \text{LHS\eqref{decouple:fast fast}} \, \bigr]^{5/2}
    \label{E:preCS}
    \lesssim \int\biggl(\int (\lambda_n^j\lambda_n^k)^{-5/2} \chi_{R_n^j}(t,x) \chi_{R_n^k}(t,x)  \,dt\biggr)^{1/2} \, dx
\end{equation}
where $R_n^j$ is the parallelogram
$$
R_n^j = \bigl\{ (t,x) : \bigl|\xi_n^j[t+(\lambda_n^j)^3t_n^j]\bigr| \lesssim (\lambda_n^j)^2
    \text{ and } \bigl|x-x_n^j+3(\xi_n^j)^2[t+(\lambda_n^j)^3t_n^j]\bigr| \lesssim \lambda_n^j \bigr\}
$$
and similarly for $R_n^k$.

The next step is to apply the Cauchy--Schwarz inequality to the spatial integral in \eqref{E:preCS}.  Before doing so, let us gather some
information that will allow us to bound what results.  First, by changing variables according to
$$
    v=x-x_n^j+3(\xi_n^j)^2[t+(\lambda_n^j)^3t_n^j] \quad\text{and}\quad    w=x-x_n^k+3(\xi_n^k)^2[t+(\lambda_n^k)^3t_n^k],
$$
we see that
\begin{equation}\label{Area of intersection}
\text{Area}\bigl( R_n^j \cap R_n^k \bigr)
    \lesssim \int_{|v|\lesssim\lambda_n^j} \int_{|w|\lesssim\lambda_n^k} \frac{dw\,dv}{(\xi_n^j+\xi_n^k)|\xi_n^j-\xi_n^k|}
    \lesssim \frac{\lambda_n^j \lambda_n^k}{(\xi_n^j+\xi_n^k)|\xi_n^j-\xi_n^k|},
\end{equation}
where the denominator originates from the Jacobian factor.  On the other hand,
\begin{equation}\label{Width of intersection}
\bigl| \bigl\{ x  : \bigl\| \chi_{R_n^j}(t,x) \chi_{R_n^k}(t,x) \bigr\|_{L^{5}_t} \neq 0 \bigr\}\bigr|
    \lesssim \min\bigl\{(\lambda_n^j)^2\xi_n^j,(\lambda_n^k)^2\xi_n^k\},
\end{equation}
just as in \eqref{E:interval len}.  Thus, combining \eqref{E:preCS}, \eqref{Area of intersection}, and \eqref{Width of intersection} with the Cauchy-Schwarz
inequality and the fact that we may assume $\lambda_n^j\sim\lambda_n^k$ yields
\begin{equation}
\bigl[ \, \text{LHS\eqref{decouple:fast fast}} \, \bigr]^{10}
    \lesssim  \bigl( \lambda_n^j \lambda_n^k |\xi_n^j-\xi_n^k|^2 \bigr)^{-1}.
\end{equation}
This shows convergence to zero unless
\begin{equation}\label{E:xi difference}
\sqrt{\smash{\lambda_n^j \lambda_n^k} \vrule width 0mm depth 0mm height 1.85ex } \, \bigl|\xi_n^j-\xi_n^k\bigr| \lesssim  1
\end{equation}
and is the origin of the second term in \eqref{orthog 1}.

It is now not difficult to deal with the remaining two terms in \eqref{orthog 1}; however, it is useful to observe that
\eqref{E:xi difference} and $\lambda_n^j\xi_n^j\to\infty$ imply $\xi_n^j\sim\xi_n^k$.  Indeed, the ratio converges to one.
\end{proof}

%\begin{center}
%\begin{picture}(220,100)(0,0)
%%edges of picture
%\put(0,0){\line(1,0){220}}
%\put(0,100){\line(1,0){220}}
%%parallelogram
%\put(125,40){\line(1,0){40}}  % bottom
%\put(65, 70){\line(1,0){40}}  % top
%\put(125, 40){\line(-2,1){60}}  % left
%\put(165,40){\line(-2,1){60}}  % right
%%dimensions
%%%% bottom
%\put(120, 30){\vector(1,0){40}}
%\put(120, 25){\line(0,1){10}}
%\put(160,30){\vector(-1,0){40}}
%\put(160,25){\line(0,1){10}}
%\put(140,20){\hbox to 0mm{\hss$\sim\lambda$\hss}}
%%%% right
%\put(180,40){\vector(0,1){30}}
%\put(175,40){\line(1,0){10}}
%\put(180,70){\vector(0,-1){30}}
%\put(175,70){\line(1,0){10}}
%\put(187,51){$\sim\lambda^2\xi^{-1}$}
%%%% speed
%\put(25,35){Speed$=3\xi^{2}$\put(2,3){\vector(2,1){15}}}
%%axes
%\put(10,10){\vector(1,0){20}}\put(32,5){$x$}
%\put(10,10){\vector(0,1){20}}\put(5,30){$t$}
%\end{picture}
%\end{center}

\begin{lemma}[Schr\"odinger maximal function, \cite{KPV:1}]\label{L:Gulkan}
Given a solution $u:I\times\R\to \C$ to
$$
iu_t = u_{xx} + G
$$
we have
$$
\| u \|_{L^4_xL^\infty_t(I\times\R)} \lesssim \bigl\| |\partial_x|^{1/4} u(t_0) \bigr\|_{L^2_x(\R)} + \bigl\| |\partial_x|^{1/4} G \bigr\|_{L^1_tL^2_x(I\times\R)}
$$
for any $t_0\in I$.
\end{lemma}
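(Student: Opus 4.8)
The plan is to deduce the bound from the classical \emph{global} maximal-function estimate for the free one-dimensional Schr\"odinger group,
\begin{equation}\label{E:KR}
\bigl\| e^{-it\partial_x^2} f \bigr\|_{L_x^4L_t^\infty(\R\times\R)} \lesssim \bigl\| |\partial_x|^{1/4} f \bigr\|_{L_x^2(\R)},
\end{equation}
of Kenig--Ruiz type, which is recorded in the reference cited in the statement and is known to be sharp in both the number of derivatives and the exponents. Granting \eqref{E:KR}, I would invoke the Duhamel formula: a solution of $iu_t=u_{xx}+G$ on $I$ satisfies
$$
u(t)=e^{-i(t-t_0)\partial_x^2}u(t_0)-i\int_{t_0}^{t}e^{-i(t-s)\partial_x^2}G(s)\,ds\qquad(t\in I),
$$
and since $e^{+it\partial_x^2}$ obeys the same bound as $e^{-it\partial_x^2}$ (conjugate), it suffices to estimate the linear term and the Duhamel term separately in $L_x^4L_t^\infty(I\times\R)$.

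For the linear term one merely restricts the time integration to $I$ and applies \eqref{E:KR} with $f=u(t_0)$. The Duhamel term needs one small observation: because we want only an $L^\infty$ bound in time, the retarded integral may be replaced by the full integral over $I$ at no cost, since for fixed $x$
$$
\sup_{t\in I}\Bigl|\int_{t_0}^{t}\bigl[e^{-i(t-s)\partial_x^2}G(s)\bigr](x)\,ds\Bigr|\le\int_I\sup_{t\in I}\bigl|\bigl[e^{-i(t-s)\partial_x^2}G(s)\bigr](x)\bigr|\,ds.
$$
Taking the $L_x^4$ norm, moving it inside the $s$-integral by Minkowski's inequality, and then (after translating $t\mapsto t-s$) applying \eqref{E:KR} with $f=G(s)$ for each $s$ yields
$$
\Bigl\|\int_{t_0}^{t}e^{-i(t-s)\partial_x^2}G(s)\,ds\Bigr\|_{L_x^4L_t^\infty(I\times\R)}\le\int_I\bigl\|e^{-it\partial_x^2}G(s)\bigr\|_{L_x^4L_t^\infty(\R\times\R)}\,ds\lesssim\int_I\bigl\||\partial_x|^{1/4}G(s)\bigr\|_{L_x^2}\,ds,
$$
which is exactly $\||\partial_x|^{1/4}G\|_{L_t^1L_x^2(I\times\R)}$. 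Adding the two bounds proves the lemma; note that, in contrast with the mixed-norm Strichartz inequalities, no Christ--Kiselev argument is needed, precisely because the $L_t^\infty$ structure makes the crude replacement of the retarded integral lossless.

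It remains to establish \eqref{E:KR}, which is where the real content lies; I would prove it by a $TT^*$ argument. After conjugating by $|\partial_x|^{1/4}$, \eqref{E:KR} is equivalent to the boundedness from $L_x^{4/3}L_t^1$ into $L_x^4L_t^\infty$ of the operator $F\mapsto\int_\R|\partial_x|^{-1/2}e^{-i(t-s)\partial_x^2}F(s)\,ds$, whose integral kernel is $K(t-s,x-y)$ with, up to a harmless constant,
$$
K(\tau,z)=\int_\R e^{i\tau\xi^2+iz\xi}\,|\xi|^{-1/2}\,d\xi.
$$
Parabolic scaling gives $K(\tau,z)=|z|^{-1/2}\,\Phi(\tau|z|^{-2})$ for a fixed profile $\Phi$, so the whole matter reduces to the uniform bound $\sup_a|\Phi(a)|<\infty$, equivalently $\sup_\tau|K(\tau,z)|\lesssim|z|^{-1/2}$; since this bound is independent of $\tau$, one deduces
$$
\Bigl|\int\!\!\int K(t-s,x-y)F(s,y)\,dy\,ds\Bigr|\lesssim\bigl(|x|^{-1/2}*\|F(\cdot,y)\|_{L_t^1}\bigr)(x)
$$
uniformly in $t$, and the required $L_x^{4/3}\to L_x^4$ mapping then follows from the one-dimensional Hardy--Littlewood--Sobolev inequality, whose exponents obey $\tfrac14=\tfrac34-\tfrac12$. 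The main obstacle is therefore the oscillatory-integral estimate on $K$: one must simultaneously control the singularity of $|\xi|^{-1/2}$ at the origin and its slow decay at infinity --- for instance by a dyadic decomposition in $|\xi|$ and van der Corput's lemma on each dyadic piece --- and check that these contributions sum to the asserted $|z|^{-1/2}$ uniformly in $\tau$.
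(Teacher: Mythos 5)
Your proof is correct and follows essentially the same route as the paper: the paper's entire proof consists of the remark that the homogeneous case $G=0$ is a $TT^*$ argument (or is cited from \cite{KPV:1}) and that ``$G$ can be inserted a posteriori by a simple application of Minkowski's inequality.'' Your proposal simply fills in the details of both steps, including the useful observation that the $L^\infty_t$ structure allows the retarded Duhamel integral to be replaced by the full integral, so no Christ--Kiselev argument is needed.
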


\begin{proof}
When $G=0$, this can be proved by a simple $TT^*$ argument; however, the result seems to appear for the first time in \cite{KPV:1},
which considers a much more general setup.  We note that $G$ can be inserted a posteriori by a simple application of Minkowski's inequality.
\end{proof}

%%%%%%%%%%%%%%%%%%%%%%%%%%%%%%%%%%%%%%%%%%%%%%%%%%%%%%%%%%%%%%%%%%%%%%%%%%%%%%%%%%%%%%%%%%%
%
%
%                                   Section
%
%
%%%%%%%%%%%%%%%%%%%%%%%%%%%%%%%%%%%%%%%%%%%%%%%%%%%%%%%%%%%%%%%%%%%%%%%%%%%%%%%%%%%%%%%%%%%

\section{Stability theory}\label{S:stab}

An important part of the local well-posedness theory is the stability theory.  By stability, we mean the following property:
Given an \emph{approximate} solution $\tilde u$ to \eqref{eq:gKdV} in the sense that
\begin{equation}\label{kdv approx}
\begin{cases}
(\partial_t + \partial_{xxx})\tilde u = \partial_x\bigl(\tilde u^5\bigr) + e\\
\tilde u(0,x)= \tilde u_0(x)
\end{cases}
\end{equation}
with $e$ small in a suitable sense and the initial data $\tilde u_0$ close to $u_0$, then there exists a
\emph{genuine} solution $u$ to \eqref{eq:gKdV} which stays very close to $\tilde u$ in critical spacetime norms.  The question of
continuous dependence of the solution upon the initial data corresponds to the case $e=0$.

Although stability is a local question, it has played an important role in all existing treatments of the global well-posedness
problem for the nonlinear Schr\"odinger equation at critical regularity.  It has also proved useful in the treatment
of local and global questions for more exotic nonlinearities \cite{Matador, xiaoyi:matador}.  As in previous work, the stability result is
an essential tool for extracting a minimal-mass blowup solution.

\begin{theorem}[Long-time stability for the mass-critical gKdV]\label{T:stab}
Let $I$ be a time interval containing zero and let $\tilde u$ be a solution to \eqref{kdv approx} on $I\times\R$ for some function $e$.  Assume that
$$
\|\tilde u\|_{L^\infty_tL^2_x(I\times\R)}\le M, \quad \|\tilde u\|_{L^5_xL_t^{10}(I\times\R)}\le L
$$
for some positive constants $M$ and $L$.  Let  $u_0$ be such that
$$
\|u_0-\tilde u_0\|_{L^2_x}\le M'
$$
for some positive constant $M'$.  Assume also the smallness conditions
\begin{align*}
\bigl\|e^{-t\partial_x^3}(u_0-\tilde u_0)\bigr\|_{L^5_xL_t^{10}(I\times \R)}&\le \eps\\
\bigl\||\partial_x|^{-1}e\bigr\|_{L^1_xL^2_t(I\times \R)} &\le \eps
\end{align*}
for some small $0<\eps<\eps_1=\eps_1(M,M',L)$.  Then there exists a solution $u$ to \eqref{eq:gKdV} on $I\times \R$ with initial
data $u_0$ at time $t=0$ satisfying
\begin{align*}
\|u-\tilde u\|_{L^5_xL_t^{10}(I\times\R)} &\leq C(M,M',L) \eps\\
\bigl\|u^5-\tilde u^5\bigr\|_{L^1_xL^2_t(I\times \R)} &\leq C(M,M',L) \eps\\
\|u-\tilde u\|_{L_t^\infty L_x^2(I\times\R)} + \bigl\||\partial_x|^{1/6}(u-\tilde u)\bigr\|_{L_{t,x}^6(I\times\R)}&\leq C(M,M',L)\\
\bigl\|\partial_x u\bigr\|_{L^\infty_xL^2_t(I\times \R)} + \bigl\||\partial_x|^{-1/4}u\bigr\|_{L^4_xL^\infty_t(I\times \R)}
+ \|u\|_{L_t^\infty L_x^2(I\times\R)} \\
+\, \bigl\||\partial_x|^{1/6}u\bigr\|_{L_{t,x}^6(I\times\R)}+ \|u\|_{L^5_xL_t^{10}(I\times\R)}&\leq C(M,M',L).
\end{align*}
\end{theorem}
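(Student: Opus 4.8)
The plan is to follow the standard two-step scheme for long-time stability of mass-critical dispersive equations (cf.\ \cite{tvz:cc, KTV} and the references therein): first prove a \emph{short-time} stability statement, valid when $\|\tilde u\|_{L^5_xL^{10}_t}$ is small, and then remove this restriction by partitioning $I$ into finitely many intervals on which it applies. The only analytic ingredient is Lemma~\ref{L:linear estimates}; the point particular to gKdV is that the Kato smoothing estimate contained there gains a full derivative, which exactly compensates the derivative in the nonlinearity $\partial_x(u^5)$. With that available, the argument is structurally identical to the one for the mass-critical NLS.

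For the short-time step I would assume in addition that $\|\tilde u\|_{L^5_xL^{10}_t(I\times\R)}\le\delta_0$ for a small absolute constant $\delta_0$ and prove the conclusion with $\eps_1$ replaced by an absolute constant $\eps_0$. Set $w:=u-\tilde u$, which must solve
\[
(\partial_t+\partial_{xxx})w=\partial_x\bigl((\tilde u+w)^5-\tilde u^5\bigr)-e,\qquad w(0)=u_0-\tilde u_0,
\]
and construct $w$ by a contraction mapping in the space carrying the left-hand norms of Lemma~\ref{L:linear estimates}. Although $\|w(0)\|_{L^2_x}\le M'$ may be large, the hypothesis $\|e^{-t\partial_x^3}(u_0-\tilde u_0)\|_{L^5_xL^{10}_t(I\times\R)}\le\eps$ keeps the free evolution of $w(0)$ small in $L^5_xL^{10}_t$; combining this with Lemma~\ref{L:linear estimates} (the term $-e$ contributing $O(\||\partial_x|^{-1}e\|_{L^1_xL^2_t})=O(\eps)$) gives
\[
\|w\|_{L^5_xL^{10}_t}\lesssim\eps+\bigl\|(\tilde u+w)^5-\tilde u^5\bigr\|_{L^1_xL^2_t},
\]
while expanding the binomial and applying Hölder's inequality in the form $\|g_1\cdots g_5\|_{L^1_xL^2_t}\le\prod_{i=1}^5\|g_i\|_{L^5_xL^{10}_t}$ yields
\[
\bigl\|(\tilde u+w)^5-\tilde u^5\bigr\|_{L^1_xL^2_t}\lesssim\bigl(\delta_0^4+\|w\|_{L^5_xL^{10}_t}^4\bigr)\|w\|_{L^5_xL^{10}_t}.
\]
Choosing $\delta_0$ and then $\eps_0$ small makes the relevant map a contraction on a suitable ball, producing $u$ with $\|w\|_{L^5_xL^{10}_t}\lesssim\eps$; feeding this back through Lemma~\ref{L:linear estimates}, applied first to $w$ and then to $u=\tilde u+w$, gives the remaining bounds, with the $L^\infty_tL^2_x$, $L^\infty_xL^2_t$ and $L^4_xL^\infty_t$ norms acquiring their dependence on $M,M'$ only through $\|w(0)\|_{L^2_x}\le M'$ and $\|u(0)\|_{L^2_x}\le M+M'$.

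To remove the smallness assumption on $\tilde u$ one partitions $I$ into $J=J(L)\lesssim(L/\delta_0)^5+1$ consecutive subintervals $I_j=[t_{j-1},t_j]$ with $\|\tilde u\|_{L^5_xL^{10}_t(I_j\times\R)}\le\delta_0$ and applies the short-time step on $I_1,I_2,\dots$ in turn. The one genuinely delicate point --- the step I expect to be the main obstacle --- is that running the short-time step on $I_j$ requires a bound on
\[
\eps_j:=\bigl\|e^{-(t-t_{j-1})\partial_x^3}\bigl(u(t_{j-1})-\tilde u(t_{j-1})\bigr)\bigr\|_{L^5_xL^{10}_t(I_j\times\R)},
\]
and this is \emph{not} automatically of size $\eps$: it also records the nonlinear discrepancy built up on the earlier intervals. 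Writing out the Duhamel formulas for $u$ and $\tilde u$ on $[0,t_{j-1}]$ gives
\[
e^{-(t-t_{j-1})\partial_x^3}\bigl(u(t_{j-1})-\tilde u(t_{j-1})\bigr)=e^{-t\partial_x^3}(u_0-\tilde u_0)+\int_0^{t_{j-1}}e^{-(t-s)\partial_x^3}\bigl[\partial_x(u^5-\tilde u^5)-e\bigr]\,ds,
\]
so that Lemma~\ref{L:linear estimates} together with the hypotheses yields
\[
\eps_j\lesssim\eps+\sum_{i<j}\bigl\|u^5-\tilde u^5\bigr\|_{L^1_xL^2_t(I_i\times\R)}\lesssim\eps+\sum_{i<j}\|u-\tilde u\|_{L^5_xL^{10}_t(I_i\times\R)}.
\]
One then shows by induction on $j$ that $\|u-\tilde u\|_{L^5_xL^{10}_t(I_j\times\R)}\le C_j\eps$ and $\|u(t_j)-\tilde u(t_j)\|_{L^2_x}\le M'+C_j\eps$, where the $C_j$ obey a recursion of the form $C_j\le C\bigl(1+\sum_{i<j}C_i\bigr)$; since $j$ runs over only $J=J(L)$ values, this forces $C_j\le C(M,M',L)$, provided $\eps<\eps_1(M,M',L)$ is small enough that $\eps_j<\eps_0$ at each stage.

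Finally, the stated conclusions follow by summing the per-interval estimates over the $J$ subintervals, using $\|f\|_{L^1_xL^2_t(I)}\le\sum_j\|f\|_{L^1_xL^2_t(I_j)}$, $\|f\|_{L^5_xL^{10}_t(I)}^5=\sum_j\|f\|_{L^5_xL^{10}_t(I_j)}^5$, $\|f\|_{L^6_{t,x}(I)}^6=\sum_j\|f\|_{L^6_{t,x}(I_j)}^6$, and the analogous subadditivity for the $L^\infty_tL^2_x$, $L^\infty_xL^2_t$ and $L^4_xL^\infty_t$ norms; this produces $\|u-\tilde u\|_{L^5_xL^{10}_t(I\times\R)}$ and $\|u^5-\tilde u^5\|_{L^1_xL^2_t(I\times\R)}$ of size $\lesssim_{M,M',L}\eps$ and all remaining norms of size $\lesssim_{M,M',L}1$. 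Running the identical argument on $I\cap(-\infty,0]$ and gluing the two halves by the uniqueness part of Theorem~\ref{T:local} produces a single solution $u$ on all of $I$ --- in particular $u$ cannot break down inside $I$, being obtained on each of finitely many subintervals.
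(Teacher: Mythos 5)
Your overall scheme---a short-time contraction step under the hypothesis $\|\tilde u\|_{L^5_xL^{10}_t}\le\delta_0$, followed by a partition of $I$ into subintervals on which that smallness holds, with the Duhamel formula tracking the accumulated discrepancy from interval to interval---is exactly the structure of the paper's proof (Lemma~\ref{L:stab 1} plus the iteration in the proof of Theorem~\ref{T:stab}). The short-time step, the recursion for $\eps_j$ via the Duhamel representation of $u(t_{j-1})-\tilde u(t_{j-1})$, and the final gluing are all fine.

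There is, however, a genuine gap in the interval count, and it is the one nontrivial point in the whole argument. You assert that $I$ can be split into $J\lesssim(L/\delta_0)^5+1$ subintervals with $\|\tilde u\|_{L^5_xL^{10}_t(I_j\times\R)}\le\delta_0$, and later invoke the identity $\|f\|_{L^5_xL^{10}_t(I)}^5=\sum_j\|f\|_{L^5_xL^{10}_t(I_j)}^5$. That identity is false for the mixed norm $L^5_xL^{10}_t$: writing $a_j(x):=\|f(x,\cdot)\|_{L^{10}_t(I_j)}$, one has
\[
\|f\|_{L^5_xL^{10}_t(I)}^5=\int_\R\Bigl(\sum_j a_j(x)^{10}\Bigr)^{1/2}dx
\quad\text{whereas}\quad
\sum_j\|f\|_{L^5_xL^{10}_t(I_j)}^5=\int_\R\sum_j a_j(x)^5\,dx,
\]
and since $\bigl(\sum_j a_j^{10}\bigr)^{1/2}\le\sum_j a_j^5$ the true relation is an inequality, in the direction that is useless for counting: a large collection of subintervals each carrying norm $\sim\delta_0$ can coexist with a small total norm $L$, because the exterior $L^5_x$ integral is taken before the time decomposition is recombined. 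Consequently the bound $J\lesssim(L/\delta_0)^5$ does not follow, and without a finite bound $J=J(L)$ the induction constants $C_j$ cannot be closed. The paper handles exactly this obstruction: it chooses the partition so that $\tfrac12\eps_0\le\|\tilde u\|_{L^5_xL^{10}_t(I_j)}<\eps_0$ and then runs a three-norm interpolation argument (comparing $\bigl\|\bigl(\sum|f_j|^5\bigr)^{1/5}\bigr\|_{L^5_x}$ against $\bigl\|\bigl(\sum|f_j|^{10}\bigr)^{1/10}\bigr\|_{L^5_x}$ and $\bigl\|\sum|f_j|\bigr\|_{L^5_x}$) to deduce $N\lesssim(1+L/\eps_0)^{10}$. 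Your draft needs an argument of this kind in place of the purported additivity; the exponent one obtains this way is $10$, not $5$.

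One smaller remark: the additivity you use for the other norms is fine ($L^6_{t,x}$ is genuinely additive in the sixth power over a time partition, and $\|f\|_{L^1_xL^2_t(I)}\le\sum_j\|f\|_{L^1_xL^2_t(I_j)}$ holds with $\le$), so only the $L^5_xL^{10}_t$ step needs to be repaired.
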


\begin{remark}
Theorem~\ref{T:stab} implies the existence and uniqueness of maximal-lifespan solutions to \eqref{eq:gKdV}.  It also proves that
the solutions depend uniformly continuously on the initial data (on bounded sets) in spacetime norms which are critical with respect to
scaling.
\end{remark}

The proof of a stability result is by now standard; we follow the exposition in \cite{tvz:cc}.  One first obtains a short-time
stability result which can be iterated to obtain a long-time stability result, as long as the number of iterations depends only
on the mass and the Strichartz norm.

\begin{lemma}[Short-time stability]\label{L:stab 1}
Let $I$ be a time interval containing zero and let $\tilde u$ be a solution to \eqref{kdv approx} on $I\times\R$ for some function $e$.  Assume that
$$
\|\tilde u\|_{L^\infty_tL^2_x(I\times\R)}\le M
$$
for some positive constant $M$.  Let  $u_0$ be such that
$$
\|u_0-\tilde u_0\|_{L^2_x}\le M'
$$
for some positive constant $M'$.  Assume also the smallness conditions
\begin{align}
\|\tilde u\|_{L^5_xL^{10}_t(I\times \R)}&\le \eps_0 \label{eq:small 1}  \\
\bigl\|e^{-t\partial_x^3}(u_0-\tilde u_0)\bigr\|_{L^5_xL^{10}_t(I\times \R)}&\le \eps \label{eq:small 2}  \\
\bigl\||\partial_x|^{-1}e\bigr\|_{L^1_xL^2_t(I\times \R)} &\le \eps \label{eq:small 3}
\end{align}
for some small $0<\eps<\eps_0=\eps_0(M,M')$.  Then there exists a solution $u$ to \eqref{eq:gKdV} on $I\times \R$ with initial
data $u_0$ at time $t=0$ satisfying
\begin{align}
\|u-\tilde u\|_{L^5_xL^{10}_t(I\times \R)} &\lesssim \eps \label{concl 4}\\
\bigl\|u^5-\tilde u^5\bigr\|_{L^1_xL^2_t(I\times \R)} &\lesssim \eps \label{concl 5}\\
\|u-\tilde u\|_{L_t^\infty L_x^2(I\times\R)} + \bigl\||\partial_x|^{1/6}(u-\tilde u)\bigr\|_{L_{t,x}^6(I\times\R)}&\lesssim M' \label{concl 6}\\
\bigl\|\partial_x u\bigr\|_{L^\infty_xL^2_t(I\times \R)} + \bigl\||\partial_x|^{-1/4}u\bigr\|_{L^4_xL^\infty_t(I\times \R)}
+ \|u\|_{L_t^\infty L_x^2(I\times\R)}\notag\\
+ \, \bigl\||\partial_x|^{1/6}u\bigr\|_{L_{t,x}^6(I\times\R)}+ \|u\|_{L^5_xL^{10}_t(I\times \R)}&\lesssim M+M'. \label{concl 7}
\end{align}
\end{lemma}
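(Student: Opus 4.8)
The plan is to recast \eqref{kdv approx} and \eqref{eq:gKdV} as fixed-point problems for the difference $w := u - \tilde u$ and close a contraction estimate on a suitable Strichartz ball, all on the (short) interval $I$ on which the smallness hypotheses \eqref{eq:small 1}--\eqref{eq:small 3} hold. Writing $u = \tilde u + w$, the function $w$ solves
\begin{equation*}
(\partial_t + \partial_{xxx})w = \partial_x\bigl((\tilde u + w)^5 - \tilde u^5\bigr) - e, \qquad w(0) = u_0 - \tilde u_0,
\end{equation*}
so that $w = e^{-t\partial_x^3}(u_0 - \tilde u_0) + \int_0^t e^{-(t-s)\partial_x^3}\partial_x\bigl[(\tilde u + w)^5 - \tilde u^5\bigr](s)\,ds - \int_0^t e^{-(t-s)\partial_x^3} e(s)\,ds$. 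I would run the contraction in the norm
\begin{equation*}
\|w\|_X := \|w\|_{L^5_xL^{10}_t(I\times\R)} + \|w\|_{L^\infty_tL^2_x(I\times\R)} + \bigl\||\partial_x|^{1/6}w\bigr\|_{L^6_{t,x}(I\times\R)},
\end{equation*}
controlling the inhomogeneous terms by Lemma~\ref{L:linear estimates}: the forcing $\partial_x[\cdots]$ lands naturally in $L^1_xL^2_t$ after pulling out a $\partial_x$, i.e. one estimates $\||\partial_x|^{-1}\partial_x[(\tilde u+w)^5 - \tilde u^5]\|_{L^1_xL^2_t} = \|(\tilde u+w)^5 - \tilde u^5\|_{L^1_xL^2_t}$, while the $e$-term is controlled by \eqref{eq:small 3} since $\||\partial_x|^{-1}e\|_{L^1_xL^2_t}\le\eps$.

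The key nonlinear estimate is a Hölder-in-$(x,t)$ bound of the schematic form
\begin{equation*}
\bigl\|(\tilde u + w)^5 - \tilde u^5\bigr\|_{L^1_xL^2_t(I\times\R)}
  \lesssim \bigl(\|\tilde u\|_{L^5_xL^{10}_t} + \|w\|_{L^5_xL^{10}_t}\bigr)^4 \|w\|_{L^5_xL^{10}_t},
\end{equation*}
which follows by expanding the difference into monomials each containing at least one factor of $w$ and applying Hölder with exponents $\tfrac15 = \tfrac15 + 4\cdot\tfrac{1}{\infty}$ in $x$ — wait, more carefully $\tfrac{1}{1} = 5\cdot\tfrac15$ in $x$ and $\tfrac12 = \tfrac{1}{10} + 4\cdot\tfrac{1}{10}\cdot\tfrac12$... the point is that $L^5_xL^{10}_t$ is scaling-critical and five such factors combine to give exactly $L^1_xL^2_t$, so the estimate closes with a constant independent of $I$. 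Feeding this into the Duhamel formula and using \eqref{eq:small 1} ($\|\tilde u\|_{L^5_xL^{10}_t}\le\eps_0$) and \eqref{eq:small 2} ($\|e^{-t\partial_x^3}(u_0-\tilde u_0)\|_{L^5_xL^{10}_t}\le\eps$), a standard bootstrap shows that the map $w \mapsto \Phi(w)$ is a contraction on the ball $\{\|w\|_X \le C\eps\}$ once $\eps_0 = \eps_0(M,M')$ is small enough; the fixed point is the desired genuine solution $u = \tilde u + w$. Conclusions \eqref{concl 4} and \eqref{concl 5} are then immediate from $\|w\|_X\lesssim\eps$ and the nonlinear estimate above.

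For the remaining conclusions \eqref{concl 6} and \eqref{concl 7} I would, having constructed $u$, simply re-run Lemma~\ref{L:linear estimates} once more — now for $w$ on the full set of Strichartz and smoothing norms appearing in that lemma (the Kato smoothing $\|\partial_x u\|_{L^\infty_xL^2_t}$, the maximal function $\||\partial_x|^{-1/4}u\|_{L^4_xL^\infty_t}$, etc.) — using the already-established bound on the nonlinear forcing term. The bound $\|w\|_{L^\infty_tL^2_x} + \||\partial_x|^{1/6}w\|_{L^6_{t,x}}\lesssim M'$ in \eqref{concl 6} comes from the fact that $\|e^{-t\partial_x^3}(u_0 - \tilde u_0)\|_{L^2_x} = \|u_0 - \tilde u_0\|_{L^2_x}\le M'$ together with the (now small) Duhamel contribution, while \eqref{concl 7} follows by adding the corresponding bounds for $\tilde u$ (which are $\lesssim M + L$, but on the short interval $L \lesssim \eps_0 \lesssim M$) to those for $w$. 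The main obstacle — though it is a familiar one — is bookkeeping the interplay of the \emph{anisotropic} norms $L^r_x L^q_t$ in the nonlinear Hölder estimate: unlike the NLS setting one cannot use a single $L^q_{t,x}$ Strichartz norm, so one must check that the exponents in $L^5_xL^{10}_t$ genuinely multiply up to $L^1_xL^2_t$ and that no derivative is lost when the $\partial_x$ is absorbed; this is exactly where the specific Kato-smoothing estimate of Lemma~\ref{L:linear estimates}, with its $L^1_xL^2_t$ input slot for $\partial_x F$, is indispensable.
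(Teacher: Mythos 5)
Your outline follows the paper quite closely in spirit (same Duhamel formula for $w=u-\tilde u$, same key H\"older estimate placing the nonlinearity in $L^1_xL^2_t$, same use of Lemma~\ref{L:linear estimates}), with the one structural difference that the paper derives \eqref{concl 4}--\eqref{concl 7} as \emph{a priori} estimates (granting existence from the local theory and running a continuity argument in $A(t):=\|w\|_{L^5_xL^{10}_t([0,t]\times\R)}$), while you re-prove existence via a contraction. That choice is legitimate, but as written the contraction has a gap.

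The issue is the choice of contraction norm. You define
$\|w\|_X := \|w\|_{L^5_xL^{10}_t} + \|w\|_{L^\infty_tL^2_x} + \||\partial_x|^{1/6}w\|_{L^6_{t,x}}$
and claim the map is a contraction on $\{\|w\|_X\le C\eps\}$. This cannot hold: the linear part $e^{-t\partial_x^3}(u_0-\tilde u_0)$ already contributes $\|u_0-\tilde u_0\|_{L^2_x}$ to $\|w\|_{L^\infty_tL^2_x}$, and this quantity is only assumed $\le M'$, \emph{not} small. So $\Phi$ does not map a ball of radius $\sim\eps$ in $X$ into itself unless $M'\lesssim\eps$, which is not part of the hypotheses. (You in fact acknowledge later that $\|w\|_{L^\infty_tL^2_x}\lesssim M'$, which contradicts $\|w\|_X\le C\eps$.) The fix is to run the contraction --- or, as in the paper, the continuity/bootstrap argument --- \emph{only} in the scattering norm $\|w\|_{L^5_xL^{10}_t}$, which is the one that is genuinely small thanks to \eqref{eq:small 1}--\eqref{eq:small 3} and the $L^1_xL^2_t$ input slot of Lemma~\ref{L:linear estimates}. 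Once $\|w\|_{L^5_xL^{10}_t}\lesssim\eps$ and hence \eqref{concl 5} are in hand, the remaining norms in \eqref{concl 6}--\eqref{concl 7} are recovered a posteriori by one more application of Lemma~\ref{L:linear estimates}, exactly as you propose in the last paragraph; there the $L^2_x$ contribution from the data correctly enters at size $M'$. With this separation of scales (small in $L^5_xL^{10}_t$, merely bounded in the remaining norms) the argument closes.
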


\begin{proof}
By the local well-posedness theory, it suffices to prove \eqref{concl 4} through \eqref{concl 7} as \emph{a priori} estimates,
that is, we may assume that the solution $u$ already exists.  Also, we may assume, without loss of generality, that $0=\inf I$.

Let $w:=u-\tilde u$. Then $w$ satisfies the following initial-value problem
\begin{align*}
\begin{cases}
(\partial_t+\partial_{xxx})w = \partial_x\bigl((\tilde u + w)^5 -\tilde u^5 \bigr) -e \\
w(0)=u_0-\tilde u_0.
\end{cases}
\end{align*}
For $t \in I$, we write
\begin{align*}
A(t) := \|w\|_{L^5_xL^{10}_t([0,t]\times \R)}\quad \text{and}\quad B(t) := \bigl\|(\tilde u+w)^5-\tilde u^5\bigr\|_{L^1_xL^2_t([0,t]\times \R)}.
\end{align*}
Then by Lemma~\ref{L:linear estimates}, \eqref{eq:small 2}, and \eqref{eq:small 3} we estimate
\begin{align*}
A(t)
&\lesssim \|e^{-t\partial_x^3}w(0)\|_{L^5_xL^{10}_t([0,t]\times \R)} + B(t) + \bigl\||\partial_x|^{-1}e\bigr\|_{L^1_xL^2_t([0,t]\times \R)} \\
&\lesssim \eps + B(t).
\end{align*}
On the other hand, H\"older's inequality yields
\begin{align}\label{B(t)}
B(t)\lesssim \|w\|_{L^5_xL^{10}_t}\bigl( \|w\|_{L^5_xL^{10}_t} + \|\tilde u\|_{L^5_xL^{10}_t}\bigr)^4 \lesssim A(t)^5 + \eps_0^4 A(t),
\end{align}
where all spacetime norms are on $[0,t]\times\R$.  Thus, we obtain
\begin{align*}
A(t)&\lesssim \eps + A(t)^5 + \eps_0^4 A(t),
\end{align*}
from which a continuity argument yields
$$
A(t)\lesssim \eps \quad \text{for all} \quad  t\in I,
$$
provided $\eps_0$ is chosen sufficiently small.  This proves \eqref{concl 4}.  Conclusion \eqref{concl 5} follows from
\eqref{concl 4} and \eqref{B(t)}.

It remains to establish \eqref{concl 6} and \eqref{concl 7}.  By Lemma~\ref{L:linear estimates}, \eqref{concl 5}, and the hypotheses,
\begin{align*}
\|u-\tilde u\|_{L_t^\infty L_x^2(I\times\R)} + &\bigl\||\partial_x|^{1/6}(u-\tilde u)\bigr\|_{L_{t,x}^6(I\times\R)}\\
&\lesssim \|w(0)\|_{L_x^2} + \bigl\|(\tilde u+w)^5-\tilde u^5\bigr \|_{L^1_xL^{2}_t(I\times\R)} +\bigl\||\partial_x|^{-1}e\bigr\|_{L^1_xL^2_t(I\times \R)}\\
&\lesssim M' +\eps
\end{align*}
and
\begin{align*}
\text{LHS\eqref{concl 7}}
&\lesssim \|u_0\|_{L^2_x} + \|u^5\|_{L^1_xL^2_t(I\times \R)} \\
&\lesssim \|\tilde u_0\|_{L^2_x} + \|w(0)\|_{L^2_x} + \|\tilde u^5\|_{L^1_xL^2_t(I\times \R)}+ \bigl\|u^5-\tilde u^5\bigr \|_{L^1_xL^{2}_t(I\times\R)}\\
&\lesssim M+M' +\eps_0^5+\eps.
\end{align*}
Taking $\eps_0=\eps_0(M,M')$ sufficiently small, we derive \eqref{concl 6} and \eqref{concl 7}.

This completes the proof of the lemma.
\end{proof}

We are now ready to complete the proof of Theorem~\ref{T:stab}.

\begin{proof}[Proof of Theorem~\ref{T:stab}]
We will derive Theorem~\ref{T:stab} from Lemma~\ref{L:stab 1} by an iterative procedure.  First, we assume, without loss of
generality, that $0=\inf I$.  Now let $\eps_0=\eps_0(M,2M')$ be as in Lemma~\ref{L:stab 1}.  Note that we have to replace $M'$
by the slightly larger $2M'$ as the difference $u(t)-\tilde u(t)$ in $L_x^2$ may possibly grow in time.

Divide $I$ into $N$ many intervals $I_j=[t_j, t_{j+1}]$ such that on each time interval $I_j$ we have
\begin{align}\label{small 5-10}
\tfrac12 \eps_0\leq \|\tilde u\|_{L_x^5L_t^{10}(I_j\times\R)}< \eps_0.
\end{align}
We will first show that $N$ depends only on $\eps_0$ and $L$, and hence only on $M,M',L$.  Indeed, for $0\leq j<N-1$, let
$$
f_j(x):= \|\tilde u(x)\|_{L_t^{10}(I_j)}.
$$
Then, by the hypothesis,
\begin{align*}
\Bigl\|\Bigl(\sum_{j=0}^{N-1} |f_j\,|^{10}\Bigr)^{1/10}\Bigr\|_{L_x^5}= \|\tilde u\|_{L_x^5L_t^{10}(I\times\R)}\leq L.
\end{align*}
Also, using \eqref{small 5-10} and the triangle inequality, we obtain
\begin{align*}
\Bigl\|\sum_{j=0}^{N-1} |f_j\,|\Bigr\|_{L_x^5}\leq \sum_{j=0}^{N-1}\|\tilde u\|_{L_x^5L_t^{10}(I_j\times\R)} \sim N\eps_0
\end{align*}
and
\begin{align*}
\Bigl\|\Bigl(\sum_{j=0}^{N-1} |f_j\,|^5\Bigr)^{1/5}\Bigr\|_{L_x^5} = \Bigl\|\sum_{j=0}^{N-1} |f_j\,|^5\Bigr\|_{L_x^1}^{1/5} =
\Bigl(\sum_{j=0}^{N-1} \|\tilde u\|_{L_x^5L_t^{10}(I_j\times\R)}^5 \Bigr)^{1/5} \sim N^{1/5}\eps_0.
\end{align*}
As, by interpolation,
\begin{align*}
\Bigl\|\Bigl(\sum_{j=0}^{N-1} |f_j\,|^5\Bigr)^{1/5}\Bigr\|_{L_x^5} \lesssim \Bigl\|\Bigl(\sum_{j=0}^{N-1}
|f_j\,|^{10}\Bigr)^{1/10}\Bigr\|_{L_x^5}^{8/9} \Bigl\|\sum_{j=0}^{N-1} |f_j\,|\Bigr\|_{L_x^5}^{1/9},
\end{align*}
we obtain
$$
N^{1/5}\eps_0 \lesssim L^{8/9} (N\eps_0)^{1/9},
$$
which implies $N\lesssim \bigl(1+\frac{L}{\eps_0}\bigr)^{10}$.

Choosing $\eps_1$ sufficiently small depending on $N,M,M'$, we can apply Lemma~\ref{L:stab 1} inductively to obtain that for each
$0\leq j< N$ and all $0<\eps<\eps_1$,
\begin{equation}\label{bounds on j}
\left.
\begin{aligned}
\|u-\tilde u\|_{L_x^5L_t^{10}(I_j\times \R)} &\leq C(j,M, M') \eps \\
\bigl\|u^5-\tilde u^5\bigr\|_{L^1_xL^2_t(I_j\times \R)} &\leq C(j,M, M') \eps \\
\|u-\tilde u\|_{L_t^\infty L_x^2(I_j\times\R)} + \bigl\||\partial_x|^{1/6}(u-\tilde u)\bigr\|_{L_{t,x}^6(I_j\times\R)}&\leq C(j,M, M') \\
\bigl\|\partial_x u\bigr\|_{L^\infty_xL^2_t(I_j\times \R)} + \bigl\||\partial_x|^{-1/4}u\bigr\|_{L^4_xL^\infty_t(I_j\times \R)}
+ \|u\|_{L_t^\infty L_x^2(I_j\times\R)}\\
+ \, \bigl\||\partial_x|^{1/6}u\bigr\|_{L_{t,x}^6(I_j\times\R)}+ \|u\|_{L^5_xL^{10}_t(I_j\times \R)}&\leq C(j,M, M'),
\end{aligned}\quad
\right\}
\end{equation}
provided we can show that
\begin{align}
\|u(t_j)-\tilde u(t_j)\|_{L^2_x}&\le 2M' \label{claim 1}\\
\bigl\|e^{-(t-t_j)\partial_x^3}\bigl(u(t_j)-\tilde u(t_j)\bigr)\bigr\|_{L_x^5L_t^{10}(I_j\times \R)}&\leq C(j,M,M')\eps\leq \eps_0. \label{claim 2}
\end{align}

To derive \eqref{claim 1} we apply Lemma~\ref{L:linear estimates} and use the inductive hypotheses, namely, that \eqref{bounds on j} holds
for all preceding values of $j$:
\begin{align*}
\|u(t_j)-\tilde u(t_j)\|_{L^2_x}
&\leq \|u_0-\tilde u_0\|_{L^2_x} + \bigl\|u^5-\tilde u^5\bigr\|_{L^1_xL^2_t([0,t_j]\times \R)} + \bigl\| |\partial_x|^{-1}e\|_{L_x^1L_t^2([0,t_j]\times\R)}\\
&\leq M' + \sum_{k=0}^{j-1} C(k,M,M')\eps + \eps.
\end{align*}
Taking $\eps_1=\eps_1(N,M,M')$ sufficiently small yields \eqref{claim 1}.

To obtain \eqref{claim 2} we apply again Lemma~\ref{L:linear estimates} and use \eqref{bounds on j} to estimate
\begin{align*}
\bigl\| & e^{-(t-t_j)\partial_x^3} \bigl(u(t_j)-\tilde u(t_j)\bigr)\bigr\|_{L_x^5L_t^{10}(I_j\times \R)}\\
&\lesssim \bigl\|e^{-t\partial_x^3}(u_0-\tilde u_0)\bigr\|_{L_x^5L_t^{10}([0, t_j]\times \R)}
    + \bigl\|u^5-\tilde u^5\bigr\|_{L^1_xL^2_t([0,t_j]\times \R)} +\bigl\| |\partial_x|^{-1}e\|_{L_x^1L_t^2([0,t_j]\times\R)}\\
&\lesssim \eps + \sum_{k=0}^{j-1} C(k,M,M')\eps +\eps.
\end{align*}
This proves \eqref{claim 2} for $\eps_1=\eps_1(N,M,M')$ sufficiently small.

Summing the bounds in \eqref{bounds on j} over all subintervals $I_j$ completes the proof of the theorem.
\end{proof}

%%%%%%%%%%%%%%%%%%%%%%%%%%%%%%%%%%%%%%%%%%%%%%%%%%%%%%%%%%%%%%%%%%%%%%%%%%%%%%%%%%%%%%%%%%%
%
%
%                                   Section
%
%
%%%%%%%%%%%%%%%%%%%%%%%%%%%%%%%%%%%%%%%%%%%%%%%%%%%%%%%%%%%%%%%%%%%%%%%%%%%%%%%%%%%%%%%%%%%

\section{Embedding NLS inside gKdV}\label{S:NLS emb}

The purpose of this section is to prove the following

\begin{theorem}[Oscillatory profiles]\label{T:NLS emb}
Assume that Conjecture~\ref{Conj:NLS} holds.  Let $\phi\in L_x^2$ be a complex-valued function; in the focusing case, assume also that
$M(\phi)<2\rootsixfifths M(Q)$.  Let $\{\xi_n\}_{n\geq 1}\subset (0,\infty)$ with $\xi_n \to \infty$ and let $\{t_n\}_{n\geq 1}\subset \R$ such that
$3\xi_nt_n$ converges to some $T_0\in [-\infty, \infty]$.  Then for $n$ sufficiently large there exists a global solution $u_n$ to \eqref{eq:gKdV}
with initial data at time $t=t_n$ given by
\begin{align}\label{E:initial data}
u_n(t_n,x)= e^{-t_n\partial_x^3} \Re (e^{ix\xi_n} \phi(x)).
\end{align}
Moreover, the solution obeys the global spacetime bounds
\begin{align}\label{STB}
\bigl\| |\partial_x|^{1/6} u_n\bigr\|_{L_{t,x}^6(\R\times\R)} + \|u_n\|_{L_x^5 L_t^{10}(\R\times\R)} \lesssim_\phi 1
\end{align}
and for every $\eps>0$ there exist $n_\eps\in \N$ and $\psi_\eps\in C_c^\infty(\R\times\R)$ so that
\begin{align}\label{smooth approx}
\bigl\| u_n(t,x) - \Re \bigl[e^{ix\xi_n+it\xi_n^3} \psi_\eps\bigl( 3\xi_nt , x+ 3\xi_n^2 t\bigr)\bigr]\bigr\|_{L_x^5L_t^{10}(\R\times\R)}\leq \eps,
\end{align}
for all $n\geq n_\eps$.
\end{theorem}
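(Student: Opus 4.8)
The strategy is to exploit the known connection between highly oscillatory gKdV data and NLS, turning Conjecture~\ref{Conj:NLS} into the spacetime bounds \eqref{STB}. Write $\xi_n=\nu_n$ with $\nu_n\to\infty$, and make the ansatz
$$
u_n(t,x) = \Re\bigl[e^{ix\xi_n+it\xi_n^3}\, v_n\bigl(3\xi_n t,\ x+3\xi_n^2 t\bigr)\bigr] + (\text{error}),
$$
where $v_n$ solves the mass-critical NLS \eqref{eq:NLS} (with the appropriate $\mu$) with data $v_n(3\xi_n t_n) = e^{i s_n\partial_{xx}}\phi$ for the suitable shift $s_n$ arising from conjugating $e^{-t_n\partial_x^3}$ through the oscillation (this is where the hypothesis $3\xi_n t_n\to T_0$ enters: it guarantees the NLS data converge to $e^{iT_0\partial_{xx}/\text{(const)}}\phi$ or to a free-evolution limit when $T_0=\pm\infty$). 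First I would record that plugging this ansatz into \eqref{eq:gKdV}, after changing to the moving frame $(s,y)=(3\xi_n t, x+3\xi_n^2 t)$ and extracting the phase $e^{ix\xi_n+it\xi_n^3}$, produces the NLS equation for $v_n$ exactly (the cubic term $\frac5{24}\mu|v|^4v$ comes from the resonant part of $\partial_x(u^5)$ — this is the calculation of \cite{CCT, Tao:2remarks}) plus a forcing term $e_n$ consisting of the non-resonant contributions; these carry extra powers of $\xi_n^{-1}$ (more precisely, negative powers of $\nu_n$ after accounting for the derivative loss $\partial_x$), hence $\||\partial_x|^{-1}e_n\|_{L^1_xL^2_t}\to 0$.

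**Key steps, in order.** (1) By density, first treat the case $\phi\in C_c^\infty(\R)$; then by the stability theory (Theorem~\ref{T:stab}) and a limiting argument, pass to general $\phi\in L^2_x$ (with the mass restriction in the focusing case ensuring the NLS data stay below $2\rootsixfifths M(Q)$, so Conjecture~\ref{Conj:NLS} applies). (2) For smooth compactly supported $\phi$, let $v$ be the global NLS solution with data $e^{iT_0'\partial_{xx}}\phi$ (interpreting $T_0=\pm\infty$ via the linear evolution / scattering states); Conjecture~\ref{Conj:NLS} gives $\|v\|_{L^6_{t,x}}\lesssim_\phi 1$, and the NLS local theory upgrades this to control of all relevant Strichartz and smoothing norms, plus persistence of regularity so that $v$ and finitely many of its derivatives lie in $L^\infty_{s}L^2_y\cap(\text{Strichartz})$. (3) Approximate $v$ in $L^6_{t,x}$ (and in the needed norms) by $\psi_\eps\in C_c^\infty(\R\times\R)$ — this is the NLS analogue of the approximation used in prior concentration-compactness arguments; it gives \eqref{smooth approx} once we transfer back through the change of variables, using the identity $\|\Re[e^{i(\cdots)}F(3\xi_n t,x+3\xi_n^2 t)]\|_{L^5_xL^{10}_t}\sim\|F\|_{L^5_yL^{10}_s}$ (valid because, as in \eqref{E:interval len}, the support in $x$ at fixed $s$-profile is an interval of length $O(1+\xi_n)$, and the Jacobian works out). (4) Define $\tilde u_n$ to be the ansatz built from this $v$ (or from $\psi_\eps$); check that it is an approximate solution to \eqref{eq:gKdV} in the sense of \eqref{kdv approx}: the initial data $\tilde u_n(t_n)$ matches \eqref{E:initial data} up to an $o(1)$ error in $L^2_x$ whose free evolution is small in $L^5_xL^{10}_t$ (here one uses that $e^{-t_n\partial_x^3}$ restricted to frequencies near $\xi_n$ behaves, after the phase extraction, like $e^{is_n\partial_{xx}}$ up to a remainder controlled by $\nu_n^{-1}$ times derivatives of $\phi$), and the error $e_n$ satisfies $\||\partial_x|^{-1}e_n\|_{L^1_xL^2_t}=o(1)$. (5) Invoke Theorem~\ref{T:stab} with $M,L$ coming from step (2)-(3) to conclude that for $n$ large the genuine solution $u_n$ exists globally, obeys \eqref{STB}, and lies within $\eps$ of $\tilde u_n$, hence of the stated smooth approximant.

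**The main obstacle.** The crux is the nonlinear frequency analysis in step (4): one must verify that $\partial_x(u_n^5)$, when $u_n$ is the oscillatory ansatz, decomposes into the resonant piece that reproduces the NLS nonlinearity $\frac5{24}\mu|v|^4v$ (the factor $\frac5{24}$ is exactly what makes this match) plus genuinely error terms gaining a power of $\nu_n$. Since $u_n$ is real-valued, $u_n^5$ expands via $\Re[e^{ix\xi_n}v]^5$ into modes $e^{ikx\xi_n}$ for $k\in\{-5,-3,-1,1,3,5\}$; the $k=\pm1$ modes contribute the resonant term, and the $k=\pm3,\pm5$ modes are non-resonant — they oscillate in $x$ at rates $3\xi_n,5\xi_n$ relative to the slowly-varying $v$, so integrating by parts (or simply using that $|\partial_x|^{-1}$ applied to $e^{ik\xi_n x}(\text{slow})$ gains $(\,k\xi_n)^{-1}$) gives the needed smallness of $\||\partial_x|^{-1}e_n\|_{L^1_xL^2_t}$. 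Getting this uniformly and cleanly — and likewise handling the mismatch between the Airy free evolution $e^{-t_n\partial_x^3}$ of the oscillatory data and the Schrödinger evolution $e^{is_n\partial_{xx}}$ (a stationary-phase / Taylor expansion of the symbol $\xi^3$ about $\xi_n$, with the quadratic term producing $\partial_{xx}$ and the cubic-and-higher terms being the error) — is the technical heart of the proof, and it is where the relationship with the proof of Theorem~\ref{T:Tao} of \cite{Tao:2remarks} is most visible, the present argument running in the reverse direction.
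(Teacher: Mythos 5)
There is a genuine gap in your step (4), and a subsidiary one in steps (3) and (4).

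The error estimate you claim is wrong as stated. When $u_n$ is the oscillatory ansatz, the nonresonant modes $k=\pm3,\pm5$ in $\partial_x(u_n^5)$ carry a factor of $\xi_n$ out front (the derivative falling on the phase $e^{ik\xi_n x}$), so after applying $|\partial_x|^{-1}$ you only recover $\xi_n\cdot(k\xi_n)^{-1}=O(1)$: the error term denoted $E_n^1$ in the paper is \emph{not} small in $\||\partial_x|^{-1}\cdot\|_{L^1_xL^2_t}$, it is merely bounded. The paper explicitly notes this and handles it by a completely different mechanism: one modifies the approximate solution to $\tilde u_n - e_n$, where $e_n$ solves the forced linear Airy equation with forcing $E_n$, and Lemma~\ref{L:error-control} (imported from Tao's paper) shows $e_n$ is small in the relevant norms by an $X^{s,b}$-type argument exploiting the fact that the spacetime frequencies $(3\xi_n^3,3\xi_n)$, $(5\xi_n^3,5\xi_n)$ are far from the cubic $\omega=\xi^3$. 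Without this ingredient your appeal to Theorem~\ref{T:stab} does not go through, because its hypothesis on the forcing error is not met.

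A second structural gap: you build the NLS ansatz globally in time, but the NLS approximation to gKdV is only valid on time scales $|t|\lesssim1/\xi_n$ (this is precisely where the cubic Airy symbol can be Taylor-approximated by a Schr\"odinger quadratic near $\xi_n$). The paper therefore truncates the NLS ansatz to $|t|\leq T/(3\xi_n)$ for a large but $n$-independent $T$, and glues on the free Airy evolution outside. Controlling the tails then becomes nontrivial and requires Lemma~\ref{L:dispersion helps}, a Van der Corput argument showing that a scattered (well-dispersed) positive-frequency wave is also small under the Airy flow. Also note that your claimed norm equivalence $\|\Re[e^{i(\cdots)}F(3\xi_n t,x+3\xi_n^2 t)]\|_{L^5_xL^{10}_t}\sim\|F\|_{L^5_yL^{10}_s}$ is false: the map $(t,x)\mapsto(3\xi_n t,x+3\xi_n^2 t)$ mixes the variables, and $L^5_xL^{10}_t$ is not invariant under such boosts. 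The paper gets around this precisely by interpolating against $L^6_{t,x}$ (which \emph{is} invariant up to a Jacobian power) and the Schr\"odinger maximal function $L^4_xL^\infty_t$ (Lemma~\ref{L:Gulkan}). Finally, your density reduction to smooth $\phi$ is replaced in the paper by an $n$-dependent frequency cutoff $P_{|\xi|\leq\xi_n^{1/4}}$, which provides quantitative regularity of the NLS solution degrading slowly in $n$; this is cleaner and avoids the issue that persistence-of-regularity constants would blow up along your approximating sequence.
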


As noted in the introduction, this is a form of converse to Theorem~\ref{T:Tao}.  Let us briefly sketch the argument behind Theorem~\ref{T:Tao} as given
in \cite{Tao:2remarks}:  To prove Conjecture~\ref{Conj:NLS}, one merely needs to prove a priori spacetime bounds for Schwartz solutions to NLS on
a compact time interval.  As in \cite{CCT}, Tao exploits the fact that such solutions can be used to build approximate solutions to gKdV
of comparable size.  Conjecture~\ref{Conj:gKdV} controls the size of all solutions to gKdV and so also of these particular solutions.
Thus Conjecture~\ref{Conj:NLS} follows.

We have glossed over two subtleties in the argument.  First, the difference in scaling between NLS and gKdV means that they share no common critical
spacetime norm.  For this reason, the Schwartz nature of the solution and the compactness of the time interval play essential roles in Tao's
argument.  To prove Theorems~\ref{T:reduct} and~\ref{T:enemies}, we must contend with non-Schwartz solutions and work globally in time --- as
extremal objects, minimal-mass blowup solutions are not susceptible to a priori analysis.  Overcoming these difficulties represents the principal
novelty of this section.

The second subtlety stems from the necessity to use $X^{s,b}$-type estimates to control the discrepancy between the NLS and gKdV evolutions.
In this aspect, we borrow directly from \cite{Tao:2remarks}; see Lemma~\ref{L:error-control} below.

The remainder of this section is devoted to the

\begin{proof}[Proof of Theorem~\ref{T:NLS emb}]
For $|T_0|<\infty$, we define $v_n$ to be the global solution to \eqref{eq:NLS} with initial data
$$
v_n(T_0)= P_{|\xi|\leq \xi_n^{1/4}} e^{-iT_0\partial_x^2}\phi.
$$
When $T_0=\pm \infty$, we define $v_n$ to be the global solution to \eqref{eq:NLS} that obeys
$$
\bigl\|v_n(t) - P_{|\xi|\leq \xi_n^{1/4}} e^{-it\partial_x^2}\phi \bigr\|_{L_x^2}\to 0 \quad \text{as} \quad t\to T_0.
$$
(The fact that $\phi$ need not be Schwartz necessitates the introduction of a frequency cutoff.)
Such solutions exist since we assume Conjecture~\ref{Conj:NLS}.  Moreover, these solutions obey
\begin{align}\label{vn 66}
\|v_n\|_{L_{t,x}^6(\R\times\R)}\lesssim_\phi 1.
\end{align}
Combining this with standard persistence of regularity arguments (cf. Lemma~3.10 in \cite{Matador}) and the frequency localization
of the initial data, we deduce that
\begin{align}\label{reg for vn}
\bigl\| |\partial_x|^s v_n\bigr\|_{L_t^5L_x^{10}(\R\times\R)} + \bigl\| |\partial_x|^s v_n\bigr\|_{L_{t,x}^6(\R\times\R)}
+ \bigl\| |\partial_x|^s v_n\bigr\|_{L_t^\infty L_x^2(\R\times\R)} \lesssim_\phi \xi_n^{s/4},
\end{align}
for any $s\geq 0$.

By the perturbation theory for the mass-critical NLS, as worked out in \cite{Matador}, we also have
\begin{equation}\label{v_n to v}
v_n\to v \quad \text{in}\quad L_{t,x}^6(\R\times\R) \cap C_t^0 L_x^2(\R\times\R),
\end{equation}
where $v$ is the solution to \eqref{eq:NLS} with
\begin{equation*}
\begin{cases}
v(T_0) = e^{-iT_0\partial_x^2}\phi, \quad & \text{if } |T_0|<\infty\\
\lim_{t\to T_0}\bigl\|v(t) - e^{-it\partial_x^2}\phi \bigr\|_{L_x^2}= 0, \quad &\text{if } T_0=\pm\infty.
\end{cases}
\end{equation*}
This solution also exists, is global, and scatters by Conjecture~\ref{Conj:NLS}.  In particular, there exist $v_\pm \in L_x^2$ so that
$$
\bigl\|v(T) - e^{-iT\partial_x^2}v_+ \bigr\|_{L_x^2} + \bigl\|v(-T) - e^{iT\partial_x^2}v_- \bigr\|_{L_x^2} \to 0 \quad \text{as}\quad T\to \infty.
$$
(Note that if $T_0=\pm \infty$, then we can identify one scattering state, namely, $v_\pm =\phi$.)  Using this and \eqref{v_n to v},
we deduce that
\begin{equation}\label{vn small tail}
\lim_{T\to \infty} \lim_{n\to \infty}\bigl(\| v_n \|_{L_{t,x}^6(|t|>T)} + \bigl\|v_n(\pm T) - e^{\mp iT\partial_x^2}v_\pm \bigr\|_{L_x^2} \bigr)=0.
\end{equation}

Next we use $v_n$ to build an approximate solution to gKdV, namely,
\begin{equation}\label{E:tilde u defn}
\tilde u_n(t,x) = \begin{cases} \Re\bigl[e^{ix\xi_n+it\xi_n^3} v_n( 3\xi_nt , x+ 3\xi_n^2 t)\bigr], & \text{ when } |t| \leq \tfrac{T}{3\xi_n} \\[0.5ex]
\exp\bigl\{-\bigl(t-\tfrac{T}{3\xi_n}\bigr) \partial_x^3\bigr\} \tilde u_n(\tfrac{T}{3\xi_n}),      & \text{ when } \,t >  \tfrac{T}{3\xi_n} \\[0.5ex]
\exp\bigl\{-\bigl(t+\tfrac{T}{3\xi_n}\bigr) \partial_x^3\bigr\} \tilde u_n(-\tfrac{T}{3\xi_n}),      & \text{ when } \,t <  - \tfrac{T}{3\xi_n}.
\end{cases}
\end{equation}
Here $T$ is large an $n$-independent parameter that will be chosen in due course.

Our first task is to show that this is indeed almost a solution to gKdV.  We begin with the simpler large-time regime.  While the cubic
dispersion relation of Airy can be well approximated by a suitable quadratic polynomial (and hence Schr\"odinger) in a bounded frequency regime
(note the frequency localization and shift in \eqref{E:tilde u defn}), the minute differences are magnified over long time scales.
Thus, one cannot maintain the approximation by NLS over large time intervals.  The key observation to deal with this is that a positive-frequency solution
which is well-dispersed (i.e., resembles a scattered wave) for NLS is also well-dispersed for gKdV.  This is captured by the following lemma.

\begin{lemma}\label{L:dispersion helps}
Let $\phi\in L_x^2$ and let $\{\xi_n\}_{n\geq 1} \subset (0, \infty)$ such that $\lim_{n\to \infty} \xi_n = \infty$.  Then
$$
\lim_{T\to \infty} \lim_{n\to \infty}\bigl\| |\partial_x|^{1/6} e^{-t\partial_x^3}[ e^{ix\xi_n} e^{-iT\partial_x^2} \phi]\bigr\|_{L_{t,x}^6([0,\infty)\times\R)}
=0.
$$
\end{lemma}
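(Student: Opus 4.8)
The plan is to recognize the left-hand side as, up to errors that vanish as $n\to\infty$, a \emph{linear} Schr\"odinger Strichartz norm of $e^{-iT\partial_x^2}\phi$ restricted to the time half-line $[T,\infty)$; letting $T\to\infty$ then finishes the proof, since linear Schr\"odinger Strichartz norms are finite and hence their tails vanish. First I would reduce to the case $\hat\phi\in C^\infty_c(\R)$: given $\eps>0$, choose such a $\tilde\phi$ with $\|\phi-\tilde\phi\|_{L^2_x}<\eps$; applying Lemma~\ref{L:linear estimates} (with $F=G=0$) to the Airy evolution of $e^{ix\xi_n}e^{-iT\partial_x^2}(\phi-\tilde\phi)$ shows that the contribution of $\phi-\tilde\phi$ to the norm is $\lesssim\|\phi-\tilde\phi\|_{L^2_x}<\eps$, uniformly in $n$ and $T$. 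Thus it suffices to treat $\phi$ with $\hat\phi$ supported in some interval $[-R,R]$.

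The key step is an explicit change of variables. Writing out the oscillatory integral for $e^{-t\partial_x^3}[e^{ix\xi_n}e^{-iT\partial_x^2}\phi]$ and completing the cube, $t(\xi_n+\eta)^3=t\xi_n^3+3t\xi_n^2\eta+3t\xi_n\eta^2+t\eta^3$, one sees that the substitution $s=3\xi_n t$, $y=x+3\xi_n^2 t$ turns this function into $e^{ix\xi_n+it\xi_n^3}$ times
\[
G_n(s,y):=\frac1{\sqrt{2\pi}}\int e^{iy\eta}\,e^{i(s+T)\eta^2}\,e^{i\frac{s}{3\xi_n}\eta^3}\,\hat\phi(\eta)\,d\eta
\]
(up to the harmless phase coming from the sign convention for $e^{-iT\partial_x^2}$). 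Since the Airy evolution has Fourier support in $[\xi_n-R,\xi_n+R]$, the operator $|\partial_x|^{1/6}$ acts there as multiplication by $\xi_n^{1/6}$ up to a relative error $O(R/\xi_n)$ in every $L^p_x$ (Bernstein), while the Jacobian of the substitution contributes a factor $(3\xi_n)^{-1/6}$; combining these gives
\[
\bigl\||\partial_x|^{1/6}e^{-t\partial_x^3}[e^{ix\xi_n}e^{-iT\partial_x^2}\phi]\bigr\|_{L^6_{t,x}([0,\infty)\times\R)}=3^{-1/6}(1+o(1))\,\|G_n\|_{L^6_{s,y}([0,\infty)\times\R)}.
\]
Note that $G_n$ solves a one-dimensional Schr\"odinger equation perturbed by the lower-order symbol $e^{i\frac{s}{3\xi_n}\eta^3}$, which converges to $1$ on the frequency-localized data.

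It remains to show that $\limsup_{n\to\infty}\|G_n\|_{L^6_{s,y}([0,\infty)\times\R)}\to0$ as $T\to\infty$, which I would do by splitting the $s$-integral at a large parameter $S$. On $(S,\infty)$, the phase $\eta\mapsto y\eta+(s+T)\eta^2+\frac{s}{3\xi_n}\eta^3$ has second derivative $\ge s$ throughout $[-R,R]$ once $\xi_n\ge2R$, so van der Corput's lemma yields $\|G_n(s,\cdot)\|_{L^\infty_y}\lesssim_\phi s^{-1/2}$; interpolating with the conserved identity $\|G_n(s,\cdot)\|_{L^2_y}=\|\phi\|_{L^2_x}$ gives $\|G_n(s,\cdot)\|_{L^6_y}\lesssim_\phi s^{-1/3}$, whose sixth power is integrable near infinity, so $\|G_n\|_{L^6_{s,y}((S,\infty)\times\R)}\lesssim_\phi S^{-1/6}$ uniformly in (large) $n$ and in $T\ge0$. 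On $[0,S]$, since $|e^{i\frac{s}{3\xi_n}\eta^3}-1|\le sR^3/(3\xi_n)$ and all functions involved are band-limited to $[-R,R]$ (so Bernstein upgrades $L^2_y$ bounds to $L^6_y$ bounds), an elementary estimate gives $\|G_n-G_\infty\|_{L^6_{s,y}([0,S]\times\R)}\lesssim_{R,\phi}S^{7/6}\xi_n^{-1}\to0$, where $G_\infty(s,y)=[e^{-i(s+T)\partial_x^2}\phi](y)$. Combining these, $\limsup_{n\to\infty}\|G_n\|_{L^6_{s,y}([0,\infty)\times\R)}\le\bigl\|e^{-i\sigma\partial_x^2}\phi\bigr\|_{L^6_{\sigma,x}([T,\infty)\times\R)}$, and the right-hand side tends to $0$ as $T\to\infty$ by the (linear) Strichartz inequality and dominated convergence. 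Undoing the reduction to $\hat\phi\in C^\infty_c$ then completes the proof.

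The step I expect to be the main obstacle is controlling the discrepancy between the Airy and Schr\"odinger evolutions over the \emph{unbounded} time interval: on bounded $s$ this is a soft band-limited approximation, but on the tail one genuinely needs the self-dispersion of $e^{-iT\partial_x^2}\phi$ (equivalently, the scattering of the free NLS evolution). The saving grace, visible in the curvature computation above, is that the extra cubic term $\frac{s}{3\xi_n}\eta^3$ is lower order on the frequency-localized datum and only \emph{reinforces} the quadratic Schr\"odinger curvature rather than destroying it, so stationary phase retains the full $s^{-1/2}$ decay uniformly in $n$.
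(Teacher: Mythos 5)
Your argument is correct, and it arrives at the same quantitative conclusion by a route that partly overlaps with, but is organised differently from, the paper's.

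The paper does not change variables at all. It writes $|\partial_x|^{1/6}e^{-t\partial_x^3}[e^{ix\xi_n}e^{-iT\partial_x^2}\phi]$ as a convolution with a kernel whose phase is $x\xi + t\xi^3 + T(\xi-\xi_n)^2$, and applies van der Corput \emph{directly to this kernel}, keeping both the cubic and quadratic terms. Since $\partial_\xi^2 = 6t\xi + 2T \gtrsim T+\xi_n t$ on the (narrow, high-frequency) support, this gives the sharp $L^\infty_x$ decay $\xi_n^{1/6}(T+\xi_n t)^{-1/2}$ \emph{with the $T$-dependence already built in}; interpolating against $L^2_x$ and integrating in $t$ yields $\lesssim_\phi T^{-1}$ for the sixth power, and the result follows in one stroke without any splitting of the time interval.

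You, by contrast, first change variables $(s,y)=(3\xi_n t,\,x+3\xi_n^2 t)$ to strip off the Airy scaling and expose the Schr\"odinger evolution explicitly; then you split the $s$-integral at a large $S$, use van der Corput on the tail $s>S$ (keeping only the weaker bound $\Phi''\ge s$, uniform in $T$), and on $[0,S]$ compare $G_n$ with the genuine Schr\"odinger propagator $e^{-i(s+T)\partial_x^2}\phi$, whose $L^6_{t,x}$ Strichartz norm over $[T,\infty)$ supplies the decay in $T$. Both proofs hinge on the same curvature observation --- that the residual cubic phase reinforces rather than destroys the quadratic one --- but the paper retains the $T$-decay inside the stationary-phase estimate, whereas you discard it there and recover it from the NLS Strichartz tail. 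Your approach is slightly longer and requires the extra ingredients of the band-limited approximation $|\partial_x|^{1/6}\approx\xi_n^{1/6}$, the change-of-variables bookkeeping, and the $S$-splitting; on the other hand it makes the heuristic ``this is a boosted NLS evolution'' completely explicit, which is arguably better aligned with the spirit of Theorem~\ref{T:NLS emb}. Both give the same $T^{-1/6}$-rate in the end.

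Two small technical points worth noting explicitly if you keep your version. First, in the $L^\infty_x$ van der Corput estimate on the tail you actually have $\Phi''\ge s+2T\ge s$, so you are discarding usable decay in $T$; this is fine but worth saying. Second, the step ``$|\partial_x|^{1/6}$ acts as $\xi_n^{1/6}(1+O(R/\xi_n))$'' should be justified by noting that the resulting multiplier error is controlled by the Airy Strichartz estimate, so the $o(1)$ really is uniform and harmless; you gestured at this via Bernstein, which also works.
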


\begin{proof}
By the Strichartz inequality, it suffices to prove the claim when $\phi$ is a Schwartz function with compact Fourier support.

A computation reveals that
\begin{align*}
|\partial_x|^{1/6} e^{-t\partial_x^3}[ e^{ix\xi_n} e^{-iT\partial_x^2} \phi] (x)
%% &= \int_{\R} e^{ix\xi}e^{it\xi^3 + iT(\xi-\xi_n)^2} |\xi|^{1/6} \chi_{\supp \hat\phi}(\xi-\xi_n) (e^{ix\xi_n}\phi)^\wedge(\xi)\, d\xi\\
&= \bigl[ g_t \ast (e^{ix\xi_n}\phi) \bigr](x),
\end{align*}
where
$$
g_t (x) := (2\pi)^{-1/2} \int_{\R} e^{ix\xi}e^{it\xi^3 + iT(\xi-\xi_n)^2} |\xi|^{1/6} \chi_{\supp \hat\phi}(\xi-\xi_n)
$$
and $\chi_{\supp \hat\phi}$ denotes the characteristic function of the Fourier support of $\phi$.

Invoking the Van der Corput estimate \cite[Corollary, p.334]{Stein:large} and taking $n$ sufficiently large, we obtain
$$
\|g_t\|_{L_x^\infty}\lesssim_\phi \frac{\xi_n^{1/6}}{(T+ \xi_n t)^{1/2}}.
$$
Thus, for $n$ large,
$$
\bigl\| |\partial_x|^{1/6} e^{-t\partial_x^3}[ e^{ix\xi_n} e^{-iT\partial_x^2} \phi]\bigr\|_{L_x^\infty}
\lesssim_\phi \frac{\xi_n^{1/6}}{(T+ \xi_n t)^{1/2}} \|\phi\|_{L_x^1}.
$$

On the other hand, a direct computation shows
$$
\bigl\| |\partial_x|^{1/6} e^{-t\partial_x^3}[ e^{ix\xi_n} e^{-iT\partial_x^2} \phi]\bigr\|_{L_x^2}
\lesssim \xi_n^{1/6} \|\phi\|_{H_x^{1/6}}
$$
for $n$ sufficiently large.

Interpolating between the two bounds, we get
$$
\bigl\| |\partial_x|^{1/6} e^{-t\partial_x^3}[ e^{ix\xi_n} e^{-iT\partial_x^2} \phi]\bigr\|_{L_x^6}^6
\lesssim_{\phi} \frac{\xi_n}{(T+ \xi_n t)^2}.
$$
Finally, integrating with respect to time and letting $T\to \infty$ we derive the claim.
\end{proof}

The smallness of the linear evolution provided by Lemma~\ref{L:dispersion helps} carries over easily to the nonlinear evolution:

\begin{lemma}[Good approximation to gKdV -- large times]\label{L:good approx tail}
For $\tilde u_n$ as defined above, we have
\begin{align*}
\lim_{T\to \infty}\lim_{n\to \infty}
\bigl\| |\partial_x|^{-1}\bigl[ (\partial_t + \partial_{xxx})\tilde u_n - \mu \partial_x(\tilde u_n^5) \bigr]\bigr\|_{L^1_x L^2_t(|t| > \frac{T}{3\xi_n})}=0.
\end{align*}
\end{lemma}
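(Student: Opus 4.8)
The plan is to use that, on the region in question, $\tilde u_n$ solves the free Airy equation --- which reduces the error to a purely nonlinear term --- and then to dispose of the operator $|\partial_x|^{-1}\partial_x$ by frequency localization, ultimately reducing everything to Lemma~\ref{L:dispersion helps}. By the definition \eqref{E:tilde u defn}, on $\{|t|>T/(3\xi_n)\}$ one has $(\partial_t+\partial_{xxx})\tilde u_n=0$, so the quantity to be bounded equals $\bigl\||\partial_x|^{-1}\partial_x(\tilde u_n^5)\bigr\|_{L^1_xL^2_t(|t|>T/(3\xi_n))}$ (recall $|\mu|=1$); by the symmetry of the construction it suffices to treat the forward tail, where $\tilde u_n(t)=e^{-(t-T/(3\xi_n))\partial_x^3}\tilde u_n(T/(3\xi_n))$ with $\tilde u_n(T/(3\xi_n))=\Re[e^{ix\xi_n}h_n]$, $h_n$ being $v_n(T)$ translated in space by $\xi_nT$ and multiplied by a modulus-one constant. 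By \eqref{reg for vn}, $v_n(T)$ --- and hence $\tilde u_n$ on the tail --- is spectrally localized to $|\xi|\sim\xi_n$ up to an error that is rapidly small, as $\xi_n\to\infty$, in every Strichartz norm and with any number of derivatives; thus $\tilde u_n^5$ is, up to such errors, supported in finitely many frequency annuli $\{|\xi-c\xi_n|\ll\xi_n\}$, $c\in\{\pm1,\pm3,\pm5\}$. On each such annulus $|\partial_x|^{-1}\partial_x$ is multiplication by the constant $i\sign(c)$, and cutting $\tilde u_n^5$ off smoothly to slight enlargements of these annuli costs only a universal constant in $L^1_xL^2_t$; the rapidly small remainder --- in particular the genuinely low-frequency part of $\tilde u_n^5$, which is rapidly small because any low-frequency contribution to $\tilde u_n^5$ must involve the rapidly small high-frequency part of $v_n(T)$, a fifth power of a function with spectrum near $\pm\xi_n$ having empty spectrum near the origin --- survives the (at worst logarithmic) loss that $|\partial_x|^{-1}\partial_x$ can incur. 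This yields $\bigl\||\partial_x|^{-1}\partial_x(\tilde u_n^5)\bigr\|_{L^1_xL^2_t(t>T/(3\xi_n))}\lesssim\|\tilde u_n^5\|_{L^1_xL^2_t(t>T/(3\xi_n))}+o_n(1)$.

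It then remains to show $\|\tilde u_n^5\|_{L^1_xL^2_t(t>T/(3\xi_n))}\to 0$ in the iterated limit $\lim_{T\to\infty}\lim_{n\to\infty}$. By H\"older, $\|\tilde u_n^5\|_{L^1_xL^2_t}\le\|\tilde u_n\|_{L^5_xL^{10}_t}^5$; by interpolation (cf.\ Remark~\ref{R:wnj small}) together with the maximal-function bound $\bigl\||\partial_x|^{-1/4}\tilde u_n\bigr\|_{L^4_xL^\infty_t}\lesssim\|\tilde u_n(T/(3\xi_n))\|_{L^2_x}\le\|v_n(T)\|_{L^2_x}\lesssim_\phi 1$ from Lemma~\ref{L:linear estimates}, this is $\lesssim_\phi\bigl\||\partial_x|^{1/6}\tilde u_n\bigr\|_{L^6_{t,x}(t>T/(3\xi_n))}^3$. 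Since $|\partial_x|^{1/6}$ and the Airy semigroup commute with $\Re$, and $|\Re z|\le|z|$, after the substitution $s=t-T/(3\xi_n)$ and absorbing the spatial translation by $\xi_nT$ and the phase into $h_n$, this norm is at most $\bigl\||\partial_x|^{1/6}e^{-s\partial_x^3}[e^{ix\xi_n}v_n(T)]\bigr\|_{L^6_{s,x}([0,\infty)\times\R)}$; finally, \eqref{v_n to v} and \eqref{vn small tail} allow one to replace $v_n(T)$ by the fixed scattering state $e^{-iT\partial_x^2}v_+$ at the cost of an $L^2_x$ error that vanishes in the iterated limit (and whose Airy evolution is then controlled by Lemma~\ref{L:linear estimates}), turning this into exactly the quantity appearing in Lemma~\ref{L:dispersion helps} with $\phi$ replaced by $v_+$. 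Hence its iterated limit vanishes, and combining with the first paragraph completes the proof.

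The main obstacle is the frequency-localization step: $|\partial_x|^{-1}\partial_x$ is morally the Hilbert transform in $x$, which is unbounded on $L^1_x$, so it cannot be applied to an arbitrary forcing term; one really does need $\tilde u_n^5$ to be spectrally concentrated away from the origin and spread over only boundedly many dyadic scales, and the leftover low-frequency debris to be quantitatively tiny rather than merely $o_n(1)$. Tracking the rapidly small high-frequency tail of $v_n(T)$ through the fifth power and the frequency cutoffs is the only genuine bookkeeping; the remaining reductions are a routine combination of H\"older's inequality, the Strichartz and maximal-function estimates of Lemma~\ref{L:linear estimates}, and the dispersive decay already established in Lemma~\ref{L:dispersion helps}.
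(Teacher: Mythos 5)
Your proof reaches the right estimates and, from the second paragraph onward, follows exactly the chain used in the paper: reduce to $\bigl\|\tilde u_n\bigr\|_{L^5_xL^{10}_t}^5$ on the tail; interpolate against the $L^4_xL^\infty_t$ maximal-function bound to leave only $\bigl\||\partial_x|^{1/6}\tilde u_n\bigr\|_{L^6_{t,x}}^3$; use that $\tilde u_n$ is a free Airy wave on the tail with initial state $\Re[e^{ix\xi_n}v_n(T)]$ (modulo a spatial shift and unimodular constant); trade $v_n(T)$ for the scattering state $e^{-iT\partial_x^2}v_+$ at the cost of an $L^2$ error that vanishes in the iterated limit by \eqref{vn small tail}; and finish with Lemma~\ref{L:dispersion helps}.

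The place where you diverge from the paper is your opening paragraph, where you worry that $|\partial_x|^{-1}\partial_x=i\sign(\partial_x)$ is a Hilbert transform and so not bounded on $L^1_xL^2_t$, and go to considerable length localizing $\tilde u_n^5$ in frequency to the annuli $\{|\xi-c\xi_n|\ll\xi_n\}$, $c\in\{\pm1,\pm3,\pm5\}$, to replace $\sign(\partial_x)$ by a benign smooth multiplier. The paper simply writes the equality
$\bigl\||\partial_x|^{-1}[\mu\partial_x(\tilde u_n^5)]\bigr\|_{L^1_xL^2_t}=\bigl\|\tilde u_n^5\bigr\|_{L^1_xL^2_t}$
and moves on. This is not an oversight but a notational convention used consistently throughout: whenever the error $e$ has the explicit form $\partial_x F$, the paper's $|\partial_x|^{-1}e$ is used as shorthand for $F$ (compare the proofs of Lemma~\ref{L:good approx middle} and Lemma~\ref{L:Eq approx}, where $|\partial_x|^{-1}\partial_x$ is likewise dropped as the identity). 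Read that way, the relevant stability hypothesis is really \emph{there exists $F$ with $e=\partial_x F$ and $\|F\|_{L^1_xL^2_t}\le\eps$}, which is exactly what the Strichartz estimate of Lemma~\ref{L:linear estimates} consumes, and the lemma becomes the one-line identity.

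If one insists on reading $|\partial_x|^{-1}$ as the Fourier multiplier, your concern is genuine, and your frequency localization is the right idea; but as written it is not quite airtight. After projecting $\tilde u_n^5$ onto the six annuli, you are left with a remainder that is merely small in $L^1_xL^2_t$ and spread over all frequencies outside those annuli, and then you still have to apply $\sign(\partial_x)$ to it --- precisely the operation you cannot do on $L^1_x$. Saying it ``survives an at-worst logarithmic loss'' requires a frequency truncation on the remainder that you have not set up (the high-frequency tail of $v_n$ is not compactly supported, only rapidly decaying). The cleanest repair is to adopt the paper's antiderivative reading of $|\partial_x|^{-1}$, at which point the entire first paragraph of your argument is unnecessary.
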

\begin{proof}
By the definition of $\tilde u_n$,
\begin{align*}
\bigl\| |\partial_x|^{-1}\bigl[ (\partial_t + \partial_{xxx})\tilde u_n - \mu \partial_x(\tilde u_n^5) \bigr]\bigr\|_{L^1_x L^2_t(|t| > \frac{T}{3\xi_n})}
=  \bigl\| \tilde u_n \bigr\|_{L^5_x L^{10}_t(|t| > \frac{T}{3\xi_n})}^5.
\end{align*}
We will only consider the contribution from $t>\frac{T}{3\xi_n}$ to the right-hand side; negative values of~$t$ can be handled identically.
By analytic interpolation together with Lemma~\ref{L:linear estimates},
\begin{align*}
\bigl\| \tilde u_n \bigr\|_{L^5_x L^{10}_t( t > \frac{T}{3\xi_n})}^5
&\lesssim \bigl\| |\partial_x|^{1/6} \tilde u_n \bigr\|_{L^6_{t,x}(t > \frac{T}{3\xi_n})}^3
    \bigl\| |\partial_x|^{-1/4}\tilde u_n \bigr\|_{L^4_x L^{\infty}_t(t > \frac{T}{3\xi_n})}^2 \\
&\lesssim \bigl\| |\partial_x|^{1/6} e^{-t\partial_x^3} \tilde u_n(\tfrac{T}{3\xi_n}) \bigr\|_{L^6_{t,x}(t>0)}^3 \|\phi\|_{L^2_x}^2 \\
&\lesssim_\phi \bigl\| |\partial_x|^{1/6} e^{-t\partial_x^3} [e^{ix\xi_n} v_n( T )] \bigr\|_{L^6_{t,x}(t>0)}^3 \\
&\lesssim_\phi \bigl\|v_n( T) - e^{-iT\partial_x^2}v_+ \bigr\|_{L_x^2}^3 + \bigl\| |\partial_x|^{1/6} e^{-t\partial_x^3} [e^{ix\xi_n} e^{-iT\partial_x^2}v_+] \bigr\|_{L^6_{t,x}(t>0)}^3.
\end{align*}
Invoking \eqref{vn small tail} and Lemma~\ref{L:dispersion helps}, we derive the claim.
\end{proof}

We now turn to showing that $\tilde u_n$ is a good approximate solution in the middle interval $|t| \leq \frac{T}{3\xi_n}$.  Here we have
\begin{equation}\label{E:defn of E_n}
(\partial_t+\partial_{xxx})\tilde u_n = \mu\partial_x(\tilde u_n^5) + E_n,
\end{equation}
where $E_n:=E_n^1+E_n^2+E_n^3$ and the errors $E_n^j$ for $1\leq j\leq 3$ are given by
\begin{align*}
E_n^1 &:= \xi_n \sum_{k=3,5} C_{1,k} \Re\Bigl[ e^{ik\xi_n x+ik\xi_n^3t} \bigl(|v_n|^4v_n\bigr)\bigl(3\xi_n t, x+ 3\xi_n^2t\bigr)\Bigr]\\
E_n^2 &:= \sum_{k=1,3,5} C_{2,k} \Re\Bigl[ e^{ik\xi_nx+ik\xi_n^3t} \bigl(|v_n|^4v_n\bigr)_x\bigl(3\xi_nt, x+ 3\xi_n^2t\bigr)\Bigr]\\
E_n^3 &:= C_3 \Re\Bigl[ e^{i\xi_nx+i\xi_n^3t} (v_n)_{xxx}\bigl(3\xi_nt, x+ 3\xi_n^2t\bigr)\Bigr],
\end{align*}
with absolute constants $C_{1,3}, C_{1,5}, C_{2,1}, C_{2,3}, C_{2,5}, C_3$ of inconsequential value.  Note that
the constant $5/24$ in front of the nonlinearity in equation \eqref{eq:NLS} was chosen so as to cancel the `resonant' term $k=1$
in $E_n^1$.

Using \eqref{reg for vn} and making the necessary change of variables shows
\begin{equation}\label{two small errors}\begin{aligned}
\| E_n^2 \|_{L^1_t L^2_x (|t| \leq \frac{T}{3\xi_n})} &\lesssim \tfrac{\xi_n^{1/4}}{\xi_n} \| v_n \|_{L^5_t L^{10}_x ([-T,T])}^5 \lesssim_\phi \xi_n^{-3/4} \\
\| E_n^3 \|_{L^1_t L^2_x (|t| \leq \frac{T}{3\xi_n})} &\lesssim \xi_n^{3/4} \tfrac{T}{\xi_n} \| v_n \|_{L^\infty_t L^2_x ([-T,T])}\lesssim_\phi \xi_n^{-1/4}T.
\end{aligned}\end{equation}
Unlike these two terms, $E_n^1$ does not converge to zero in this norm.  Indeed, the simple arguments above show merely $\|E_n^1\|_{L^1_t L^2_x} \lesssim 1$.
Following \cite{Tao:2remarks}, the expedient way to deal with this error term is to alter our approximate solution $\tilde u_n$ on this middle interval;
ultimately we will see that the modification is negligible in all the important norms.

\begin{lemma}[Error-Control, \cite{Tao:2remarks}]\label{L:error-control}
Let $E_n$ be as defined above and let $e_n$ be the solution to the forced Airy equation
\begin{equation*}
\begin{cases}
(\partial_t+\partial_{xxx})e_n =E_n\\
e_n(0)=0.
\end{cases}
\end{equation*}
Then
\begin{equation*}
\lim_{n\to\infty} \Bigl[\|e_n\|_{L_t^\infty L_x^2(|t|\leq \frac{T}{3\xi_n})} + \bigl\||\partial_x|^{1/6}e_n\bigr\|_{L^6_{t,x}(|t|\leq \frac{T}{3\xi_n})}
    + \|e_n\|_{L^5_xL^{10}_t(|t|\leq \frac{T}{3\xi_n})}\Bigr]=0.
\end{equation*}
\end{lemma}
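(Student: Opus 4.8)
The plan is to isolate the single dangerous component of $E_n$, namely $E_n^1$, and to exploit the fact that the harmonics $k=3,5$ surviving in it are \emph{non-resonant} for the Airy flow: their spacetime frequencies sit at distance $\sim\xi_n^3$ from the characteristic surface $\tau=\xi^3$, and dividing by this large quantity produces the required smallness. (The constant $\tfrac5{24}$ in \eqref{eq:NLS} was chosen precisely to kill the resonant harmonic $k=1$.) As a first step I would split $e_n=e_n^\flat+e_n^\sharp$, where $e_n^\flat$ and $e_n^\sharp$ solve the forced Airy equation with vanishing data at $t=0$ and with forcing $E_n^1$ and $E_n^2+E_n^3$, respectively. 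By \eqref{two small errors} we have $\|E_n^2+E_n^3\|_{L^1_tL^2_x(|t|\le T/(3\xi_n))}\to0$ as $n\to\infty$, so Lemma~\ref{L:linear estimates} (with this forcing in the inhomogeneous slot and zero data) shows that the three norms of $e_n^\sharp$ appearing in the statement vanish in the limit. Everything thus reduces to estimating $e_n^\flat$.

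The heart of the matter is to locate the spacetime Fourier support of $E_n^1$. Since $v_n$ has spatial Fourier support in $|\xi|\le\xi_n^{1/4}$ and, by differentiating \eqref{eq:NLS} and using \eqref{reg for vn}, is smooth in time with $\partial_t$-derivatives bounded by fixed powers of $\xi_n$, the quintic product $G:=|v_n|^4v_n$ has spatial Fourier support in $|\xi|\le5\xi_n^{1/4}$ and --- up to a contribution from temporal frequencies $\gtrsim\xi_n^{3/2}$ that is super-polynomially small in $\xi_n$ (hence negligible in every norm below) --- temporal frequencies $\lesssim\xi_n^{3/2}$. Writing out the spacetime Fourier transform of $(t,x)\mapsto e^{ik\xi_nx+ik\xi_n^3t}G(3\xi_nt,x+3\xi_n^2t)$ and changing variables, one finds it supported where $|\xi|\sim\xi_n$ and where $\tau-\xi^3$ is dominated by the term $(k-k^3)\xi_n^3$; since $|k-k^3|\ge24$ for $k\in\{3,5\}$, this gives $\langle\tau-\xi^3\rangle\sim\xi_n^3$ and $\langle\xi\rangle\sim\xi_n$ throughout $\supp\widehat{E_n^1}$ once $n$ is large.

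With this in hand I would run a Bourgain-space argument. In the Airy-adapted space $X^{s,b}$, $\|u\|_{X^{s,b}}:=\|\langle\xi\rangle^s\langle\tau-\xi^3\rangle^b\,\hat u\|_{L^2_{\tau,\xi}}$, with $b$ slightly above $\tfrac12$, the standard energy inequality for the forced Airy equation on $J:=\{|t|\le T/(3\xi_n)\}$ gives $\|e_n^\flat\|_{X^{0,b}(J)}\lesssim_T\|E_n^1\|_{X^{0,b-1}(J)}$ (a smooth time cutoff perturbs $\tau$ only on scale $\xi_n/T$, far below $\xi_n^3$, so it does not disturb the modulation estimate). Because $b-1<0$ and $\langle\tau-\xi^3\rangle\sim\xi_n^3$ on the support,
$$
\|E_n^1\|_{X^{0,b-1}}\sim\xi_n^{3(b-1)}\|E_n^1\|_{L^2_{t,x}(J\times\R)}\lesssim\xi_n^{3(b-1)}\,\xi_n\,(3\xi_n)^{-1/2}\,\|G\|_{L^2_{t,x}([-T,T]\times\R)},
$$
where the last step is a change of variables, and $\|G\|_{L^2_{t,x}([-T,T])}=\|v_n\|_{L^{10}_{t,x}([-T,T])}^5\lesssim_{\phi,T}\xi_n^{1/2}$ follows by interpolating \eqref{vn 66} against $\|v_n\|_{L^\infty_tL^\infty_x}\lesssim_\phi\xi_n^{1/8}$ (Bernstein and \eqref{reg for vn}). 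Hence $\|E_n^1\|_{X^{0,b-1}}\lesssim_{\phi,T}\xi_n^{3b-2}\to0$ as long as $b<\tfrac23$. Since also $b>\tfrac12$, the transference principle upgrades the linear estimates of Lemma~\ref{L:linear estimates} to the embedding $\|u\|_{L^\infty_tL^2_x}+\bigl\||\partial_x|^{1/6}u\bigr\|_{L^6_{t,x}}+\|u\|_{L^5_xL^{10}_t}\lesssim\|u\|_{X^{0,b}}$; applying it to $e_n^\flat$ and adding the contribution of $e_n^\sharp$ from the first step completes the proof.

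The step I expect to be the main obstacle is the Fourier-support analysis: one must check carefully that the frequency spreads generated both by forming the quintic product $|v_n|^4v_n$ and by localizing in time to $J$ are honestly of lower order than $\xi_n^3$, and one must balance the polynomial derivative losses $\xi_n^{s/4}$ coming from \eqref{reg for vn} against the modulation gain $\xi_n^{3(b-1)}$, which is why $b$ must be kept close to $\tfrac12$. One can also bypass $X^{s,b}$ altogether by integrating by parts once in the time variable in the Duhamel integral defining $e_n^\flat$ --- a normal-form substitution --- exploiting that the oscillatory phase has size $\sim\xi_n^3$ while $s\mapsto G(3\xi_ns,\cdot)$ varies only on scale $\xi_n^{-1}$, so that one gains a net factor $\xi_n^{-2}$; the Bourgain-space formulation is merely more economical for controlling all three norms simultaneously.
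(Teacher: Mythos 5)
Your proposal is correct and follows essentially the same route as the paper, which only sketches the argument and refers to \cite[Lemma~6.1]{CCT} and \cite[Lemma~3.1]{Tao:2remarks} for the $X^{s,b}$ estimate: like the paper, you dispose of $E_n^2+E_n^3$ via \eqref{two small errors} and Strichartz, then treat $E_n^1$ by exploiting the non-resonance of the $k=3,5$ harmonics (distance $|k-k^3|\xi_n^3\gtrsim\xi_n^3$ from $\tau=\xi^3$) in a Bourgain-space framework. One small imprecision worth flagging: $v_n$ is \emph{not} literally frequency-localized to $|\xi|\le\xi_n^{1/4}$ for all time --- only its data is, and the NLS flow generates high-frequency tails --- so the spacetime Fourier-support bookkeeping for $E_n^1$ should be phrased in terms of the persistence-of-regularity estimates \eqref{reg for vn} rather than compact support; you partially anticipate this, and it does not affect the outcome.
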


The proof of Lemma~\ref{L:error-control} uses the compactness of the time interval in an essential way.  Indeed, we already see the importance
of this in \eqref{two small errors}.  As noted earlier, it is unavoidable since the norms in which $v_n$ must be estimated are not scale-invariant.

By the Strichartz inequality (Lemma~\ref{L:linear estimates}), the bounds given in \eqref{two small errors} suffice to control
the contributions from $E^2_n$ and $E^3_n$.  Using linearity, one may therefore focus one's attention on $E_n^1$. To handle this
term one uses instead the oscillatory behaviour of the terms $e^{i3\xi_nx+i3\xi_n^3t}$ and $e^{i5\xi_nx+i5\xi_n^3t}$.  Indeed,
the frequencies $(\omega, \xi)=(3\xi_n^3, 3\xi_n)$ and $(\omega, \xi)=(5\xi_n^3, 5\xi_n)$ are far from the cubic $\omega=\xi^3$;
this fact together with $X^{s,b}$-type arguments are used to yield the claim in this case.  For details, see \cite[Lemma~6.1]{CCT}
or \cite[Lemma~3.1]{Tao:2remarks}.

\begin{lemma}[Spacetime bounds for $\tilde u_n$]\label{L:STB for tilde u_n}
For $\tilde u_n$ as above,
\begin{align*}
\bigl\| |\partial_x|^{1/6} \tilde u_n \bigr\|_{L_{t,x}^6(|t|\leq \frac{T}{3\xi_n})}
    + \bigl\|  \tilde u_n \bigr\|_{L_x^5 L_t^{10}(|t|\leq \frac{T}{3\xi_n})} \lesssim_\phi 1.
\end{align*}
\end{lemma}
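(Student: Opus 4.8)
The plan is to convert the two gKdV-critical norms of $\tilde u_n$ on the middle interval $|t|\le\frac T{3\xi_n}$ into $L^2$-critical quantities for the Schr\"odinger solution $v_n$, which are then supplied by \eqref{vn 66} and \eqref{reg for vn}. Recall from \eqref{E:tilde u defn} that on this interval $\tilde u_n(t,x)=\Re\bigl[e^{ix\xi_n+it\xi_n^3}v_n(3\xi_n t,x+3\xi_n^2 t)\bigr]$, so that $|\tilde u_n(t,x)|\le|v_n(3\xi_n t,x+3\xi_n^2 t)|$ pointwise; moreover, since $v_n$ is, up to a rapidly decaying high-frequency tail (controlled at every order by \eqref{reg for vn}), localized to frequencies $|\xi|\lesssim\xi_n^{1/4}$, the function $\tilde u_n$ is localized to frequencies $|\xi|\sim\xi_n$. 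The useful change of variables is $(t,x)\mapsto(s,y)=(3\xi_n t,x+3\xi_n^2 t)$, which rescales the time integral by the factor $(3\xi_n)^{-1}$ and, for each fixed $s$, acts as a translation in $x$ (hence commutes with $|\partial_x|^{\alpha}$ and preserves $x$-Fourier supports).

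For the $L^6_{t,x}$ bound I would first use that $|\partial_x|^{1/6}$ acts on $\tilde u_n$ essentially as multiplication by $\xi_n^{1/6}$: splitting $v_n=P_{\le\xi_n^{1/2}}v_n+P_{>\xi_n^{1/2}}v_n$, the low piece produces an $x$-Fourier multiplier supported where $|\xi|^{1/6}\sim\xi_n^{1/6}$ with symbol bounds of size $\xi_n^{1/6}$, while the high piece is negligible by \eqref{reg for vn} taken with $s$ large. Applying the change of variables then yields
\[
\bigl\||\partial_x|^{1/6}\tilde u_n\bigr\|_{L^6_{t,x}(|t|\le\frac T{3\xi_n})}\lesssim_\phi \xi_n^{1/6}\,(3\xi_n)^{-1/6}\,\|v_n\|_{L^6_{s,y}([-T,T]\times\R)}\lesssim_\phi 1,
\]
by \eqref{vn 66}. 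The gain $\xi_n^{1/6}$ from the derivative is exactly cancelled by the loss $(3\xi_n)^{-1/6}$ from rescaling time; this is the mechanism by which the two equations, which share no common critical norm, are matched on this time scale.

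For the $L^5_xL^{10}_t$ bound I would, exactly as in the proof of Lemma~\ref{L:good approx tail}, interpolate
\[
\|\tilde u_n\|_{L^5_xL^{10}_t(|t|\le\frac T{3\xi_n})}^5\lesssim \bigl\||\partial_x|^{1/6}\tilde u_n\bigr\|_{L^6_{t,x}(|t|\le\frac T{3\xi_n})}^3\,\bigl\||\partial_x|^{-1/4}\tilde u_n\bigr\|_{L^4_xL^\infty_t(|t|\le\frac T{3\xi_n})}^2,
\]
so in view of the previous paragraph it remains to bound the last factor, which is where the real difficulty lies. Here the change of variables is a shear rather than a pure rescaling: for fixed $x$, the supremum over $t$ becomes a supremum of $v_n$ taken along the slanted line $\{(s,x+\xi_n s)\}$. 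The key observation is that this shear is precisely a Galilei boost for \eqref{eq:NLS}: there is a unimodular phase $\theta_n(s,y)$ such that $V_n(s,y):=e^{i\theta_n(s,y)}v_n(s,y+\xi_n s)$ solves \eqref{eq:NLS}, has $\|V_n(s_0)\|_{L^2_x}=\|v_n(s_0)\|_{L^2_x}\le\|\phi\|_{L^2_x}$, and obeys $|v_n(s,x+\xi_n s)|=|V_n(s,x)|$. Hence $\|v_n(3\xi_n t,x+3\xi_n^2 t)\|_{L^4_xL^\infty_t(|t|\le\frac T{3\xi_n})}=\|V_n\|_{L^4_xL^\infty_s([-T,T]\times\R)}$. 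The boost shifts the frequencies of $v_n$ to near $\pm\xi_n/2$, so by the Schr\"odinger maximal function estimate (Lemma~\ref{L:Gulkan}), combined with \eqref{reg for vn} for $\||\partial_x|^{1/4}V_n(s_0)\|_{L^2_x}$ and a fractional Leibniz estimate for the Duhamel term,
\[
\|V_n\|_{L^4_xL^\infty_s([-T,T]\times\R)}\lesssim \bigl\||\partial_x|^{1/4}V_n(s_0)\bigr\|_{L^2_x}+\bigl\||\partial_x|^{1/4}(|V_n|^4V_n)\bigr\|_{L^1_sL^2_x([-T,T])}\lesssim_\phi\xi_n^{1/4}.
\]
Since $|\partial_x|^{-1/4}$ acts on $\tilde u_n$ as multiplication by $\sim\xi_n^{-1/4}$ (the high-frequency tail of $v_n$ being absorbed directly through the application of Lemma~\ref{L:Gulkan} to the full solution $V_n$), this gives $\||\partial_x|^{-1/4}\tilde u_n\|_{L^4_xL^\infty_t(|t|\le\frac T{3\xi_n})}\lesssim_\phi 1$: the $\xi_n^{1/4}$ derivatives forced by the maximal function estimate are exactly the ones the $|\partial_x|^{-1/4}$ weight was there to absorb. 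Plugging this and the $L^6_{t,x}$ bound into the interpolation inequality finishes the proof.

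I expect the main obstacle to be the $L^4_xL^\infty_t$ term: recognizing the shearing change of variables as the Galilei symmetry of \eqref{eq:NLS} (so that the $L^2$-mass is conserved and Lemma~\ref{L:Gulkan} becomes available), and verifying that the number of derivatives lost in the maximal function estimate is precisely $1/4$, matched by the $|\partial_x|^{-1/4}$ weight produced by interpolation. The bookkeeping required to pass from the frequency-truncated statement to the genuine $v_n$ via \eqref{reg for vn} is routine but must be threaded through every estimate.
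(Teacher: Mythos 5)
Your argument is correct in its essentials, but it follows a genuinely different route than the paper.  The paper's proof is a three-line application of the gKdV Strichartz estimate: using $(\partial_t+\partial_{xxx})(\tilde u_n-e_n)=\mu\partial_x(\tilde u_n^5)$, one applies Lemma~\ref{L:linear estimates}, invokes Lemma~\ref{L:error-control} to discard the $e_n$ contributions, and bounds the forcing $\|\partial_x(\tilde u_n^5)\|_{L^1_tL^2_x(|t|\le T/(3\xi_n))}\lesssim(\xi_n+\xi_n^{1/4})\cdot\xi_n^{-1}\|v_n\|^5_{L^5_tL^{10}_x}\lesssim_\phi 1$, the factor $\xi_n^{-1}$ coming from the change of variables $s=3\xi_n t$.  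Your proof avoids $e_n$ and Lemma~\ref{L:error-control} entirely, at the price of estimating the two norms separately and importing Lemma~\ref{L:Gulkan} (which the paper does use, but only later, for the $C^\infty_c$-approximation \eqref{smooth approx}).  The trade-off is that your route requires significantly more bookkeeping and has one point that deserves care: the interpolation $\|u\|_{L^5_xL^{10}_t}\lesssim\||\partial_x|^{1/6}u\|_{L^6_{t,x}}^{3/5}\||\partial_x|^{-1/4}u\|_{L^4_xL^\infty_t}^{2/5}$, which you borrow from Lemma~\ref{L:good approx tail}, is justified there by Stein interpolation applied to a \emph{free Airy evolution}, whereas on the middle interval $\tilde u_n$ is not a free Airy evolution and the inequality is not a bare H\"older inequality for general functions.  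It can be rescued in your setting exactly because $\tilde u_n$ is frequency-localized near $\pm\xi_n$ (so $|\partial_x|^{1/6}$ and $|\partial_x|^{-1/4}$ act as multiplication by $\xi_n^{1/6}$ and $\xi_n^{-1/4}$ up to negligible tails, and then plain H\"older $\|u\|_{L^5_xL^{10}_t}\le\|u\|_{L^6_{t,x}}^{3/5}\|u\|_{L^4_xL^\infty_t}^{2/5}$ suffices), but you should make this explicit rather than cite Lemma~\ref{L:good approx tail} as precedent.  Your recognition of the shear $(s,y)=(3\xi_n t,x+3\xi_n^2t)$ as a Galilei boost of $v_n$, together with Lemma~\ref{L:Gulkan} and the matching of $\xi_n^{1/4}$ against $\xi_n^{-1/4}$, is correct and instructive; it makes visible the power-cancellation that the paper's shorter proof leaves implicit.
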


\begin{proof}
As $ (\partial_t+\partial_{xxx})(\tilde u_n-e_n)=\mu \partial_x(\tilde u_n^5)$, the Strichartz inequality, Lemma~\ref{L:error-control}, and
\eqref{reg for vn} yield
\begin{align*}
\bigl\|   & |\partial_x|^{1/6} \tilde u_n \bigr\|_{L_{t,x}^6(|t|\leq \frac{T}{3\xi_n})}
   + \bigl\|  \tilde u_n \bigr\|_{L_x^5 L_t^{10}(|t|\leq \frac{T}{3\xi_n})}\\
&\lesssim \bigl\| |\partial_x|^{1/6} e_n \bigr\|_{L_{t,x}^6(|t|\leq \frac{T}{3\xi_n})}+ \|e_n\|_{L_x^5 L_t^{10}(|t|\leq \frac{T}{3\xi_n})}
    +\|\tilde u_n(0)\|_{L_x^2} + \bigl\|\partial_x (\tilde u_n^5)\bigr\|_{L_t^1 L_x^2(|t|\leq \frac{T}{3\xi_n})} \\
&\lesssim_\phi 1 + (\xi_n+\xi_n^{1/4})\|v_n (3\xi_nt,x)\|_{L_t^5 L_x^{10}(|t|\leq \frac{T}{3\xi_n})}^5\\
&\lesssim_\phi 1 + \tfrac{\xi_n+\xi_n^{1/4}}{\xi_n}\|v_n\|_{L_t^5 L_x^{10}(\R\times\R)}^5\\
&\lesssim_\phi 1.
\end{align*}
Note that changing variables in the time integral is responsible for the appearance of $\xi_n$ in the denominator on the penultimate line.
\end{proof}

This allows us to prove that $\tilde u_n -e_n$ is an approximate solution to gKdV on the middle time interval.

\begin{lemma}[Good approximation to gKdV -- the middle interval]\label{L:good approx middle}
Let $\tilde u_n$ and $e_n$ be as defined above.  Then $\tilde u_n -e_n$ approximately solves the gKdV equation \eqref{eq:gKdV}
in the sense that
\begin{align*}
\lim_{n\to \infty}\bigl\||\partial_x|^{-1}\bigl\{(\partial_t+ \partial_{xxx})(\tilde u_n-e_n)- \mu \partial_x \bigl[(\tilde u_n-e_n)^5\bigr]\bigr\}\bigr\|_{L_x^1L_t^2(|t|\leq \frac{T}{3\xi_n})}=0.
\end{align*}
\end{lemma}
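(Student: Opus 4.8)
The plan is to exploit the fact that $e_n$ was chosen precisely to absorb the full discrepancy $E_n$ between $\tilde u_n$ and a genuine gKdV solution on the middle interval, so that $\tilde u_n-e_n$ solves \eqref{eq:gKdV} up to a purely quintic error that is then controlled by the smallness of $e_n$.  First I would combine \eqref{E:defn of E_n} with the defining equation $(\partial_t+\partial_{xxx})e_n=E_n$ to obtain, on $|t|\le\frac{T}{3\xi_n}$,
\begin{align*}
(\partial_t+\partial_{xxx})(\tilde u_n-e_n)=\mu\partial_x(\tilde u_n^5),
\end{align*}
and hence
\begin{align*}
(\partial_t+\partial_{xxx})(\tilde u_n-e_n)-\mu\partial_x\bigl[(\tilde u_n-e_n)^5\bigr]=\mu\,\partial_x\bigl[\tilde u_n^5-(\tilde u_n-e_n)^5\bigr].
\end{align*}
As in the proof of Lemma~\ref{L:good approx tail}, the multiplier operator $|\partial_x|^{-1}\partial_x$ is bounded on $L^1_xL^2_t$, so the claim reduces to showing
\begin{align*}
\lim_{n\to\infty}\bigl\|\tilde u_n^5-(\tilde u_n-e_n)^5\bigr\|_{L^1_xL^2_t(|t|\le\frac{T}{3\xi_n})}=0.
\end{align*}

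Next I would expand $\tilde u_n^5-(\tilde u_n-e_n)^5=\sum_{j=1}^{5}c_j\,\tilde u_n^{5-j}e_n^{j}$ with fixed combinatorial constants $c_j$, so that every summand is a product of $5$ factors at least one of which is $e_n$, and estimate each monomial by H\"older's inequality in the mixed norm.  Since the $5$ factors each lie in $L^5_xL^{10}_t$ on the slab $|t|\le\frac{T}{3\xi_n}$, with $5\cdot\tfrac{1}{5}=1$ in the $x$-variable and $5\cdot\tfrac{1}{10}=\tfrac12$ in the $t$-variable, one gets
\begin{align*}
\bigl\|\tilde u_n^{5-j}e_n^{j}\bigr\|_{L^1_xL^2_t(|t|\le\frac{T}{3\xi_n})}\le\|\tilde u_n\|_{L^5_xL^{10}_t(|t|\le\frac{T}{3\xi_n})}^{5-j}\,\|e_n\|_{L^5_xL^{10}_t(|t|\le\frac{T}{3\xi_n})}^{j}
\end{align*}
for each $1\le j\le5$.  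Here Lemma~\ref{L:STB for tilde u_n} bounds the first factor by $O_\phi(1)$, uniformly in $n$, while Lemma~\ref{L:error-control} forces the second factor to $0$ as $n\to\infty$ for each fixed $T$; summing over $1\le j\le5$ then yields the desired limit.

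I do not anticipate a genuine obstacle here: once Lemmas~\ref{L:STB for tilde u_n} and~\ref{L:error-control} are in hand, the argument is essentially bookkeeping.  The two points that require care are the exact cancellation $(\partial_t+\partial_{xxx})(\tilde u_n-e_n)=\mu\partial_x(\tilde u_n^5)$ --- i.e.\ checking that $e_n$ absorbs the \emph{entire} error $E_n=E_n^1+E_n^2+E_n^3$ from \eqref{E:defn of E_n} and not merely part of it --- and the elementary H\"older exponent count above.  If one prefers to avoid invoking boundedness of $|\partial_x|^{-1}\partial_x$ on $L^1_x$, one can instead keep the error in the form $\partial_x F_n$ with $F_n:=\mu\bigl[\tilde u_n^5-(\tilde u_n-e_n)^5\bigr]$ and note that $\|F_n\|_{L^1_xL^2_t(|t|\le\frac{T}{3\xi_n})}\to0$ feeds directly into the $\partial_x$-forced Strichartz estimate of Lemma~\ref{L:linear estimates} wherever this lemma is subsequently applied.
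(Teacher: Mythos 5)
Your proposal is correct and follows the same route as the paper: combine \eqref{E:defn of E_n} with the equation for $e_n$ to reduce to bounding $\bigl\|\tilde u_n^5-(\tilde u_n-e_n)^5\bigr\|_{L^1_xL^2_t}$, then apply H\"older together with Lemmas~\ref{L:STB for tilde u_n} and~\ref{L:error-control}. Your parenthetical remark about $|\partial_x|^{-1}\partial_x$ on $L^1_x$ is well taken --- this operator is the Hilbert transform, which is \emph{not} bounded at the $L^1$ endpoint, so the cleaner reading (used tacitly both here and in Lemma~\ref{L:good approx tail}) is exactly your alternative: keep the error in the divergence form $\partial_x F_n$ and feed $\|F_n\|_{L^1_xL^2_t}$ directly into the Kato smoothing term of Lemma~\ref{L:linear estimates} when Theorem~\ref{T:stab} is invoked.
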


\begin{proof}
A quick computation yields
$$
(\partial_t+ \partial_{xxx})(\tilde u_n-e_n)= \mu\partial_x \bigl[(\tilde u_n-e_n)^5\bigr] + \mu \partial_x \bigl[\tilde u_n^5 -(\tilde u_n-e_n)^5\bigr].
$$
Thus, in order to establish the claim we need to show that
\begin{align*}
\lim_{n\to \infty}\bigl\| \tilde u_n^5 - (\tilde u_n-e_n)^5 \bigr\|_{L_x^1L_t^2(|t|\leq \frac{T}{3\xi_n})}=0.
\end{align*}
This follows easily from Lemmas~\ref{L:error-control} and \ref{L:STB for tilde u_n} and H\"older's inequality.
\end{proof}

The next step toward invoking the perturbation theory is checking the proximity of the initial data.

\begin{lemma}[Agreement with the initial data]\label{L:initial match}
For $\tilde u_n$ as defined above,
\begin{align}\label{E:initial match}
\lim_{n\to \infty} \bigl\| u_n(t_n)-\tilde u_n(t_n)\bigr\|_{L_x^2}=0.
\end{align}
Recall that $u_n(t_n)$ is defined in \eqref{E:initial data}.
\end{lemma}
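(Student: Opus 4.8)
\textbf{Proof proposal for Lemma~\ref{L:initial match}.}

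The plan is to reduce \eqref{E:initial match} to a purely linear comparison and then close it using strong continuity of the free Schr\"odinger group together with the convergence $v_n\to v$ supplied by \eqref{v_n to v}. First I would make two elementary reductions. Since $e^{-t\partial_x^3}$ has a real convolution kernel it commutes with $\Re$, so $u_n(t_n)=\Re\bigl(e^{-t_n\partial_x^3}[e^{ix\xi_n}\phi]\bigr)$; and since $e^{-t\partial_x^3}$ is unitary on $L^2_x$ while, by \eqref{E:tilde u defn}, $\tilde u_n$ on $\{|t|>\tfrac{T}{3\xi_n}\}$ is merely the Airy evolution of its value at the nearer endpoint $\pm\tfrac{T}{3\xi_n}$, one may discard both $\Re$ and a common Airy evolution to obtain
\[
\bigl\|u_n(t_n)-\tilde u_n(t_n)\bigr\|_{L^2_x}\le
\bigl\|e^{-s_n\partial_x^3}[e^{ix\xi_n}\phi]-e^{ix\xi_n+is_n\xi_n^3}\,v_n\bigl(3\xi_n s_n,\,\cdot+3\xi_n^2 s_n\bigr)\bigr\|_{L^2_x},
\]
where $s_n$ is the point of $[-\tfrac{T}{3\xi_n},\tfrac{T}{3\xi_n}]$ nearest $t_n$ (equal to $t_n$ on the middle interval, to $\pm\tfrac{T}{3\xi_n}$ on the tails). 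Next I would record the quantitative form of the ``NLS mimics gKdV'' heuristic on the linear level: expanding $\xi^3$ about $\xi_n$ on the Fourier side gives, for every $f\in L^2_x$ and every $t$,
\[
\bigl\|e^{-t\partial_x^3}[e^{ix\xi_n}f]-e^{ix\xi_n+it\xi_n^3}\bigl(e^{-3i\xi_n t\partial_x^2}f\bigr)(\cdot+3\xi_n^2 t)\bigr\|_{L^2_x}
=\bigl\|\bigl(e^{it\xi^3}-1\bigr)\hat f(\xi)\bigr\|_{L^2_\xi},
\]
where $e^{-is\partial_x^2}$ is the free Schr\"odinger propagator normalised as in the paper. (This is the linearised analogue of the error computation leading to \eqref{E:defn of E_n}.)

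Applying this identity with $f=\phi$, $t=s_n$, and writing $\tau_n:=3\xi_n s_n\in[-T,T]$, I would arrive at
\[
\bigl\|u_n(t_n)-\tilde u_n(t_n)\bigr\|_{L^2_x}\le
\bigl\|\bigl(e^{is_n\xi^3}-1\bigr)\hat\phi(\xi)\bigr\|_{L^2_\xi}+\bigl\|e^{-i\tau_n\partial_x^2}\phi-v_n(\tau_n)\bigr\|_{L^2_x}.
\]
Because $3\xi_n t_n$ converges while $\xi_n\to\infty$, one has $|s_n|\le\tfrac{T}{3\xi_n}\to0$, so the first term vanishes by dominated convergence. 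For the second, let $S:=\lim_n\tau_n$; then $S=T_0$ when $|T_0|<\infty$ (for $T>|T_0|$), while $S=\pm T$ when $T_0=\pm\infty$, the sign being that of $T_0$. In all cases $\tau_n\to S$ with $|S|<\infty$, so strong continuity of $s\mapsto e^{-is\partial_x^2}\phi$, continuity of $v$ in $C^0_tL^2_x$, and $v_n\to v$ in $C^0_tL^2_x$ give
\[
\limsup_{n\to\infty}\bigl\|e^{-i\tau_n\partial_x^2}\phi-v_n(\tau_n)\bigr\|_{L^2_x}\le\bigl\|e^{-iS\partial_x^2}\phi-v(S)\bigr\|_{L^2_x}.
\]
When $|T_0|<\infty$ the right side is $0$, since $v(T_0)=e^{-iT_0\partial_x^2}\phi$ by construction; this settles that case.

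The main obstacle is the regime $T_0=\pm\infty$. There $t_n$ lies in the dispersive ``tail'' of $\tilde u_n$, the truncation pins the comparison to the \emph{fixed} Schr\"odinger time $S=\pm T$, and $v(\pm T)$ is a genuinely nonlinear object, so $\|e^{\mp iT\partial_x^2}\phi-v(\pm T)\|_{L^2_x}$ cannot be expected to vanish for fixed $T$. I would remove it by sending $T\to\infty$: as recorded in the construction of $v$, when $T_0=\pm\infty$ the asymptotic state of $v$ in the $T_0$ direction is exactly $\phi$, so this term tends to $0$ with $T$ by the scattering of $v$ (cf.\ \eqref{vn small tail}). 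Since $T$ is the $n$-independent parameter already slated to be chosen large in the proof of Theorem~\ref{T:NLS emb}, this yields the claim — so the statement is, strictly, $\lim_{T\to\infty}\lim_{n\to\infty}=0$, exactly as for the companion Lemma~\ref{L:good approx tail}.
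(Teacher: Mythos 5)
Your proof is correct and follows essentially the same route as the paper's: both reduce to a Fourier-side comparison after peeling off $\Re$ and a common (unitary) Airy evolution, and both close via the $C^0_tL^2_x$ convergence $v_n\to v$ plus the scattering of $v$ in the $T_0=\pm\infty$ regime. Your packaging via $s_n$ and the stand-alone linear identity $\bigl\|e^{-t\partial_x^3}[e^{ix\xi_n}f]-e^{ix\xi_n+it\xi_n^3}(e^{-3i\xi_n t\partial_x^2}f)(\cdot+3\xi_n^2 t)\bigr\|_{L^2_x}=\|(e^{it\xi^3}-1)\hat f\|_{L^2_\xi}$ is a cleaner, unified presentation of the same computation the paper carries out case-by-case. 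Your concluding remark is also accurate: for $T_0=\pm\infty$ the statement genuinely delivers $\lim_{T\to\infty}\lim_{n\to\infty}=0$ (just as in Lemma~\ref{L:good approx tail}), and the paper's own argument for that case establishes exactly this and no more, the lemma's displayed $\lim_{n\to\infty}$ being slightly imprecise notation for a two-parameter limit in which $T$ is chosen large first.
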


\begin{proof}
We break the proof in two cases depending on whether or not $T_0$ is finite.

Consider first the case $|T_0|<\infty$.  Note that in this case we must necessarily have $t_n\to 0$ as $n\to \infty$.
Requiring $T>|T_0|$ and $n$ sufficiently large, and using the definition of $\tilde u_n$ and Plancherel, we estimate
\begin{align*}
\bigl\| u_n(t_n)-\tilde u_n(t_n)\bigr\|_{L_x^2}
&\leq \bigl\| e^{-t_n\partial_x^3}(e^{ix\xi_n}\phi(x)) - e^{ix\xi_n+it_n\xi_n^3} v_n(3\xi_nt_n, x+3\xi_n^2t_n) \bigr\|_{L_x^2}\\
&= \bigl\| e^{it_n(\xi+\xi_n)^3}\hat \phi(\xi) - e^{it_n\xi_n^3+ 3it_n\xi_n^2\xi } \hat{v}_n(3\xi_nt_n, \xi) \bigr\|_{L_\xi^2}\\
&= \bigl\| e^{it_n\xi^3} e^{3it_n\xi_n\xi^2}\hat \phi(\xi) - \hat{v}_n(3\xi_nt_n, \xi) \bigr\|_{L_\xi^2}.
\end{align*}
Now recall that $3\xi_nt_n\to T_0$ and, by construction, $v(T_0) = e^{-iT_0\partial_x^2}\phi$; these combined with \eqref{v_n to v}
yield the claim the $|T_0|<\infty$ case.

Next, we consider the case $T_0=\infty$; the case $T_0=-\infty$ can be handled identically.  Using the unitarity of $e^{-t\partial_x^3}$
and the calculation above, we obtain
\begin{align*}
\bigl\| u_n(t_n)-\tilde u_n(t_n)\bigr\|_2
&\leq \bigl\| e^{-\frac{T}{3\xi_n}\partial_x^3}(e^{ix\xi_n} \phi) - \tilde u_n\bigl(\tfrac{T}{3\xi_n} \bigr)  \bigr\|_2
=\bigl\| e^{iT\xi^2 + i\frac{T}{3\xi_n} \xi^3}\hat{\phi}(\xi) - \hat{v}_n(T,\xi)\bigr\|_2.
\end{align*}
Recalling the construction of $v$ in this case together with the fact that $\xi_n\to \infty$ by hypothesis,
the dominated convergence theorem combined with \eqref{v_n to v} yield \eqref{E:initial match}.
\end{proof}

We are now in a position to apply the stability result Theorem~\ref{T:stab}.  We begin with the case $|T_0|<\infty$, which implies that
$t_n$ lies in the interval $|t|\leq \frac{T}{3\xi_n}$ for $T$ and $n$ large enough.  In this case, we use $\tilde u_n-e_n$ as our approximate
solution on the time interval $|t|\leq \frac{T}{3\xi_n}$.  By Lemma~\ref{L:good approx middle}, for $n$ sufficiently large
this is an approximate solution to gKdV, while by Lemmas~\ref{L:error-control} and \ref{L:initial match}, we have asymptotic (in $n$)
agreement of the initial data.  Thus we obtain a solution $u_n$ to gKdV on the interval $|t|\leq \frac{T}{3\xi_n}$ which matches the initial
data stated in the theorem and obeys
$$
\lim_{n\to \infty}\Bigl(\|u_n-\tilde u_n\|_{L_t^\infty L_x^2(|t|\leq \frac{T}{3\xi_n})} + \|u_n-\tilde u_n\|_{L_x^5 L_t^{10}(|t|\leq \frac{T}{3\xi_n})}
+\bigl\||\partial_x|^{1/6}(u_n-\tilde u_n)\|_{L_{t,x}^6(|t|\leq \frac{T}{3\xi_n})}\Bigr)=0.
$$
Note that we used Lemma~\ref{L:error-control} to remove $e_n$ from the formula above.  To extend the solution $u_n$ to the whole real line,
we use the formula above together with Lemma~\ref{L:good approx tail} and Theorem~\ref{T:stab}; moreover,
\begin{align}\label{E:un agrees}
\lim_{n\to \infty}\Bigl(\|u_n-\tilde u_n\|_{L_t^\infty L_x^2(\R\times\R)} + \|u_n-\tilde u_n\|_{L_x^5 L_t^{10}(\R\times\R)}
+\bigl\||\partial_x|^{1/6}(u_n-\tilde u_n)\|_{L_{t,x}^6(\R\times\R)}\Bigr)=0.
\end{align}

The argument in the case $T_0=\pm\infty$ is very similar.  One simply treats the three time intervals in a different order.  We still obtain a global
solution $u_n$ to gKdV with satisfies \eqref{E:un agrees}.

We are left with the task of constructing the compactly supported approximation to our solution.  The asymmetry in the space/time
exponents in $L_x^5L_t^{10}$ combined with the boost in \eqref{E:tilde u defn} prevent us from using a simple density argument.

Given $\eps>0$, let $T>0$ and $n$ be sufficiently large so that
$$
\|u_n\|_{L_x^5L_t^{10}(|t|>\frac{T}{3\xi_n})}\leq \eps.
$$
This is possible by virtue of  \eqref{E:un agrees} and the proof of Lemma~\ref{L:good approx tail}.  This allows us to discount the region
$|t|>\frac{T}{3\xi_n}$ from further consideration.  In light of the $L_{t,x}^6$ bounds on $v$, we may choose $\psi_\eps\in C_c^\infty((-T,T)\times\R)$
so that
$$
\|v -\psi_\eps\|_{L_{t,x}^6([-T,T]\times\R)}\leq \eps.
$$
In particular, by \eqref{v_n to v}, for $n$ sufficiently large depending on $\eps$,
\begin{align}\label{v_n-psi}
\|v_n -\psi_\eps\|_{L_{t,x}^6([-T,T]\times\R)}\leq 2\eps.
\end{align}

By the triangle inequality,
\begin{align*}
\bigl\| u_n(t,x) &- \Re \bigl[e^{ix\xi_n+it\xi_n^3} \psi_\eps(3\xi_n t, x+3\xi_n^2 t)\bigr]\bigr\|_{L_x^5 L_t^{10}(|t|\leq \frac{T}{3\xi_n})} \\
&\leq  \bigl\| u_n(t,x) - \tilde u_n(t,x) \bigr\|_{L_x^5 L_t^{10}(|t|\leq \frac{T}{3\xi_n})} \\
&\qquad + \bigl\| v_n(3\xi_n t, x+ 3\xi_n^2 t) - \psi_\eps(3\xi_n t, x+ 3\xi_n^2 t) \bigr\|_{L_x^5 L_t^{10}(|t|\leq \frac{T}{3\xi_n})}.
\end{align*}
The former difference converges to zero by \eqref{E:un agrees}; the latter we estimate using \eqref{v_n-psi} as follows:
\begin{align*}
\bigl\| & v_n(3\xi_n t,x+ 3\xi_n^2 t) - \psi_\eps(3\xi_n t, x+ 3\xi_n^2 t) \bigr\|_{L_x^5 L_t^{10}(|t|\leq \frac{T}{3\xi_n})} \\
&\lesssim \xi_n^{-1/10} \bigl\| v_n(t, x+ \xi_n t) - \psi_\eps(t, x+ \xi_n t) \bigr\|_{L_x^5 L_t^{10}(|t|\leq T)} \\
&\lesssim \xi_n^{-1/10} \bigl\| v_n- \psi_\eps \bigr\|_{L_{t,x}^6(|t|\leq T)}^{3/5}
    \Bigl\{ \bigl\| v_n(t, x+ \xi_n t) \bigr\|_{L_x^4 L_t^{\infty}(|t|\leq T)}^{2/5}
        + \big\| \psi_\eps(t, x+ \xi_n t) \bigr\|_{L_x^4 L_t^\infty(|t|\leq T)}^{2/5} \Bigr\} \\
& \lesssim_\psi \xi_n^{-1/10} \eps^{3/5}  \bigl\{ (\xi_n^{1/4}+\xi_n)^{1/10} + \xi_n^{1/10} \bigr\};
\end{align*}
to obtain the last inequality, we used the fact that $\|\psi_\eps(t, x+ \xi_n t)\|_{L^\infty_t}$ has support of diameter $O(\xi_n)$
and Lemma~\ref{L:Gulkan}.  When using Lemma~\ref{L:Gulkan}, the boost is accounted for by using the Galilei symmetry of the Schr\"odinger equation
and \eqref{reg for vn}.

This completes the proof of Theorem~\ref{T:NLS emb} and with it, the section.
\end{proof}

%%%%%%%%%%%%%%%%%%%%%%%%%%%%%%%%%%%%%%%%%%%%%%%%%%%%%%%%%%%%%%%%%%%%%%%%%%%%%%%%%%%%%%%%%%%
%
%
%                                   Section
%
%
%%%%%%%%%%%%%%%%%%%%%%%%%%%%%%%%%%%%%%%%%%%%%%%%%%%%%%%%%%%%%%%%%%%%%%%%%%%%%%%%%%%%%%%%%%%

\section{Reduction to almost periodic solutions}\label{S:reduct}

In this section we prove Theorem~\ref{T:reduct}, which we will derive as a consequence to the following key proposition,
asserting a certain compactness (modulo symmetries) in sequences of almost blowup solutions with mass converging to the critical mass from below.

\begin{proposition}[Palais--Smale condition modulo symmetries]\label{P:Palais Smale}
Fix $\mu=\pm 1$ and assume Conjecture~\ref{Conj:gKdV} fails for this choice of $\mu$.  Let $M_c$ denote the corresponding critical mass
and assume that Conjecture~\ref{Conj:NLS} holds for initial data with mass $M(v_0)\leq 2M_c$.  Let $u_n: \R\times \R\to \R$ be a
sequence of solutions to \eqref{eq:gKdV} and $t_n$ a sequence of times such that $M(u_n)\nearrow M_c$ and
\begin{equation*}
\lim_{n\to\infty} S_{\ge t_n}(u_n)=\lim_{n\to\infty} S_{\le t_n} (u_n)=\infty.
\end{equation*}
Then the sequence $u_n(t_n)$ has a subsequence which converges in $L^2_x$ modulo the symmetries described in Definition~\ref{D:sym}.
\end{proposition}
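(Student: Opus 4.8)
The plan is to run the by-now-standard concentration-compactness argument (cf.\ \cite{KTV}), with the Airy linear profile decomposition, Lemma~\ref{L:profile decomp}, in the role played there by its Schr\"odinger analogue, and with Theorem~\ref{T:NLS emb} supplying the nonlinear profiles in the highly oscillatory regime. By the time-translation symmetry we may assume $t_n=0$. Applying Lemma~\ref{L:profile decomp} to the $L^2_x$-bounded sequence $\{u_n(0)\}$ and passing to a subsequence, we obtain profiles $\phi^j$ (taken real when $\xi^j_n\equiv 0$, since only $\Re\phi^j$ enters the decomposition then), parameters $g^j_n,\xi^j_n,t^j_n$, and errors $w^J_n$.

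To each index $j$ I attach a \emph{nonlinear profile} $\tilde v^j_n$: a solution to \eqref{eq:gKdV} with $\tilde v^j_n(0)$ asymptotically equal to the $j$-th term of the decomposition. When $\xi^j_n\equiv 0$ this is $T_{g^j_n}$ applied to a time-translate of the maximal-lifespan gKdV solution attached to $\phi^j$ (data $\phi^j$ at time $0$ if $t^j_n\equiv 0$, and otherwise the solution scattering to $e^{-t\partial_x^3}\phi^j$ as $t\to\pm\infty$, furnished by Theorem~\ref{T:local}). When $\xi^j_n\lambda^j_n\to\infty$ it is $T_{g^j_n}$ applied to the global solution produced by Theorem~\ref{T:NLS emb} with frequency parameter $\xi^j_n\lambda^j_n\to\infty$; its hypotheses are met because the mass decoupling \eqref{orthog 2}, combined with $\|\Re(e^{ix\xi^j_n\lambda^j_n}\phi^j)\|_{L^2_x}^2\to\tfrac12\|\phi^j\|_{L^2_x}^2$ (Riemann--Lebesgue), forces $\tfrac12M(\phi^j)\le M_c$, so $M(\phi^j)\le 2M_c$, which in the focusing case is below $2\rootsixfifths M(Q)$ and within the range for which Conjecture~\ref{Conj:NLS} is assumed. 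Writing $m_j$ for the mass of $\tilde v^j_n$ --- equal to $M(\phi^j)$ or $\tfrac12M(\phi^j)$ according to the case --- the decoupling gives $\sum_j m_j+\lim_n\|w^J_n\|_{L^2_x}^2\le M_c$.

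Now split on whether $m_j<M_c$ for all $j$. If so, every nonlinear profile has finite scattering size: for $\xi^j_n\equiv 0$ since $L(m_j)<\infty$ by the definition \eqref{eq:critical-mass} of $M_c$, and for $\xi^j_n\lambda^j_n\to\infty$ by \eqref{STB}. I would then form the approximate solution $\tilde u^J_n:=\sum_{1\le j\le J}\tilde v^j_n+e^{-t\partial_x^3}w^J_n$ and verify, for $J$ large and then $n$ large, that (i) its scattering size is uniformly bounded --- using $S_\R(\tilde v^j_n)\lesssim m_j^{5/2}$ for the profiles of small mass, the finite bound for the finitely many others, asymptotic orthogonality of the nonlinear profiles, and the smallness of $e^{-t\partial_x^3}w^J_n$ in $L^5_xL^{10}_t$ from Remark~\ref{R:wnj small}; (ii) it solves \eqref{eq:gKdV} up to an error small in $\bigl\||\partial_x|^{-1}\cdot\bigr\|_{L^1_xL^2_t}$, the error consisting of the cross terms in $\bigl(\sum_j\tilde v^j_n\bigr)^5-\sum_j(\tilde v^j_n)^5$; and (iii) $\tilde u^J_n(0)$ agrees with $u_n(0)$ in $L^2_x$ up to an asymptotically negligible error. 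The orthogonality in (i) and the cross-term bound in (ii) come from Lemma~\ref{L:decoupling}, applied after approximating each non-oscillatory nonlinear profile by a $C^\infty_c$ function in $L^5_xL^{10}_t$ and each oscillatory one by $\Re[e^{ix\xi^j_n+it(\xi^j_n)^3}\psi_\eps(3\xi^j_n t,x+3(\xi^j_n)^2t)]$ via \eqref{smooth approx} --- these being exactly the ``slow'' and ``fast'' profiles in \eqref{decouple:slow slow}--\eqref{decouple:fast fast}. The stability result Theorem~\ref{T:stab} then forces $S_\R(u_n)<\infty$ for $n$ large, contradicting $\lim_n S_{\ge0}(u_n)=\infty$.

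Hence $m_{j_0}=M_c$ for some $j_0$, which --- as all $m_j>0$ --- leaves a single nontrivial profile $j_0=1$ and $\|w^1_n\|_{L^2_x}\to0$. Were this profile oscillatory we would have $M(\phi^1)=2M_c$, and then Theorem~\ref{T:NLS emb} endows $\tilde v^1_n$ with a \emph{finite} global scattering size; matching the data \eqref{E:initial data}, Theorem~\ref{T:stab} would again give $S_\R(u_n)<\infty$, impossible. So $\xi^1_n\equiv0$ and $u_n(0)=g^1_n[e^{-t^1_n\partial_x^3}\phi^1]+w^1_n$. If $t^1_n\to+\infty$ (resp.\ $-\infty$), the solution attached to $\phi^1$ scatters forward (resp.\ backward), so the scattering size of $\tilde v^1_n$ on $[0,\infty)$ (resp.\ $(-\infty,0]$) tends to $0$, and Theorem~\ref{T:stab} keeps $S_{\ge0}(u_n)$ (resp.\ $S_{\le0}(u_n)$) bounded, contradicting the hypothesis. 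Thus $t^1_n$ is bounded; passing to a further subsequence with $t^1_n\to t_\infty\in\R$ and using strong $L^2_x$-continuity of $e^{-t\partial_x^3}$ together with $\|w^1_n\|_{L^2_x}\to0$, we obtain $(g^1_n)^{-1}u_n(0)\to e^{-t_\infty\partial_x^3}\phi^1$ in $L^2_x$ --- the desired convergence modulo the symmetries of Definition~\ref{D:sym}. I expect the oscillatory regime to be the crux throughout: such profiles correspond to no symmetry of gKdV, so the very existence of their nonlinear profiles and their mutual decoupling rest on Theorem~\ref{T:NLS emb} and Lemma~\ref{L:decoupling}, and the borderline single-oscillatory-profile scenario is precisely what ties the argument to Conjecture~\ref{Conj:NLS} at mass $2M_c$.
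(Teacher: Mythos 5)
Your proposal is correct and follows essentially the same route as the paper: profile decomposition via Lemma~\ref{L:profile decomp}, nonlinear profiles from Theorem~\ref{T:local} (slow frequencies) and Theorem~\ref{T:NLS emb} (fast frequencies, with the mass bound $M(\phi^j)\le 2M_c$ from Riemann--Lebesgue and \eqref{orthog 2} putting you in range of the assumed Conjecture~\ref{Conj:NLS}), decoupling via Lemma~\ref{L:decoupling}, and Theorem~\ref{T:stab} to close the contradiction or extract convergence. The only differences are organizational: you order the dichotomy as ``no profile of critical mass'' first and the single-profile alternative second, whereas the paper does the reverse; and in the single slow profile with $t_n^1\to\pm\infty$ you compare $u_n$ to the (near-scattering) nonlinear profile, while the paper compares directly to the zero solution, using the monotone convergence theorem to make $S_{\ge 0}(e^{-t\partial_x^3}u_n(0))$ small---both are valid applications of stability.
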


\begin{proof}
Using the time-translation symmetry of \eqref{eq:gKdV}, we may set $t_n=0$ for all $n\geq 1$.  Thus,
\begin{equation}\label{blow up in two}
\lim_{n\to \infty} S_{\ge 0} (u_n) = \lim_{n\to \infty} S_{\le 0}(u_n) = \infty.
\end{equation}

Applying Lemma~\ref{L:profile decomp} to the sequence $u_n(0)$ (which is bounded in $L^2_x(\R)$) and passing to a
subsequence if necessary, we obtain the decomposition
\begin{align}\label{decomp}
u_n(0) = \sum_{1\le j\le J} g_n^je^{-t_n^j\partial_x^3}[\Re(e^{ix\xi_n^j\lambda_n^j}\phi^{j})]+ w_n^J.
\end{align}
By \eqref{orthog 2} in Lemma~\ref{L:profile decomp}, we have mass decoupling:
\begin{equation}\label{mass decoupling}
\limsup_{n \to \infty}\sum_{j=1}^{\infty} M[\Re(e^{ix\xi_n^j\lambda_n^j}\phi^j)]\leq \limsup_{n \to \infty} M( u_n(0) )\leq M_c.
\end{equation}
In particular, we have $\sup_{j\geq 1} \limsup_{n \to \infty} M[\Re(e^{ix\xi_n^j\lambda_n^j}\phi^j)] \le M_c$.  The argument now
breaks into two cases depending on whether equality occurs here.

\medskip

{\bf Case I.}  Assume
$$
\sup_{j\geq 1} \limsup_{n \to \infty} M[\Re(e^{ix\xi_n^j\lambda_n^j}\phi^j)] =M_c.
$$
Comparing this with \eqref{mass decoupling}, we see that we must have $\phi^j=0$ for $j\geq 2$, that is, there is only one linear profile
and it carries all the mass.  Consequently, the linear profile decomposition simplifies to
\begin{equation}\label{eq:loc-3}
u_n(0)=g_n e^{-t_n\partial_x^3}[\Re(e^{ix\xi_n\lambda_n}\phi)]+w_n \quad\text{with}\quad \|w_n\|_{L^2_x}\to 0.
\end{equation}
By applying the symmetry operation $T_{g_n^{-1}}$ to $u_n$, which does not affect the hypotheses of Proposition~\ref{P:Palais Smale},
we may take all $g_n$ to be the identity.  Thus, \eqref{eq:loc-3} reduces to
\begin{equation}\label{eq:loc-3'}
u_n(0)= e^{-t_n\partial_x^3}[\Re(e^{ix\xi_n}\phi)] + w_n,
\end{equation}
for some sequence $\{t_n\}_{n\geq 1} \subset \R$, some $\{\xi_n\}_{n\geq 1}\subset [0,\infty)$ such that either $\xi_n\equiv 0$
or $\xi_n\to \infty$, and some $\phi, w_n\in L_x^2$ with $M(w_n) \to 0$ (and hence $S_\R(e^{-t\partial_x^3} w_n) \to 0$) as $n \to \infty$.

\smallskip

{\bf Case I a).} We first consider the case when $\xi_n\to \infty$ as $n\to \infty$.  By passing to a subsequence if necessary,
we may assume that the sequence $\{3t_n\xi_n\}_{n\geq 1}$ converges to some $T_0\in [-\infty, +\infty]$.

A computation reveals that
\begin{align*}
M\bigl[\Re (e^{ix\xi_n}\phi) \bigr]= \tfrac12 M(\phi) + \tfrac 12 \int_\R \Re(e^{2ix\xi_n}\phi(x)^2)\, dx.
\end{align*}
Thus, invoking the Riemann--Lebesgue lemma together with the fact that, by assumption, $\lim_{n\to \infty} M\bigl[\Re (e^{ix\xi_n}\phi) \bigr] = M_c$,
we derive
\begin{align*}
M(\phi)=2M_c.
\end{align*}
This places us in the setting of Theorem~\ref{T:NLS emb}.  Combining this with the fact that $M(w_n)\to 0$ as $n\to \infty$ and the
stability result Theorem~\ref{T:stab}, gives
$$
S_{\R}(u_n)\lesssim_{M_c}1,
$$
thus contradicting \eqref{blow up in two}.

\smallskip

{\bf Case I b).} We are left to consider the case when $\xi_n\equiv 0$.  Thus, \eqref{eq:loc-3'} reduces to
\begin{equation}\label{eq:loc-3''}
u_n(0)= e^{-t_n\partial_x^3}[\Re(\phi)] + w_n,
\end{equation}
for some sequence $\{t_n\}_{n\geq 1} \subset \R$ and some $\phi, w_n\in L_x^2$ with $M(w_n) \to 0$.  By passing to a subsequence if necessary,
we may assume that the sequence $\{t_n\}_{n\geq 1}$ converges to some $T_0\in [-\infty, +\infty]$.  If $T_0\in (-\infty, \infty)$,
then by replacing $\phi$ by $e^{-T_0 \partial_x^3}\phi$, we may assume that $T_0=0$; moreover, absorbing the error
$e^{-t_n \partial_x^3}[\Re(\phi)] - \Re(\phi)$ into the error term $w_n$, we may reduce to $t_n\equiv 0$.  To review, we may assume that either
$t_n\equiv 0$ or $t_n\to\pm \infty$.  We treat these two scenarios separately.

{\bf Case I b1).} Assume $t_n\equiv 0$.  Then \eqref{eq:loc-3''} becomes
$$
u_n(0)= \Re(\phi)+w_n
$$
with $M[\Re(\phi)]=M_c$ and $M(w_n) \to 0$ as $n \to \infty$.  This immediately implies that $u_n(0)$ converges to $\Re(\phi)$ in $L_x^2$,
thus yielding the claim of Proposition~\ref{P:Palais Smale}.

{\bf Case I b2).} Assume $t_n\to \pm \infty$ as $n\to \infty$.  We only present the argument for $t_n\to \infty$;
the case $t_n\to -\infty$ can be treated symmetrically.

By the Strichartz inequality,
$$
S_{\R}\bigl(e^{-t\partial_x^3}[\Re(\phi)]\bigr)<\infty,
$$
and hence, by the monotone convergence theorem,
$$
\lim_{n\to\infty}S_{\ge 0}\bigl(e^{-t\partial_x^3}e^{-t_n\partial_x^3}[\Re(\phi)]\bigr)=0.
$$
Invoking $S_\R(w_n)\to 0$ and \eqref{eq:loc-3''}, we obtain
$$
\lim_{n\to\infty}S_{\ge 0}\bigl(e^{-t\partial_x^3}u_n(0)\bigr)=0.
$$
Applying the stability result Theorem~\ref{T:stab} (using $0$ as the approximate solution and $u_n(0)$ as the initial data), we conclude
$$
\lim_{n\to\infty} S_{\ge 0}(u_n)=0,
$$
which contradicts \eqref{blow up in two}.

\medskip

{\bf Case II.} We  now turn to the case where $u_n$ contains multiple profiles, namely, when
\begin{equation}\label{Palais:Mj}
\sup_{j\geq 1} \limsup_{n \to \infty}  M[\Re(e^{ix\xi_n^j\lambda_n^j}\phi^j)] \leq M_c - \eps \quad \text{for some } \eps>0.
\end{equation}
We will eventually show that this leads to a contradiction.

Reordering the indices in the decomposition \eqref{decomp} if necessary, we may assume that there exists $1\leq J_0\leq J$
such that for each $1\leq j\leq J_0$ we have $\xi_n^j\equiv 0$, while for $J_0<j\leq J$ we have $\xi_n^j\lambda_n^j\to \infty$ as $n\to \infty$.
Note that both the reordering and $J_0$ depend upon $J$.

\medskip

For $1\leq j\leq J_0$ we make the following reductions: First, refining the subsequence once for each $j$ and using a diagonal argument,
we may assume that for each $j$, the sequence $\{\tnj\}_{n\geq 1}$ converges to some $T^j\in [-\infty, \infty]$.  If $T^j\in (-\infty, \infty)$,
then by replacing $\phi^j$ by $e^{-T_j \partial_x^3}\phi^j$, we may assume that $T^j=0$; moreover, absorbing the difference
$e^{-\tnj \partial_x^3}[\Re(\phi^j)] -\Re(\phi^j)$ into the error term $\wnJ$, we may assume that $\tnj\equiv 0$.
Thus, either $\tnj\equiv 0$ or $\tnj\to\pm \infty$.

Continuing with the case $1\leq j\leq J_0$, we define the nonlinear profiles $v^j$ as follows:

\begin{CI}
\item If $\tnj\equiv 0$, then $v^j$ is the maximal-lifespan solution to \eqref{eq:gKdV} with initial data $v^j(0)=\Re(\phi^j)$.
\item If $\tnj\to \infty$, then $v^j$ is the maximal-lifespan solution to \eqref{eq:gKdV} that scatters forward in time to $e^{-t\partial_x^3}\Re(\phi^j)$.
\item If $\tnj\to -\infty$, then $v^j$ is the maximal-lifespan solution to \eqref{eq:gKdV} that scatters backward in time to $e^{-t\partial_x^3}\Re(\phi^j)$.
\end{CI}

By  \eqref{Palais:Mj}, each $v^j$ has mass less than $M_c$; as a consequence it is global and $S_\R(v^j)<\infty$.  Moreover,
combining this with the small data theory (see Theorem~\ref{T:local}) gives
$$
S_\R(v^j)\lesssim M(v^j)^{5/2}=M[\Re (\phi^j)]^{5/2}\lesssim_{M_c} M[\Re (\phi^j)].
$$
Next, for each $1\leq j\leq J_0$ and $n\geq 1$, we introduce $\vnj:\R\times\R\to \R$ defined by
$$
\vnj(t):= T_{\gnj}\bigl[ v^j(\cdot + \tnj)\bigr](t).
$$
Each $\vnj$ is a global solution to \eqref{eq:gKdV} with initial data $\vnj(0)=\gnj v^j(\tnj)$.  Furthermore,
\begin{equation}\label{vnj bounds -1}
S_{\R}(v_n^j)=S_\R(v^j)\lesssim_{M_c} M[\Re (\phi^j)].
\end{equation}

\medskip

Now consider $J_0<j\leq J$.  In this case we make the following reduction: refining the subsequence once for every $j$ and using a diagonal
argument, we may assume that for each $j$, the sequence $\{3\tnj\xi_n^j\lambda_n^j\}_{n\geq 1}$ converges to some $T^j\in [-\infty, \infty]$.

A computation reveals that
\begin{align*}
M\bigl[\Re (e^{ix\xi_n^j\lambda_n^j}\phi^j) \bigr]= \tfrac12 M(\phi^j) + \tfrac 12 \int_\R \Re(e^{2ix\xi_n^j\lambda_n^j}\phi^j(x)^2)\, dx.
\end{align*}
Thus, by \eqref{Palais:Mj} and the Riemann--Lebesgue lemma, $M(\phi^j)<2M_c$.  This places us in the setting of Theorem~\ref{T:NLS emb}.
Hence, for $n$ sufficiently large there exists a global solution $\tilde v_n^j$ to gKdV with initial data
$$
\tilde v_n^j(t_n^j) = e^{-t_n^j\partial_x^3} [\Re(e^{ix\xi_n^j\lambda_n^j}\phi^j)].
$$
Moreover, these solutions obey global spacetime bounds.  Combining this with the small data theory guarantees that, for $n$ sufficiently large,
\begin{align}\label{tilde v bound}
S_{\R}(\tilde v_n^j)\lesssim_{M_c} M[\Re(e^{ix\xi_n^j\lambda_n^j}\phi^j)].
\end{align}
Next, we define the nonlinear profiles $v_n^j:\R\times\R \to \R$ by
$$
\vnj(t):= T_{\gnj}\bigl[ \tilde v_n^j(\cdot + \tnj)\bigr](t).
$$
Invariance of the scattering norm under symmetries shows that \eqref{tilde v bound} can be recast as
\begin{equation}\label{vnj bounds -2}
S_\R(v^j_n)\lesssim_{M_c} M[\Re(e^{ix\xi_n^j\lambda_n^j}\phi^j)]
\end{equation}
for $n$ sufficiently large.

By Lemma~\ref{L:decoupling}, we have decoupling of the nonlinear profiles defined above.  More precisely, due to the
orthogonality conditions in Lemma~\ref{L:profile decomp},
\begin{align}\label{decoupling}
\lim_{n\to \infty} \bigl\| v_n^j v_n^k \bigr\|_{L_x^{5/2} L_t^5(\R\times\R)}=0 , \quad \text{for all} \quad 1\leq j\neq k\leq J.
\end{align}
Note that the three cases discussed in Lemma~\ref{L:decoupling} cover the possible relations between $j$, $k$, and $J_0$.
This decoupling property will allow us to show that $u_n$ may be well approximated by a sum of the $v_n^j$.  To this end,
we define an approximate solution
\begin{equation}\label{approximate-solution-gKdV}
u_n^J(t):=\sum_{1\le j\le J}v_n^j(t)+e^{-t\partial_x^3}w_n^J.
\end{equation}
Next we will show that $u_n^J$ is indeed a good approximation to $u_n$ for $n,J$ sufficiently large.

\begin{lemma}[Asymptotic agreement with initial data]\label{L:M approx}
For any $J\geq 1$ we have
$$ \lim_{n \to \infty} M\bigl( u^J_n(0) - u_n(0) \bigr) = 0.$$
\end{lemma}

\begin{proof}
This follows directly from
\begin{equation}\label{E:match at 0}
u_n(0) - \sum_{j=1}^J v_n^j(0) - w_n^J \longrightarrow 0 \quad\text{in $L^2_x$ as $n\to\infty$,}
\end{equation}
which is a consequence of the way $v_n^j$ were constructed.
\end{proof}

Next we show that $u_n^J$ has finite scattering size for $n,J$ sufficiently large.  Indeed, by Remark~\ref{R:wnj small} combined with
\eqref{decoupling}, \eqref{vnj bounds -1}, \eqref{vnj bounds -2}, and \eqref{orthog 2},
\begin{align}\label{unj finite STB}
\lim_{J\to\infty}\limsup_{n\to\infty} S_\R(u_n^J)
&\lesssim \lim_{J\to\infty}\limsup_{n\to\infty} \ \biggl\{S_\R\Bigl(\sum_{1\le j\le J}v_n^j\Bigr) + S_\R\bigl( e^{-t\partial_x^3}w_n^J\bigr)\biggr\}\nonumber \\
&\lesssim \lim_{J\to\infty}\limsup_{n\to\infty}\sum_{1\le j\le J}S(v_n^j)\nonumber \\
&\lesssim_{M_c} \lim_{J\to\infty} \limsup_{n\to\infty} \sum_{1\le j\le J}M[\Re(e^{ix\xi_n^j\lambda_n^j}\phi^j)]
\lesssim_{M_c}1.
\end{align}

The last step before invoking the stability result Theorem~\ref{T:stab} is to check that $u_n^J$ almost solves the equation.

\begin{lemma}[Asymptotic solution to the equation]\label{L:Eq approx}
We have
$$
\lim_{J \to \infty} \limsup_{n \to \infty}
\bigl\| |\partial_x|^{-1} \bigl[(\partial_t + \partial_{xxx}) u^J_n - \partial_x\bigl((u^J_n)^5\bigr)\bigr] \bigr\|_{L_x^1L^2_t(\R \times \R)} = 0.
$$
\end{lemma}

\begin{proof}
For $J,n\geq 1$,
$$
(\partial_t+\partial_{xxx})u_n^J=\sum_{1\le j\le J}\partial_x\bigl( (v_n^j)^5 \bigr).
$$
Thus it suffices to show that
\begin{equation*}
\lim_{J\to\infty}\limsup_{n\to\infty} \Bigl\|(u_n^J)^5-\sum_{1\le j\le J}(v_n^j)^5\Bigr\|_{L_x^1L^2_t(\R \times \R)}=0,
\end{equation*}
which, by the triangle inequality, reduces to proving
\begin{equation}\label{eq:loc-2}
\lim_{J\to\infty}\limsup_{n\to\infty} \Bigl\|\bigl(u_n^J-e^{-t\partial_x^3}w_n^J\bigr)^5-(u_n^J)^5\Bigr\|_{L_x^1L^2_t(\R \times \R)}=0
\end{equation}
and
\begin{equation}\label{eq:loc-1}
\lim_{J\to\infty}\limsup_{n\to\infty} \Bigl\|\Bigl(\sum_{1\le j\le J} v_n^j\Bigr)^5 - \sum_{1\le j\le J}(v_n^j)^5\Bigr\|_{L_x^1L^2_t(\R \times \R)}=0.
\end{equation}

We first consider \eqref{eq:loc-2}.  This follows from the pointwise inequality
$$
\Bigl|\bigl(u_n^J-e^{-t\partial_x^3}w_n^J\bigr)^5-(u_n^J)^5\Bigr|
\lesssim \bigl|e^{-t\partial_x^3}w_n^J\bigr|^5 + \bigl|e^{-t\partial_x^3}w_n^J\bigr| \bigl|u_n^J\bigr|^4
$$
together with H\"older's inequality, Remark~\ref{R:wnj small}, and \eqref{unj finite STB}.

We now turn to \eqref{eq:loc-1}.  We observe the following pointwise inequality:
\begin{align*}
\Bigl| \Bigl(\sum_{1\le j\le J} v_n^j\Bigr)^5 - \sum_{1\le j\le J}(v_n^j)^5\Bigr|
\leq \sum_{i_1,i_2,i_3=1}^J\ \sum_{1\leq j\neq k\leq J} |v_n^{i_1}| |v_n^{i_2}||v_n^{i_3}| |\vnj v_n^k|.
\end{align*}
By H\"older's inequality combined with \eqref{vnj bounds -1}, \eqref{vnj bounds -2}, and \eqref{decoupling},
we see that this vanishes asymptotically as $n\to \infty$ in $L_x^1L_t^2$.

This proves \eqref{eq:loc-1} and completes the proof of the lemma.
\end{proof}

We are now in a position to apply the stability result Theorem~\ref{T:stab}.  Indeed, using \eqref{unj finite STB}
together with Lemmas~\ref{L:M approx} and \ref{L:Eq approx}, we deduce that for $J$ and $n$ sufficiently large, $u_n^J$ is an approximate
solution to \eqref{eq:gKdV} satisfying the hypotheses of Theorem~\ref{T:stab}.  Thus, for $n$ sufficiently large, we obtain
$$
S_\R(u_n)\lesssim_{M_c}1,
$$
which contradicts \eqref{blow up in two}.  Thus, Case II cannot occur and we have finished the proof of Proposition~\ref{P:Palais Smale}.
\end{proof}

With the Palais--Smale condition modulo symmetries in place, we are now ready to prove Theorem~\ref{T:reduct}.

\begin{proof}[Proof of Theorem~\ref{T:reduct}]
As discussed in the introduction, failure of Conjecture~\ref{Conj:gKdV} implies the existence of a critical mass $M_c$ and
a sequence $u_n: \R\times \R \to \R$ of solutions with $M(u_n) \nearrow M_c$ and $\lim_{n \to \infty} S_\R(u_n) = +\infty$.
Choose $t_n\in \R$ so that $S_{\geq t_n}(u_n)= S_{\leq t_n}(u_n)$.  Then,
\begin{align}\label{hyp 2}
\lim_{n\to \infty} S_{\ge t_n}(u_n)=\lim_{n\to \infty}S_{\le t_n}(u_n)=\infty.
\end{align}
Using the time-translation symmetry of \eqref{eq:gKdV}, we may take all $t_n=0$. Applying Proposition~\ref{P:Palais Smale}, and
passing to a subsequence if necessary, we can locate $u_0 \in L^2_x(\R)$ such that $u_n(0)$ converge in $L^2_x(\R)$ modulo
symmetries to $u_0$; thus, there exist group elements $g_n \in G$ such that $g_n u_n(0)$ converge strongly in $L^2_x(\R)$ to
$u_0$.  Applying the symmetry operation $T_{g_n^{-1}}$ to the solution $u_n$ we may take all $g_n$ to be the identity, and thus
$u_n(0)$ converge strongly in $L^2_x(\R)$ to $u_0$. In particular this implies $M(u_0) \leq M_c$.

Let $u: I \times \R \to \R$ be the maximal-lifespan solution with initial data $u(0) = u_0$ as given by Theorem~\ref{T:local}.
We claim that $u$ blows up both forward and backward in time.  Indeed, if $u$ does not blow up forward in time (say), then by
Theorem~\ref{T:local} we have $[0,+\infty) \subset I$ and $S_{\geq 0}(u) < \infty$.  By Theorem~\ref{T:stab}, this implies for
sufficiently large $n$ that
$$ \limsup_{n \to \infty} S_{\geq 0}( u_n ) < \infty,$$
contradicting \eqref{hyp 2}.  Similarly if $u$ blows up backward in time.  By the definition of $M_c$ this forces $M(u_0) \geq
M_c$, and hence $M(u_0)$ must be exactly $M_c$.

It remains to show that our solution $u$ is almost periodic modulo symmetries.  Consider an arbitrary sequence of times $t'_n\in I$.
Now, since $u$ blows up both forward and backward in time, we have
$$ S_{\geq t'_n}(u) = S_{\leq t'_n}(u) = \infty.$$
Applying Proposition~\ref{P:Palais Smale} once again we see that $u(t'_n)$ has a subsequence which converges modulo symmetries.
Thus, the orbit $\{ u(t): t \in I \}$ is precompact in $L^2_x(\R)$ modulo symmetries.
\end{proof}

%%%%%%%%%%%%%%%%%%%%%%%%%%%%%%%%%%%%%%%%%%%%%%%%%%%%%%%%%%%%%%%%%%%%%%%%%%%%%%%%%%%%%%%%%%%
%
%
%                                   Section
%
%
%%%%%%%%%%%%%%%%%%%%%%%%%%%%%%%%%%%%%%%%%%%%%%%%%%%%%%%%%%%%%%%%%%%%%%%%%%%%%%%%%%%%%%%%%%%

\section{Three enemies}\label{S:enemies}

In this section we outline the proof of Theorem~\ref{T:enemies}.  The argument closely follows \cite[\S4]{KTV}, which may be consulted
for further details.

Let $v:J\times\R\to\R$ denote a minimal-mass blowup solution whose existence (under the hypotheses of Theorem~\ref{T:enemies})
is guaranteed by Theorem~\ref{T:reduct}.  We denote the symmetry parameters of $v$ by $N_v(t)$ and $x_v(t)$. We will construct
our solution $u$ by taking a subsequential limit of various normalizations of $v$:

\begin{definition}
Given $t_0\in J$, we define the \emph{normalization} of $v$ at $t_0$ by
\begin{equation}\label{untn}
v^{[t_0]} := T_{g_{-x_v(t_0)N_v(t_0),N_v(t_0)}}\bigr( v( \cdot + t_0) \bigr).
\end{equation}
This solution is almost periodic modulo symmetries and has symmetry parameters
$$
N_{v^{[t_0]}}(t) = \frac{N_v(t_0+tN_v(t_0)^{-3})}{N_v(t_0)} \text{ and } x_{v^{[t_0]}}(t) =
N_v(t_0)[x_v(t_0+tN_v(t_0)^{-3})-x_v(t_0)].
$$
\end{definition}

Note that by the definition of almost periodicity, any sequence of $t_n\in J$ admits a subsequence so that $v^{[t_n]}(0)$
converges in $L^2_x$.  Furthermore, if $u_0$ denotes this limit and $u:I\times\R\to\R$ denotes the maximal-lifespan solution
with $u(0)=u_0$, then $u$ is almost periodic modulo symmetries with the same compactness modulus function as $v$.  Lastly, Theorem~\ref{T:stab}
shows that $v^{[t_n]}\to u$ in critical spacetime norms (along the subsequence) uniformly on any compact subset of $I$.

As in \cite[Corollary~3.6]{KTV}, $N_u(t)$ has the following local constancy property: there exists a small number $\delta$,
depending on $u$, such that for every $t_0 \in I$ we have
\begin{equation*}
\bigl[t_0 - \delta N(t_0)^{-3}, t_0 + \delta N(t_0)^{-3}\bigr] \subset I
\end{equation*}
and
\begin{equation*}
N(t) \sim_u N(t_0)
\end{equation*}
whenever $|t-t_0| \leq \delta N(t_0)^{-3}$.

Our first goal is to find a soliton-like solution from among the normalizations of $v$ if this is at all possible.  To this end, for any $T
\geq 0$, we define the quantity
\begin{equation}\label{cdef}
\osc(T) := \inf_{t_0 \in J} \,\frac{\sup\, \{ N_v(t) : t \in J \text{ and } |t-t_0| \leq T N_v(t_0)^{-3} \}}
    {\inf\, \{ N_v(t) : t \in J \text{ and } |t-t_0| \leq T N_v(t_0)^{-3} \}},
\end{equation}
which measures the least possible oscillation that one can find in $N_v(t)$ on time intervals of normalized duration $T$.

\medskip

{\bf Case 1:} $\lim_{T \to\infty} \osc(T) < \infty$.  Under this hypothesis, we will be able to extract a soliton-like solution.

Choose $t_n$ so that
$$
\limsup_{n\to\infty} \frac{\sup\, \{ N_v(t) : t \in J \text{ and } |t-t_n| \leq n N_v(t_n)^{-3} \}}
    {\inf\, \{ N_v(t) : t \in J \text{ and } |t-t_n| \leq n N_v(t_n)^{-3} \}} <\infty.
$$
Then a few computations reveal that any subsequential limit $u$ of $v^{[t_n]}$ fulfils the requirements to be classed as a
soliton-like solution in the sense of Theorem~\ref{T:enemies}.  In particular, $u$ is global because an almost periodic (modulo symmetries)
solution cannot blow up in finite time without its frequency scale function converging to infinity.

\medskip

When $\osc(T)$ is unbounded, we must seek a solution belonging to one of the remaining two scenarios. To aid in distinguishing
between them, we consider the quantity
$$
a(t_0) := \frac{ \inf_{t \in J: t \leq t_0} N_v(t) + \inf_{t \in J: t \geq t_0} N_v(t) }{N_v(t_0)}
$$
associated to each $t_0 \in J$.  This measures the extent to which $N_v(t)$ decays to zero on both sides of $t_0$. Clearly, this
quantity takes values in the interval $[0,2]$.

First we treat the case where $a(t_0)$ can be arbitrarily small.  As we will see, this will lead to a double cascade.

\medskip

{\bf Case 2:} $\lim_{T\to \infty} \osc(T) = \infty$ and $\inf_{t_0 \in J} a(t_0) = 0$.  From the behavior of $a(t_0)$ we may
choose sequences $t_n^{-}<t_n<t_n^{+}$ from $J$ so that $a(t_n)\to 0$, $N_v(t_n^{-})/N_v(t_n)\to 0$, and
$N_v(t_n^{+})/N_v(t_n)\to 0$.  Next we choose times $t_n'\in(t_n^{-},t_n^{+})$ so that
\begin{align}\label{N from below}
N_v(t_n') \geq \tfrac12 \sup\, \{ N_v(t) : t\in [t_n^{-},t_n^{+}] \}.
\end{align}
In particular, $N_v(t_n') \geq \frac 12N_v(t_n)$, which allows us to deduce that
\begin{align}\label{N to infty}
\frac{ N_v(t_n^{-}) }{ N_v(t_n') }\to 0 \quad\text{and}\quad \frac{ N_v(t_n^{+}) }{ N_v(t_n') } \to 0.
\end{align}

Now consider the normalizations $v^{[t_n']}$ and let $s_n^\pm := (t_n^\pm - t_n') N_v(t_n')^3$.  From \eqref{N from below} and
\eqref{N to infty} we see that
$$
N_{v^{[t_n']}}(s)\lesssim 1 \text{ for } s\in [s_n^-,s_n^+] \qquad \text{and} \qquad N_{v^{[t_n']}}(s_n^\pm)\to 0 \text{ as } n\to \infty.
$$
Passing to a subsequence if necessary, we obtain that $v^{[t'_n]}$ converge locally uniformly to a maximal-lifespan solution $u$
of mass $M(v)$ defined on an open interval $I$ containing $0$, which is almost periodic modulo symmetries.  Now $s_n^\pm$ must
converge to the endpoints of the interval $I$, which implies that $N_u(t)$ is bounded above on $I$ and thus, $u$ is global.
Rescaling $u$ slightly, we may ensure that $N_u(t)\leq 1$ for all $t\in\R$.

From the fact that $\osc(T)\to\infty$, we see that $N_v(t)$ must show significant oscillation in neighborhoods of $t_n'$.
Transferring this information to $u$ and using the upper bound on $N_u(t)$, we may conclude that $\liminf_{t\to-\infty} N_u(t) =
\liminf_{t\to\infty} N_u(t) =0$.  Thus we obtain a double high-to-low frequency cascade in the sense of Theorem~\ref{T:enemies}.

\medskip

Finally, we treat the case when $a(t)$ is strictly positive; we will construct a self-similar solution.

\medskip

{\bf Case 3:} $\lim_{T\to \infty} \osc(T) = \infty$ and $\inf_{t_0 \in J} a(t_0) = 2\eps >0$.  Let us call a $t_0\in J$
\emph{future-focusing} if $N_v(t)\geq\eps N_v(t_0)$ for all $t\geq t_0$; we call $t_0$ \emph{past-focusing} if $N_v(t)\geq\eps N_v(t_0)$
for all $t\leq t_0$.  Note that by hypothesis, every $t_0\in J$ is future-focusing, past-focusing, or possibly both.

Next we argue that either all sufficiently late times are future-focusing or all sufficiently early times are past-focusing.  If
this were not the case, one would be able to find arbitrarily long time intervals beginning with a future-focusing time and
ending with a past-focusing time.  The existence of such intervals would contradict the divergence of $\osc(T)$.
We restrict our attention to the case where all $t\geq t_0$ are future-focusing; the case when all sufficiently early times are past-focusing
can be treated symmetrically.

Choose $T$ so that $\osc(T) > 2\eps^{-1}$. We will now recursively construct an increasing sequence of times
$\{t_n\}_{n=0}^\infty$ so that
\begin{align}\label{t_n props}
0 < t_{n+1} - t_n \leq 2\eps^{-3} T N_v(t_n)^{-3} \quad\text{ and }\quad N_v(t_{n+1}) \geq 2 N_v(t_n).
\end{align}
Given $t_n$, set $t_n':=t_n + \eps^{-3}T N_v(t_n)^{-3}$.  Then
$$
J_n:=[t_n' - T N_v(t_n')^{-3},t_n' + T N_v(t_n')^{-3}] \subseteq [t_n,t_n+2\eps^{-3}T N_v(t_n)^{-3}].
$$
As $t_n$ is future-focusing, this allows us to conclude that $N_v(t)\geq \eps N_v(t_n)$ on $J_n$, but then by the way $T$ is chosen,
we may find $t_{n+1}\in J_n$ so that $N_v(t_{n+1})\geq 2 N_v(t_n)$.

Having obtained a sequence of times obeying \eqref{t_n props}, we may conclude that $t_n$ converge to a limit and $N_v(t_n)$ to
infinity.  Hence $\sup J$ is finite and $\lim_{n \to \infty} t_n =\sup J$.  Moreover, elementary manipulations using
\eqref{t_n props} and the local constancy property also yield
$$
\sup J -t \sim_v N_v(t)^{-3} \quad \text{for all} \quad t_0 \leq t < \sup J.
$$
Enlarging the compactness modulus function by a bounded amount, we may redefine
$$
N_v(t) = (\sup J-t)^{-1/3} \quad \text{for all} \quad t_0 \leq t < \sup J.
$$

Now consider the normalizations $v^{[t_n]}$.  After passing to a subsequence if necessary, $v^{[t_n]}$ converge locally
uniformly to a maximal-lifespan solution $u$ of mass $M(v)$ defined on an open interval $I$ containing $(-\infty,1)$, which is
almost periodic modulo symmetries.  Moreover, the frequency scale function of $u$ obeys
$$
N_u(s) \sim_v (1-s)^{-1/3} \quad \text{for all} \quad s\in(-\infty,1).
$$
Rescaling $u$ and applying a time translation (by $-1$) followed by a space/time reversal, we obtain our sought-after self-similar
solution.

This completes the proof of Theorem~\ref{T:enemies}. \qed

%%%%%%%%%%%%%%%%%%%%%%%%%%%%%%%%%%%%%%%%%%%%%%%%%%%%%%%%%%%%%%%%%%%%%%%%%%%%%%%%%%%%%%%%%%%
%
%
%                                   Section
%
%
%%%%%%%%%%%%%%%%%%%%%%%%%%%%%%%%%%%%%%%%%%%%%%%%%%%%%%%%%%%%%%%%%%%%%%%%%%%%%%%%%%%%%%%%%%%

\end{document}